\newcommand{\C}{{\mathbb C} }
\newcommand{\R}{{\mathbb R} }
\newcommand{\cD}{{\mathcal D} }
\newcommand{\cE}{{\mathcal E} }
\newcommand{\cL}{{\mathcal L} }
\newcommand{\cM}{{\mathcal M} }
\newcommand{\cN}{{\mathcal N} }
\newcommand{\cO}{{\mathcal O} }
\newcommand{\cT}{{\mathcal T} }
\newcommand{\cX}{{\mathcal X} }
\newcommand{\cZ}{{\mathcal Z} }
\newcommand{\cH}{{\mathcal H} }
\newcommand{\cK}{{\mathcal K} }
\newcommand{\wh}{\widehat}
\newcommand{\wt}{\widetilde}
\newcommand{\pt}{\partial}
\def\ol#1{{\overline{#1}}}
\newtheorem{theorem}{Theorem}
\newtheorem{definition}{Definition}
\newtheorem{lemma}{Lemma}
\newtheorem{proposition}{Proposition}
\newtheorem{corollary}{Corollary}
\newtheorem*{Maintheorem*}{Main Theorem}
\newtheorem{maintheorem}{Theorem}
\newtheorem*{theorem*}{Theorem}
\def\ke{K{\"a}h\-ler-Ein\-stein }
\def\ks{Ko\-dai\-ra-Spen\-cer }
\def\ka{K{\"a}h\-ler }
\def\wp{Weil-Pe\-ters\-son }
\def\tei{Teich\-mül\-ler }
\def\fs{Fubini-Study }
\def\ii{\sqrt{-1}}
\def\ddb{\sqrt{-1}\partial\overline{\partial}}
\def\C{\mathbb{C}}
\def\cinf{C^\infty}
\def\gab{{g_{\alpha\ol\beta}}}
\def\db{{\ol\partial}}
\def\psh{plurisubharmonic}
\begin{document}

\title[Positivity of $K_{\cX/S}$]{Positivity of relative canonical bundles
for families of canonically polarized manifolds}

\author{Georg Schumacher}
\address{Fachbereich Mathematik und Informatik,
Philipps-Universit\"at Marburg, Lahnberge, Hans-Meerwein-Strasse, D-35032
Marburg,Germany}
\email{schumac@mathematik.uni-marburg.de}
\date{}
\maketitle
\begin{abstract}
Given an effectively parameterized family $f:\cX \to S$ of canonically
polarized manifolds the \ke metrics on the fibers induce a hermitian
metric on the relative canonical bundle $\cK_{\cX/S}$.

We use a global elliptic equation to show that this metric is strictly
positive. For degenerating families we show that the curvature form on the
total space can be extended as a (semi-)positive closed current. By fiber
integration it is shown that the generalized \wp form on the base
possesses an extension as a positive current. In in this situation, the
determinant line bundle associated to the relative canonical bundle on the
total space can be extended. As an application the quasi-projectivity of
the moduli space $\cM_{\text{can}}$ of canonically polarized varieties
follows.
\end{abstract}

\tableofcontents

\section{Introduction}

For any holomorphic family $f: \cX \to S$ of canonically polarized,
complex manifolds, the unique \ke metrics on the fibers define an
intrinsic metric on the relative canonical bundle $\cK_{\cX/S}$. The
construction is functorial in the sense of compatibility with base
changes. By definition, its curvature form has at least as many positive
eigenvalues as the dimension of the fibers indicates.

\begin{Maintheorem*}\label{th:main1}
Let $\cX \to S$ be a holomorphic family of canonically polarized, compact,
complex manifolds, which is nowhere inifinitesimally trivial. Then the
hermitian metric on $\cK_{\cX/S}$ induced by the \ke metrics on the fibers
is strictly positive.
\end{Maintheorem*}

Actually the first variation of the metric tensor in a family of compact
\ke manifolds contains the information about the induced deformation, more
precisely, it contains the harmonic representatives $A_s=
A^\alpha_{s\ol\beta}\pt_\alpha dz^\ol\beta $ of the \ks classes
$\rho(\pt/\pt s)$. The positivity of the hermitian metric will be measured
in terms of a certain global function. Essential is an elliptic equation
on the fibers, which relates this function to the pointwise norm of the
harmonic \ks forms. The ''strict'' positivity of the corresponding
(fiberwise) operator $(\Box + id)^{-1}$, where $\Box$ is the complex
Laplacian, is shown in a direct way.  For families of compact Riemann
surfaces the elliptic equation was previously derived in terms of
automorphic forms by Wolpert \cite{wo}. Later in higher dimensions a
similar equation arose in the work of Siu \cite{siu:canlift} for families
of canonical polarized manifolds.

The positivity of the relative canonical bundle and the methods involved
is closely related to different questions. We will treat the construction
of a positive line bundle on the moduli space of canonically polarized
manifolds proving its quasi-projectivity and the question about its
hyperbolicity.

\begin{maintheorem}\label{th:main2.0}
Let $(\cK_{\cX/\cH},h)$ be the relative canonical bundle on the total
space over the Hilbert scheme, equipped with the hermitian metric induced
by the \ke metrics on the fibers. Then the curvature form extends to the
total space over the compact Hilbert scheme $\ol\cH$ as a positive, closed
current $\omega^{KE}_{\ol \cX}$ with at most analytic singularities.
\end{maintheorem}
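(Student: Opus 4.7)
The plan is to extend the smooth positive $(1,1)$-form $\omega^{KE}_\cX = -\ddb\log h$, produced by the Main Theorem on the smooth universal family $\cX\to\cH$, to a positive closed current on the compactified total space $\ol\cX$. I would work locally around a point of $\ol\cX\setminus\cX$, choose a local \psh potential $\psi$ for $\omega^{KE}_\cX$ on the smooth locus, and aim to extend $\psi$ across the boundary as a \psh function with analytic singularities. This local-potential strategy is natural because $h$ itself is of the form $e^{-\psi}$ in a local trivialization, so the extension of the current reduces to an extension problem for a quasi-\psh function.

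Fix a smooth reference hermitian metric $h_0$ on the relative dualizing sheaf $\cK_{\ol\cX/\ol\cH}$, with Chern curvature form $\omega_0$. Writing $h=h_0\, e^{-\varphi}$ on the smooth locus, the \ke condition $\text{Ric}(\omega^{KE}_s)=-\omega^{KE}_s$ becomes, on each smooth fiber, a complex Monge-Amp\`ere equation
\[
(\omega_0|_{X_s} + \ddb\varphi_s)^n = e^{\varphi_s + F_s}\,\omega_0^n|_{X_s},
\]
where $F_s$ is a smooth function on each smooth fiber determined by $h_0$ and the Ricci form of $\omega_0|_{X_s}$. The classical maximum-principle argument underlying Yau's $C^0$-estimate in the negative-curvature regime gives the simple bound $\|\varphi_s\|_{L^\infty(X_s)}\le\|F_s\|_{L^\infty(X_s)}$. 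A separate analysis of $F_s$ along the discriminant locus is then needed to show that $F_s$ blows up at worst logarithmically, bounded above by $c\log|f|^{-2}+O(1)$ where $f$ is a local equation of the discriminant pulled back to $\ol\cX$ and $c>0$.

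Combining these estimates, $\varphi - c\log|f|^2$ is uniformly bounded above in a neighborhood of the boundary. The standard Hartogs-type extension theorem for \psh functions bounded above across proper analytic subsets then extends $\varphi-c\log|f|^2$ via upper semicontinuous regularization; adding back $c\log|f|^2$ yields the extension of $\varphi$, and hence of $\psi$, as a \psh function with analytic singularities along the pre-image of the discriminant. Patching these local extensions produces the desired positive closed current $\omega^{KE}_{\ol\cX}$ on $\ol\cX$ with at most analytic singularities, whose restriction to $\cX$ agrees with the smooth form from the Main Theorem.

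The hard part will be establishing the uniform logarithmic upper bound on $F_s$. This requires a careful choice of reference metric $h_0$ extending as a (possibly singular) hermitian metric to $\ol\cX$, together with a precise analysis of how the relative canonical volume form degenerates along singular fibers. In full generality one likely has to pass to a resolution of $\ol\cX$ where the degenerate fibers become simple normal crossings divisors; this is precisely the source of the ``at most analytic'' qualifier appearing in the conclusion, since the singularities of $F_s$ then have explicit log-type expressions in the local coordinates on the resolution.
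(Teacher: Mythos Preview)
Your strategy matches the paper's almost exactly: write the K\"ahler--Einstein metric as a reference metric plus a potential, invoke Yau's upper $C^0$-estimate on the Monge--Amp\`ere equation to bound the potential from above, deduce that a log-corrected potential is bounded above and hence extends as a \psh\ function, and pass to a resolution so the boundary is a normal crossings divisor. The one substantive point the paper handles more cleanly is your acknowledged ``hard part'': rather than an arbitrary smooth $h_0$, the paper takes as reference the metric induced by the $m$-canonical embedding $\Phi:\cX\hookrightarrow\mathbb P_N$ and the Fubini--Study metric on $\cO_{\mathbb P_N}(1)$, so that both the reference volume form $\Omega^0$ and the function $F=\log(\Omega^0/(\omega^0)^n)$ are built out of polynomials. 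The logarithmic upper bounds $|\sigma|^{2k}e^{-F}\le C$ and $|\sigma|^{2\ell}\Omega^0\le C$ then fall out essentially for free, with no delicate analysis needed. This also sidesteps the issue of whether $\cK_{\ol\cX/\ol\cH}$ is a line bundle over singular fibers, since $\Phi^*\cO_{\mathbb P_N}(1)$ manifestly is. Finally, the paper does not ``patch'' local extensions directly: it proves a separate uniqueness statement for extensions of positive currents whose Lelong numbers vanish on the smooth locus (via Siu's decomposition), which is what lets the local extensions glue. Your proposal is correct in outline; these are the two places where the paper's execution is sharper than what you sketched.
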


The proof depends on Yau's $C^0$-estimates previously used for the
construction of \ke metrics.
\begin{maintheorem}\label{th:main2}
Let $Y$ be a normal space and $Y' \subset Y$ the complement of a closed
analytic, nowhere dense subset. Let $L'$ be a holomorphic line bundle on
$Y'$ together with a (semi-)positive hermitian metric $h'$, which also may
be singular. Assume that the curvature current can be extended to $Y$ as a
positive, closed current $\omega$ and that the line bundle $L'$ possesses
\underline{local} holomorphic extensions to $Y$. Then there exists a
holomorphic line bundle $(L,h)$ with a singular, positive hermitian
metric, whose restriction to $Y'$ is isomorphic to $(L',h')$. The metric
$h$ can be chosen with at most analytic singularities, if $\omega$ has
this property.
\end{maintheorem}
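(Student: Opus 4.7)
The plan is to cover $Y$ by Stein opens on which $L'$ extends and the current $\omega$ has a psh potential, normalize these local extensions so that the given metric agrees with the reference potential, and finally glue the resulting transition data across $A := Y \setminus Y'$ using the Riemann extension theorem on normal spaces. Concretely, choose Stein $U_i$ so small that (a) $L'|_{U_i \cap Y'}$ extends to a line bundle $L_i$ on $U_i$ with a holomorphic frame $e_i$, and (b) $\omega|_{U_i} = \ddb \rho_i$ for some psh $\rho_i$ on $U_i$. Fixing isomorphisms $L_i|_{U_i \cap Y'} \cong L'|_{U_i \cap Y'}$ and writing $h'(e_i,e_i) = e^{-\varphi_i}$, the functions $\varphi_i$ are psh on $U_i \cap Y'$ with $\ddb \varphi_i = \omega$, and the transition functions satisfy $\log|g_{ij}|^2 = \varphi_i - \varphi_j$ on $U_{ij} \cap Y'$.

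The heart of the matter is to adjust each pair $(L_i, e_i)$ so that $\varphi_i \equiv \rho_i$ on $U_i \cap Y'$. Since $\ddb(\varphi_i - \rho_i) = 0$, the difference is pluriharmonic on $U_i \cap Y'$. The freedom to replace the chosen local extension $L_i$ by $L_i \otimes \cO(D)$ with $D$ a Weil divisor supported on $A \cap U_i$ changes $\varphi_i$ by $\log|f|^2$ for a local equation $f$ of $D$; we use this to absorb any divisorial (Lelong-number) contribution of $\varphi_i$ along components of $A$. What remains of $\varphi_i - \rho_i$ is locally bounded near $A$, so on a contractible $U_i$ the conjugate holomorphic function extends by the Riemann extension theorem on the normal space $Y$. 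A final reparametrization $e_i \mapsto e^{g_i} e_i$, with $2\,\mathrm{Re}(g_i)$ equal to the pluriharmonic extension of $\varphi_i - \rho_i$, achieves $\varphi_i = \rho_i$.

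With this normalization, $\log|g_{ij}|^2 = \rho_i - \rho_j$ on $U_{ij} \cap Y'$, and the right-hand side is pluriharmonic on $U_{ij}$ as a difference of psh potentials for the same current $\omega$, hence locally bounded. Consequently $|g_{ij}|$ is locally bounded above and away from zero, and Riemann extension on the normal variety $Y$ yields a nowhere-zero holomorphic $g_{ij}$ on $U_{ij}$. The cocycle identity persists by density, so the $\{L_i\}$ glue to a global line bundle $L$ on $Y$ with $L|_{Y'} \cong L'$. Defining $h(e_i, e_i) := e^{-\rho_i}$ in each chart produces a singular hermitian metric $h$ on $L$ with curvature $\omega$ whose restriction to $Y'$ recovers $h'$; if $\omega$ has at most analytic singularities then the $\rho_i$ can be chosen with analytic singularities, and $h$ inherits this.

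The principal obstacle is the normalization in the second paragraph when $A$ has codimension-one components, since pluriharmonic functions do not extend across a divisor without boundedness and arbitrary local extensions could produce unbounded discrepancies $\varphi_i - \rho_i$. Both hypotheses of the theorem enter decisively here: the divisorial twisting freedom provided by the existence of local holomorphic extensions of $L'$ absorbs the bad (Lelong) behavior of $\varphi_i$, while the existence of the positive closed extension $\omega$ furnishes well-behaved reference potentials $\rho_i$ across $A$. Once these two inputs are combined to reduce to a bounded pluriharmonic discrepancy, the remainder of the argument (extension of $g_{ij}$ and gluing) is formal.
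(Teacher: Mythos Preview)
Your normalization step contains a genuine gap. You claim that twisting $L_i$ by an integral divisor supported on $A$ reduces the pluriharmonic discrepancy $\varphi_i-\rho_i$ to something locally bounded near $A$. This is false in general. Near a smooth point of $A\cap U_i=\{z_i=0\}$ every pluriharmonic function on $U_i\setminus A$ has the form $\beta_i\log|z_i|+2\,\mathrm{Re}(f'_i)$ with $\beta_i\in\R$ and $f'_i$ holomorphic on $U'_i$. A twist $L_i\mapsto L_i\otimes\cO(k_iA)$ only shifts $\beta_i$ by an even integer, so the fractional residue $\gamma_i=\beta_i-2k_i\in[0,2)$ survives. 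If $\gamma_i\neq 0$ the remaining discrepancy $\gamma_i\log|z_i|+2\,\mathrm{Re}(f'_i)$ is unbounded, and your subsequent step (writing it as $2\,\mathrm{Re}(g_i)$ with $g_i$ extending across $A$) breaks down: the required gauge factor would be $z_i^{\gamma_i/2}$, which is not single-valued.

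A one-line example: $Y=\C$, $A=\{0\}$, $L'=\cO_{Y'}$ with $h'(1,1)=|z|^{-1}$, so $\varphi=\log|z|$. The curvature of $h'$ on $Y'$ vanishes, so $\omega=0$ and $\rho=0$ are admissible. Then $\varphi-\rho=\log|z|$ has $\beta=1$, and no integer twist makes it bounded. The correct extension $(L,h)=(\cO_\C,\,|z|^{-1})$ exists, but its curvature is a multiple of the Dirac current $[0]$, \emph{not} $\omega=0$.

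The paper's proof anticipates exactly this obstruction. It does not try to force the new potential to equal a potential for $\omega$; instead it absorbs the fractional residue into the \emph{metric} by setting $\psi_i=\psi^0_i+\gamma_i\log|z_i|$ (still psh), and then proves the key consistency $\gamma_i=\gamma_j$ along each component of $A$ from the observation that the transformed transition functions $\wt g'_{ij}$ are holomorphic on $U'_{ij}$ while $|\wt g'_{ij}|^2=|z_j|^{\gamma_i-\gamma_j}\cdot(\text{smooth, nowhere zero})$ with $|\gamma_i-\gamma_j|<2$. The resulting metric has curvature $\omega$ plus a non-negative combination of the currents $[A_k]$, which is why the paper explicitly remarks that ``the form $\omega$ need not be the curvature form of $h$.'' Your scheme, by setting $h(e_i,e_i)=e^{-\rho_i}$, would force the extended curvature to be $\omega$ itself; the example shows this is impossible in general, so the argument as written cannot be completed without incorporating the fractional $\gamma_i$-correction.
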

This theorem is first applied to the total space of the universal family
$\cX \to \cH$ over the Hilbert scheme. Next we consider a certain
determinant line bundle of the relative canonical bundle, which is defined
over the open part of the Hilbert scheme. It carries a Quillen metric, and
its curvature form is the generalized \wp form. The latter is equal to the
fiber integral $\int_{\cX/\cH} c_1(\cK_{\cX/\cH},h)^{n+1}$, where $n$ is
the fiber dimension. These quantities descend to the moduli space of
canonically polarized manifolds. Again we see that the curvature form
extends as a closed, positive current.

Concerning the descent to the moduli space, we can say that so far both
the \wp form and the determinant line bundle descend to the moduli space
and possess local extension to a compactification of the moduli space (we
use the fact that the moduli space is an algebraic space). The existence
of a positive \wp current on the compactified moduli space is shown by
means of Siu's decomposition theorem for positive currents, and the
extension of the line bundle follows from the above
Theorem~\ref{th:main2}.

\begin{maintheorem}
The generalized \wp form on the moduli stack of canonically polarized
varieties is strictly positive. It is the Chern form of a determinant line
bundle, equipped with a Quillen metric. A tensor power of the line bundle
extends to a compactification, and the Quillen metric extends as a
(semi-)positive singular hermitian metric with at most analytic
singularities (in the orbifold sense).
\end{maintheorem}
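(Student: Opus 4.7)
The plan is to combine the three previously stated theorems with the Bismut-Gillet-Soul\'e description of Quillen metrics, and then to push everything from the Hilbert scheme down to the moduli stack.

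First, I would establish \emph{strict positivity on the open moduli stack}. By the Main Theorem the induced hermitian metric $h$ on $\cK_{\cX/\cH}$ is strictly positive along every direction coming from a non-trivial infinitesimal deformation; this is the content of the fiberwise elliptic equation. Taking the fiber integral
\[
\omega^{WP} = \int_{\cX/\cH} c_1(\cK_{\cX/\cH},h)^{n+1}
\]
and pairing with a \ks class $\rho(\partial/\partial s)$, the integrand is pointwise non-negative and strictly positive wherever the harmonic representative $A_s$ does not vanish. Since the family over $\cH$ is effectively parameterized at every point, $\omega^{WP}$ is strictly positive on $\cH$, and by $PGL$-invariance of both the construction and of $h$ it descends to a strictly positive form on the moduli stack $\cM_{\text{can}}$.

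Second, I would identify $\omega^{WP}$ with a Chern form of a determinant line bundle. Let $\lambda = \det Rf_*(\cK^{\otimes \nu}_{\cX/\cH})$ for $\nu \gg 0$ (so that $\cK^{\otimes \nu}_{\cX/\cH}$ is $f$-very ample) and equip it with the Quillen metric $h^Q$ built from $h$ and the $L^2$-metric on direct images. The Bismut-Gillet-Soul\'e curvature formula gives
\[
c_1(\lambda, h^Q) = -\int_{\cX/\cH} \mathrm{Td}(T_{\cX/\cH})\,\mathrm{ch}(\cK^{\otimes\nu}_{\cX/\cH}),
\]
and an explicit computation of the degree-one part shows that a suitable integer linear combination of such determinants has Chern form equal to a positive multiple of $\omega^{WP}$. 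This pair $(\lambda, h^Q)$ is again $PGL$-equivariant, hence descends to the moduli stack; a tensor power is needed to kill the generic stabilizers and obtain an honest orbifold line bundle.

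Third, I would extend to a compactification $\ol{\cM}_{\text{can}}$. Working over a smooth cover of a compactification of $\cH$ (using that $\cM_{\text{can}}$ is an algebraic space), Theorem~\ref{th:main2.0} provides an extension of $c_1(\cK_{\cX/\cH},h)$ to a positive, closed current $\omega^{KE}_{\ol\cX}$ with at most analytic singularities. Fiber integration of $(\omega^{KE}_{\ol\cX})^{n+1}$ over the proper family then yields a closed positive current on the compactified base. After removing the contribution of components supported on the boundary divisor via Siu's decomposition theorem for positive closed currents, the remainder has analytic singularities and agrees with $\omega^{WP}$ on the open part; invariance under the group action lets it descend to $\ol{\cM}_{\text{can}}$ in the orbifold sense. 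Since $\lambda$ admits local holomorphic extensions across the boundary of the algebraic space, Theorem~\ref{th:main2} applied to this extended current produces a global orbifold line bundle, a tensor power of which carries a singular positive hermitian metric with at most analytic singularities restricting to $h^Q$ on $\cM_{\text{can}}$.

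The main obstacle is the last step: making sense of fiber integration of the singular current $\omega^{KE}_{\ol\cX}$ across the degenerate fibers, and separating the genuine \wp contribution from spurious boundary components. Here one has to invoke Siu's decomposition together with the local integrability of $(\omega^{KE}_{\ol\cX})^{n+1}$ coming from the analytic-singularity control in Theorem~\ref{th:main2.0}, and then check that the descent to the moduli stack is compatible both with the fiber integral and with the Quillen metric construction under finite group quotients.
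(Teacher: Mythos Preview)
Your overall architecture matches the paper's: strict positivity from the elliptic equation, the BGS formula to identify $\omega^{WP}$ as a Chern form, Theorem~\ref{th:main2.0} to extend the curvature current over the boundary, and Theorem~\ref{th:main2} to extend the line bundle. Two points deserve comment.

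\medskip

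\textbf{Determinant line bundle.} Your choice of $\det Rf_*(\cK^{\otimes\nu})$ for large $\nu$, followed by an unspecified ``integer linear combination'', is workable but imprecise. The paper instead takes the virtual bundle $\cE=(\cK_{\cX/S}-\cK_{\cX/S}^{-1})^{n+1}$ (or the $m$-th power version). Because $\cE$ has virtual rank zero, the Todd class contributes only its constant term, and the BGS formula yields directly
\[
c_1(\lambda(\cE),h^Q)=2^{n+1}\int_{\cX/S}c_1(\cK_{\cX/S},h)^{n+1},
\]
with no need to combine several determinants. This also makes the local extension of $\lambda$ over the flat compactified family immediate, since $\cK_{\cX/S}^{\otimes m}$ is the pull-back of $\cO_{\mathbb P_N}(1)$.

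\medskip

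\textbf{The fiber integral across degenerate fibers.} Here there is a genuine gap. You propose to form $(\omega^{KE}_{\ol\cX})^{n+1}$ as a current on $\ol\cX$, push it forward, and then strip off boundary components via Siu's decomposition. But $\omega^{KE}_{\ol\cX}$ is only a positive closed current with (unbounded) analytic-singularity potentials; its $(n+1)$-fold wedge product is not defined, and ``local integrability from analytic-singularity control'' does not furnish such a product. The paper explicitly avoids wedge products of currents. Over the smooth locus it writes $\omega^{KE}_\cX=\omega^0_\cX+\ddb u$, where $\omega^0_\cX$ is the pulled-back Fubini--Study form and $u$ solves the fiberwise Monge--Amp\`ere equation. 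Expanding,
\[
(\omega^0_\cX+\ddb u)^{n+1}=(\omega^0_\cX)^{n+1}+\sum_{j=0}^{n}\ddb\Bigl(u\,(\omega^0_\cX)^j(\omega^0_\cX+\ddb u)^{n-j}\Bigr),
\]
and one fiber-integrates each term separately. The first term is the Hilbert-scheme \wp form of Section~\ref{se:wphilb}, which extends with continuous potential by Varouchas. For the remaining terms one commutes $\ddb$ with $\int_{\cX/\cH}$; the resulting potentials are $\int_{\cX_s} u\,(\omega^0_\cX)^j(\omega^0_\cX+\ddb u)^{n-j}$. The crucial input is Yau's $C^0$-estimate (Section~\ref{se:degfam}), which bounds $u$ from above uniformly up to a $\log|\sigma|$ correction, while $\int_{\cX_s}(\omega^0_\cX)^j(\omega^0_\cX+\ddb u)^{n-j}$ is constant in $s$. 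Hence each potential is bounded above near the boundary, so its $\ddb$ extends as a closed current and Proposition~\ref{pr:extcurr} glues the local extensions. This decomposition, not an appeal to integrability of a wedge power, is the missing idea in your third step.
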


These facts imply a short proof for the {\it quasi-projectivity of the
moduli space of canonically polarized manifolds}: The proof starts from
Mumford's construction of the moduli space as an algebraic space
(possessing a compactification as an analytic space), where the
generalized \wp metric is known to be the Chern form of a certain
determinant line bundle equipped with a Quillen metric. The generalized
Kodaira embedding theorem (cf.\ \cite{s-t}, and the discussion in
\cite{v2}) yields quasi-projectivity, if the determinant line bundle can
be extended to a compactification, and the Quillen metric possesses an
extension as a singular (semi-positive) hermitian metric.

It was pointed out by Eckart Viehweg in \cite[p.~4]{v2} that the extension
of the \wp current is not automatic. This issue is closely related to the
extendability of the determinant line bundle, a problem emphasized by
Kollar \cite{ko1} -- problems, which are addressed in our manuscript.

A further application is the following fact. It is related to
Shafarevich's hyperbolicity conjecture for higher dimensions, which was 
solved by Migliorini \cite{m}, Kovacs \cite{kv1,kv2,kv3}, Bedulev-Viehweg
\cite{b-v}, and Viehweg-Zuo \cite{v-z,v-z2}.

{\bf Application.} {\it Let $\cX \to C$ be a non-isotrivial holomorphic
family of canonically polarized complex surfaces over a curve. Then
$g(C)>1$.}

\section{Positivity of $K_{\cX/S}$}\label{se:posi}
Let $X$ be a canonically polarized manifold of dimension $n$ equipped with
a \ke metric $\omega_X$. In terms of local holomorphic coordinates
$(z^1,\ldots, z^n)$ we write
$$
\omega_X=\ii g_{\alpha\ol\beta}(z)\; dz^\alpha\wedge dz^\ol\beta
$$
so that the \ke equation reads
\begin{equation}\label{eq:ke}
\omega_X=-{\rm Ric}(\omega_X),  \text{ i.e. }  \omega_X= \ddb \log g(z),
\end{equation}
where $g:=\det g_{\alpha\ol\beta}$. We consider $g$ as a hermitian metric
on the anti-canonical bundle $K_X^{-1}$.

For any holomorphic family of compact, canonically polarized manifolds $f:
\cX \to S$ of dimension $n$ with fibers $\cX_s$ for $s\in S$ the \ke forms
$\omega_{\cX_s}$ depend differentiably on the parameter $s$. The resulting
relative \ka form will be denoted by
$$
\omega_{\cX/S} = \ii g_{\alpha,\ol\beta}(z,s)\;dz^\alpha\wedge dz^\ol\beta.
$$
The corresponding hermitian metric on the relative anti-canonical bundle
is given by $g=\det \gab(z,s)$.  We consider the real $(1,1)$-form
$$
\omega_\cX= \ddb \log g(z,s)
$$
on the total space $\cX$. We will discuss the question, whether
$\omega_\cX$ is a \ka form on the total space.

The \ke equation \eqref{eq:ke} implies that
$$
\omega_\cX|\cX_s = \omega_{\cX_s}
$$
for all $s\in S$. In particular $\omega_\cX$, restricted to any fiber, is
positive definite. Our  result is the following statement.

\begin{theorem}\label{th:main}
Let $\cX \to S$ be a holomorphic family of canonically polarized, compact,
complex manifolds. Then the hermitian metric on $\cK_{\cX/S}$ induced by
the \ke metrics on the fibers is semi-positive and strictly positive in
fiber direction. It is strictly positive over points of the base, where
the family is not infinitesimally trivial.
\end{theorem}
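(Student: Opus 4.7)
The plan is to carry out a fiberwise analysis: choose a horizontal lift of a base tangent vector, reduce positivity of $\omega_\cX$ to a scalar statement on each fiber, derive an elliptic equation for that scalar involving the harmonic \ks representative, and deduce strict positivity from spectral properties of the operator. Fix $s_0\in S$ and a tangent vector $\pt/\pt s$. In local coordinates $(z^1,\ldots,z^n,s)$ on $\cX$ I would define the $\omega_\cX$-horizontal lift $v=\pt_s+a^\alpha\pt_\alpha$ of $\pt/\pt s$ by requiring $\omega_\cX(v,\pt_{\ol\beta})=0$ for every $\beta$, which forces $a^\alpha=-g^{\alpha\ol\beta}\pt_s\pt_{\ol\beta}\log g$. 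In the basis $v,\pt_1,\ldots,\pt_n$ the Hermitian form $\omega_\cX$ is block diagonal with positive-definite fiber block $g_{\alpha\ol\beta}$; hence (strict) positivity of $\omega_\cX$ reduces to (strict) positivity of the scalar $\varphi:=\omega_\cX(v,\bar v)$ on each fiber. A short computation using the \ke equation shows that with this choice $A:=\db v=\pt_{\ol\beta}a^\alpha\,dz^{\ol\beta}\otimes\pt_\alpha$ is the \emph{harmonic} representative of the \ks class $\rho(\pt/\pt s)$ on $\cX_{s_0}$.

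Next I would compute $\varphi$ as a function on $\cX_{s_0}$. Expanding
$$
\varphi=\pt_s\pt_{\ol s}\log g+a^\alpha\pt_\alpha\pt_{\ol s}\log g+\ol{a^\beta}\pt_s\pt_{\ol\beta}\log g+a^\alpha\ol{a^\beta}\pt_\alpha\pt_{\ol\beta}\log g,
$$
applying the fiberwise \ke equation $\pt_\alpha\pt_{\ol\beta}\log g=g_{\alpha\ol\beta}$, and using the harmonic gauge $\dbs A=0$, the outcome is the central elliptic identity
$$
(\Box+\mathrm{id})\,\varphi=|A|^2
$$
on $\cX_{s_0}$, where $\Box=\dbs\db$ is the complex Laplacian on functions and $|A|^2=g_{\alpha\ol\gamma}g^{\delta\ol\beta}A^\alpha_{\ol\beta}\overline{A^\gamma_{\ol\delta}}$ is the pointwise squared norm of $A$. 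The \ke equation is what produces the zeroth-order $+\mathrm{id}$ term with coefficient exactly $1$; the harmonic gauge is what collapses the right-hand side to $|A|^2$.

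The operator $\Box+\mathrm{id}$ is self-adjoint and elliptic with smallest eigenvalue at least $1$, hence invertible, and I would verify that its inverse preserves non-negativity strictly. If $(\Box+\mathrm{id})\varphi=f\ge 0$, then at an interior minimum $z_0$ of $\varphi$ one has $\Box\varphi(z_0)\le 0$, so $\varphi(z_0)\ge f(z_0)\ge 0$; hence $\varphi\ge 0$ globally. If furthermore $f\not\equiv 0$, the strong minimum principle forbids $\varphi$ from vanishing at any interior point (otherwise $\varphi\equiv 0$ and then $f\equiv 0$). Applied to $f=|A|^2$ this proves the theorem: $\omega_\cX$ is semi-positive on all of $\cX$, strictly positive in fiber directions directly from the \ke equation, and strictly positive at every point over $s_0$ exactly when $A_{s_0}\not\equiv 0$, i.e.\ when the family is infinitesimally non-trivial at $s_0$.

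The main obstacle will be the derivation of the elliptic identity $(\Box+\mathrm{id})\varphi=|A|^2$ in the middle stage. It is a computation in which three ingredients must conspire: the horizontal-lift expansion of $\varphi$, the fiberwise \ke equation (which produces the $+\mathrm{id}$ term with precisely the coefficient $1$), and the harmonic gauge condition on $A$ (which eliminates gradient cross-terms on the right). None of the three is dispensable, and it is the harmony of all three that makes $\varphi$ emerge as the solution of a clean elliptic equation driven by $|A|^2$.
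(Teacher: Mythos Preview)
Your proposal is correct and follows essentially the same route as the paper: horizontal lift with respect to $\omega_\cX$, identification of $\ol\pt v$ with the harmonic \ks representative, the key identity $(\Box+\mathrm{id})\varphi=|A|^2$, and the minimum-principle argument for $\varphi\ge 0$. The only notable difference is in the strict-positivity step: you invoke the Hopf strong minimum principle directly, whereas the paper gives a self-contained argument via the real analyticity of \ke metrics (Kazdan--DeTurck) and a homogeneous expansion, while noting in a footnote that the Protter--Weinberger strong maximum principle yields the same conclusion; your shortcut is legitimate and is precisely the alternative the paper acknowledges.
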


Both the statement of the Theorem and the methods are valid for smooth,
proper families of singular (even non-reduced) complex spaces (for the
necessary theory cf.\ \cite{f-s:extremal}).

It is sufficient to prove the theorem for one-dimensional families
assuming $S\subset \C$. (In order to treat singular base spaces, we can
reduce the claim to non-reduced base spaces of embedding dimension one,
where the arguments below are still meaningful and can be applied
literally.)

We denote the \ks map for the family $f:\cX \to S$ at a given point
$s_0\in S$ by
$$
\rho_{s_0} :T_{s_0} \to H^1(X, \cT_X)
$$
where $X=\cX_{s_0}$. The family is called {\it effectively parameterized}
at $s_0$, if $\rho_{s_0}$ is injective. The \ks map is induced as edge
homomorphism by the short exact sequence
$$
0 \to  \cT_{\cX/S} \to \cT_\cX \to f^*\cT_S \to 0.
$$
If $v \in T_{s_0}S$ is a tangent vector, say $v=\frac{\pt}{\pt s}|_{s_0}$
and $\frac{\pt}{\pt s} + b^\alpha \frac{\pt}{\pt z^\alpha}$ is any lift to
$\cX$ along $X$, then
$$
\ol\pt\left(\frac{\pt}{\pt s} + b^\alpha(z) \frac{\pt}{\pt z^\alpha}\right)=
\frac{\partial b^\alpha(z)}{\partial z^\ol\beta}
\frac{\pt}{\pt z^\alpha} dz^\ol\beta
$$
is a $\ol\pt$-closed form on $X$, which represents $\rho_{s_0}(\pt / \pt s)$.
Observe that $b^\alpha$ is not a tensor on $X$ unless the family is
infinitesimally trivial.

We will use the semi-colon notation as well as raising and lowering of indices
for covariant derivatives with respect to the {\it \ke metrics on the fibers}.
The $s$-direction will be indicated by the index $s$. In this sense the
coefficients of $\omega_\cX$ will be denoted by $g_{s\ol s}$, $g_{\alpha\ol
s}$, $\gab$ etc.

Next, we define {\it canonical lifts} of tangent vectors of $S$ as
differentiable vector fields on $\cX$ along the fibers of $f$ in the sense of
Siu \cite{siu:canlift}. By definition these satisfy  the property that the
induced representative of the \ks class is {\it harmonic}  (cf.\ also
\cite{sch:curv}).

Since the form $\omega_\cX$ is positive, when restricted to fibers, {\em
horizontal lifts} of tangent vectors with respect to the pointwise
sesquilinear form $\langle-,-\rangle_{\omega_\cX}$ are well-defined.
\begin{lemma}\label{le:canlift}
The horizontal lift of $\pt/\pt s$  equals
$$
v = \pt_s + a_s^\alpha \pt_\alpha,
$$
where
$$
a_s^\alpha = - g^{\ol\beta \alpha} g_{s \ol \beta}.
$$
\end{lemma}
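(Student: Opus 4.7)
The plan is a direct computation from the definition of horizontality. A lift of $\partial/\partial s$ to $\cX$ along the fiber $X = \cX_s$ has the form $v = \partial_s + a^\alpha \partial_\alpha$ for some (smooth) coefficients $a^\alpha$ on $X$; the lifts differ only in the tangential component $a^\alpha\partial_\alpha$. Horizontality with respect to $\omega_\cX$ means exactly that $v$ is pointwise orthogonal to the fiber tangent space $T_{\cX_s}$ under the sesquilinear form induced by $\omega_\cX$, i.e.\ $\langle v, \partial_\gamma\rangle_{\omega_\cX} = 0$ for every $\gamma = 1,\dots,n$.

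Writing $\omega_\cX = \ii g_{A\ol B}\, dz^A \wedge dz^{\ol B}$ with capital indices ranging over $s,1,\dots,n$ and the coefficients $g_{s\ol s},\, g_{\alpha\ol s},\, g_{s\ol\beta},\, \gab$ introduced above, this orthogonality condition reads
$$
0 \;=\; \langle \partial_s + a^\alpha \partial_\alpha,\, \partial_\gamma\rangle_{\omega_\cX}
\;=\; g_{s\ol\gamma} + a^\alpha\, g_{\alpha\ol\gamma}
\qquad (\gamma = 1,\dots,n).
$$
The matrix $(\gab)$ is the Kähler-Einstein metric on $\cX_s$, in particular invertible; let $(g^{\ol\beta\alpha})$ denote its inverse. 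Multiplying by $g^{\ol\gamma\delta}$ and relabelling then yields uniquely
$$
a^\alpha \;=\; -\,g^{\ol\beta\alpha}\, g_{s\ol\beta},
$$
which is the asserted formula (with the subscript $s$ recording the horizontal direction being lifted).

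There is essentially no obstacle here; the only point worth checking is the consistency of the computation with the positive-definiteness of $\omega_\cX$ restricted to the fiber, which is automatic from the \ke equation \eqref{eq:ke}. The content of the lemma is really the coordinate formula itself, which will be used in the sequel (presumably to show that the $\ol\pt$ of the horizontal lift agrees with Siu's canonical lift and represents a harmonic \ks form).
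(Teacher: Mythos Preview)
Your proof is correct and is exactly the computation the paper has in mind; in fact the paper states Lemma~\ref{le:canlift} without proof, treating the orthogonality condition $g_{s\ol\gamma}+a^\alpha g_{\alpha\ol\gamma}=0$ and its solution via the invertible fiber metric as immediate. There is no alternative approach to compare.
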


\begin{proposition}\label{pr:harmrep}
The horizontal lift induces the harmonic representative of
$\rho_{s_0}(\pt/\pt s)$.
\end{proposition}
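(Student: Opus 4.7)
The plan is to verify directly that $A := \ol\pt v = A^\alpha_{\ol\beta}\,\pt_\alpha\otimes dz^{\ol\beta}$, with $A^\alpha_{\ol\beta} = \pt a_s^\alpha / \pt z^{\ol\beta}$, is harmonic on the \ke fiber $X$. By the formula recalled in the introduction, $A$ already represents the class $\rho_{s_0}(\pt/\pt s) \in H^1(X,\cT_X)$, and $\ol\pt A = 0$ is automatic from $\ol\pt{}^2 = 0$. The content of the proposition is therefore the identity $\dbs A = 0$.

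The idea is to exploit the \ke equation together with the Kähler-type symmetries on $\cX$ (which hold because $\omega_\cX = \ddb\log g$ is $d$-closed) to collapse the divergence of $A$ to an algebraic identity forced by $\mathrm{Ric}(\omega_X) = -\omega_X$. First, the symmetry $\pt_\alpha g_{s\ol\mu} = \pt_s g_{\alpha\ol\mu}$ on $\cX$, combined with $\nabla_\alpha g^{\ol\mu\gamma} = 0$ and the Lemma, yields the compact formula
$$
\nabla_\alpha a_s^\gamma \;=\; -g^{\ol\mu\gamma}\,\pt_s g_{\alpha\ol\mu}.
$$
Second, the Ricci identity $[\nabla_\alpha,\nabla_{\ol\beta}]a_s^\gamma = R^\gamma{}_{\delta\alpha\ol\beta}\,a_s^\delta$, contracted with $g^{\ol\beta\alpha}$, combined with the \ke relation $g^{\ol\beta\alpha}R^\gamma{}_{\delta\alpha\ol\beta} = -\delta^\gamma_\delta$, gives
$$
(\dbs A)^\gamma \;=\; -g^{\ol\beta\alpha}\nabla_\alpha\nabla_{\ol\beta} a_s^\gamma \;=\; -g^{\ol\beta\alpha}\nabla_{\ol\beta}\nabla_\alpha a_s^\gamma \;+\; a_s^\gamma,
$$
reducing the proposition to the identity $g^{\ol\beta\alpha}\nabla_{\ol\beta}\nabla_\alpha a_s^\gamma = a_s^\gamma$.

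This remaining identity I would verify by substituting the formula for $\nabla_\alpha a_s^\gamma$ and expanding $\nabla_{\ol\beta}$ via the antiholomorphic Christoffel $\Gamma^{\ol\delta}_{\ol\beta\ol\mu}$, invoking the Kähler symmetry $\pt_{\ol\beta}\pt_s g_{\alpha\ol\mu} = \pt_s\pt_{\ol\mu} g_{\alpha\ol\beta}$ and the \ke consequence that $g^{\ol\beta\alpha}g_{\alpha\ol\beta}\equiv n$ is constant on $\cX$, so in particular $\pt_{\ol\mu}(\pt_s\log g) = g_{s\ol\mu}$. The ``Laplacian'' contribution then reduces to $g^{\ol\mu\gamma}g_{s\ol\mu} = -a_s^\gamma$, plus a residue involving $\pt_{\ol\mu}g^{\ol\beta\alpha}$; the Christoffel contribution, rewritten using $g^{\ol\delta\epsilon}\pt_{\ol\beta}g_{\epsilon\ol\mu} = -g_{\epsilon\ol\mu}\pt_{\ol\beta}g^{\ol\delta\epsilon}$, is precisely the negative of that residue. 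The main obstacle is the tight bookkeeping in this final step; that the residual terms cancel is a direct manifestation of the \ke equation rigidly constraining the first variation of the fiber metric.
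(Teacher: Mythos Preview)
Your proposal is correct and follows essentially the same approach as the paper: both verify $\dbs A=0$ directly by combining the K\"ahler-type symmetries of $\omega_\cX$ (in particular $\partial_\alpha g_{s\ol\mu}=\partial_s g_{\alpha\ol\mu}$), the Ricci identity, and the \ke relation $\partial_{\ol\mu}\partial_s\log g = g_{s\ol\mu}$ together with $R^\alpha_{\ \beta}=-\delta^\alpha_\beta$. The paper's organization is slightly more economical---it commutes covariant derivatives on $g_{s\ol\beta}$ rather than on $a_s^\gamma$, so the curvature term and the $\partial_s\log g$ term cancel in one line without the explicit Christoffel bookkeeping and residue cancellation you describe---but the ingredients and logic are the same.
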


\begin{proof}
The \ks form of the tangent vector $\pt/\pt_{s_0}$is given by $\ol\pt
v|\cX_s = a^\alpha_{s;\ol\beta}\pt_\alpha dz^\ol\beta$.

The above equation follows immediately: We consider the tensor
$$
A^\alpha_{s\ol\beta}:= a^\alpha_{s;\ol\beta}|{\cX_{s_0}}
$$
on $X$. Then
\begin{gather*}
g^{\ol\beta\gamma} A^\alpha_{s\ol\beta;\gamma}= - g^{\ol\beta\gamma}
g^{\ol\delta \alpha} g_{s\ol\delta;\ol\beta\gamma} = - g^{\ol\beta\gamma}
g^{\ol\delta \alpha} g_{s\ol\beta;\ol\delta\gamma} =
-g^{\ol\beta\gamma}g^{\ol\delta \alpha} \left( g_{s\ol\beta;\gamma\ol\delta} -
g_{s\ol\tau}R^\ol\tau_{\; \ol\beta\ol\delta\gamma} \right)\\
=-g^{\ol\delta\alpha}\left(\left({\pt\log g}/{\pt s} \right)_{;\ol\delta} +
g_{s\ol \tau}R^\ol\tau_{\; \ol\delta}\right) =
0.
\end{gather*}
\end{proof}
Next, we introduce a {\it global} function $\varphi(z,s)$, which is the
pointwise inner product of the canonical lift $v$ of $\pt/\pt s$ at $s\in S$
with itself with respect to $\omega_\cX$. Since $\omega_\cX$ is not known to be
positive definite in all directions, $\varphi\geq 0$ is not known at this
point.
\begin{lemma}\label{le:varphi}
$$
\varphi = \langle \pt_s + a_s^\alpha \pt_\alpha, \pt_s + a_s^\alpha
\pt_\beta  \rangle_{\omega_\cX} = g_{s\ol s} - g_{\alpha\ol s} g_{s\ol\beta}
g{^{\ol\beta\alpha}}
$$
\end{lemma}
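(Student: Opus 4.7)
The statement is really a direct coordinate computation, so the plan is bookkeeping rather than structural. I would first note that the $(1,1)$-form $\omega_{\cX}$ on the total space gives a sesquilinear pairing on $T_{\cX}$ via $\langle \partial_A, \partial_B\rangle_{\omega_{\cX}} = g_{A\ol B}$, where the indices $A,B$ run over $\{s, 1, \dots, n\}$. Applied to the horizontal lift $v = \partial_s + a_s^\alpha\partial_\alpha$ from Lemma~\ref{le:canlift}, this expansion gives four terms:
\[
\varphi = g_{s\ol s} + a_s^\alpha g_{\alpha\ol s} + \ol{a_s^\beta}\, g_{s\ol\beta} + a_s^\alpha\,\ol{a_s^\beta}\, g_{\alpha\ol\beta}.
\]

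The next step is to substitute $a_s^\alpha = -g^{\ol\beta\alpha}g_{s\ol\beta}$, together with its conjugate $\ol{a_s^\beta} = -g^{\ol\beta\alpha}g_{\alpha\ol s}$ (obtained from the Hermiticity relations $\ol{g_{s\ol\beta}} = g_{\beta\ol s}$ and $\ol{g^{\ol\beta\alpha}} = g^{\ol\alpha\beta}$, with suitable renaming of dummy indices). The two cross-terms each become
\[
-g^{\ol\beta\alpha}\, g_{s\ol\beta}\, g_{\alpha\ol s},
\]
while the pure quadratic term simplifies, using $g_{\alpha\ol\beta} g^{\delta\ol\beta} = \delta^\delta_\alpha$, to
\[
a_s^\alpha\,\ol{a_s^\beta}\, g_{\alpha\ol\beta} = g^{\ol\gamma\alpha}g_{s\ol\gamma}\,g^{\delta\ol\beta}g_{\delta\ol s}\,g_{\alpha\ol\beta} = g^{\ol\beta\alpha}\,g_{s\ol\beta}\,g_{\alpha\ol s}.
\]

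Adding up, two equal negative contributions combine with one equal positive contribution to leave exactly $-g^{\ol\beta\alpha} g_{s\ol\beta}g_{\alpha\ol s}$, giving
\[
\varphi = g_{s\ol s} - g_{\alpha\ol s}\,g_{s\ol\beta}\,g^{\ol\beta\alpha},
\]
as asserted. There is no genuine obstacle here; the only thing to be careful about is index placement and complex conjugation of the inverse metric, which is a standard matter of using the conventions established just above the lemma.
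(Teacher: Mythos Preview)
Your proposal is correct and follows exactly the same route as the paper: expand the sesquilinear pairing of the horizontal lift with itself into four terms and substitute $a_s^\alpha = -g^{\ol\beta\alpha}g_{s\ol\beta}$ from Lemma~\ref{le:canlift}. The paper records only the four-term expansion and leaves the simplification implicit, whereas you spell it out; there is no difference in method.
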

\begin{proof}
The proof follows from Lemma~\ref{le:canlift} and
$$
\varphi = g_{s\ol s} + g_{s\ol\beta}a^\ol\beta_{\ol s} + a_s^\alpha g_{\alpha\ol s}
+ a_s^\alpha a_{\ol s}^\ol\beta \gab.
$$
\end{proof}

Denote by $\omega^{n+1}_\cX$ the $(n+1)$-fold exterior product, divided by
$(n+1)!$ and by $dV$ the Euclidean volume element in fiber direction. Then the
global real function $\varphi$ satisfies the following property:
\begin{lemma}
$$
\omega^{n+1}_\cX= \varphi \cdot g \cdot dV\ii ds\wedge \ol{ds}.
$$
\end{lemma}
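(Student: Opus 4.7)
The plan is to reduce this identity to pure linear algebra: express the top power $\omega_\cX^{n+1}/(n+1)!$ through the determinant of the full Hermitian matrix of $\omega_\cX$ on the total space, and then apply the Schur complement formula to that determinant.

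First I would invoke the standard pointwise identity that for a real $(1,1)$-form $\omega=\ii\sum_{i,j} h_{i\ol j}\, dw^i\wedge d\ol w^{j}$ on a complex manifold of dimension $n+1$, its $(n+1)$-st exterior power satisfies
$$
\omega^{n+1}/(n+1)! \;=\; \det(h_{i\ol j})\cdot \ii^{\,n+1}\, dw^1\wedge d\ol w^1\wedge\cdots\wedge dw^{n+1}\wedge d\ol w^{n+1}.
$$
Applied to $\omega_\cX$ in the coordinates $(z^1,\ldots,z^n,s)$ on the total space, this rewrites the left hand side as $\det(g_{i\ol j})\cdot dV\wedge \ii\, ds\wedge\ol{ds}$, so the claim reduces to the single scalar identity $\det(g_{i\ol j}) = g\cdot\varphi$.

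For this I would decompose the Hermitian matrix into blocks
$$
(g_{i\ol j}) \;=\; \begin{pmatrix} \gab & g_{\alpha\ol s}\\ g_{s\ol\beta} & g_{s\ol s}\end{pmatrix},
$$
where the $n\times n$ upper-left block $(\gab)$ is the fiberwise \ke metric, hence invertible with inverse $(\gba)$. The Schur complement formula then gives
$$
\det(g_{i\ol j}) \;=\; \det(\gab)\cdot\bigl(g_{s\ol s} - g_{s\ol\beta}\,\gba\, g_{\alpha\ol s}\bigr),
$$
and by Lemma~\ref{le:varphi} the second factor is exactly $\varphi$, while the first factor is $g$ by definition.

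Since the argument is just one classical volume-form formula combined with a block-matrix identity, I do not foresee a substantive obstacle. The conceptual observation worth recording is that $\varphi$ is intrinsically the Schur complement of the fiber block inside $(g_{i\ol j})$; this explains both why $\varphi$ has the form appearing in Lemma~\ref{le:varphi} and why its sign governs positivity of $\omega_\cX$ transverse to the fibers, since positivity of $\varphi$ is equivalent to positive-definiteness of the full Hermitian matrix, i.e.\ to $\omega_\cX$ being a \ka form at the given point.
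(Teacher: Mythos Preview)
Your proof is correct and is precisely the approach the paper indicates: the paper's proof simply says to compute the $(n+1)\times(n+1)$ determinant of the block matrix with entries $g_{s\ol s}$, $g_{s\ol\beta}$, $g_{\alpha\ol s}$, $\gab$, and your Schur complement calculation is exactly how that determinant is evaluated. The only cosmetic difference is the placement of the $s$-row/column (upper-left in the paper, lower-right in your version), which of course does not affect the determinant.
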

\begin{proof}
Compute the following $(n+1)\times(n+1)$-determinant of
$$
\left(
\begin{array}{cc}
g_{s\ol s} & g_{s\ol\beta}\\ g_{\alpha\ol s}& \gab
\end{array}
\right),
$$
where $\alpha,\beta=1,\ldots,n$.
\end{proof}

So far we are looking at {\it local} computations, which essentially only
involve derivatives of certain tensors. The only {\it global ingredient}\/ is
the fact that we are given global solutions of the \ke equation.

The key quantity is the differentiable function $\varphi$ on $\cX$. Restricted
to any fiber it ties together the yet to be proven positivity of the hermitian
metric on the relative canonical bundle and the canonical lift of tangent
vectors, which is related to the harmonic \ks forms.

We use the Laplacian operators $\Box_{g,s}$ with non-negative eigenvalues
on the fibers $\cX_s$ so that for a real valued function $\chi$ the
equation $\Box_{g,s}\chi
 = - g^{\ol\beta\alpha}\chi_{;\alpha\ol\beta}$ holds.
\begin{proposition}\label{pr:elleq}
The following elliptic equation holds fiberwise:
\begin{equation}\label{eq:phiA}
(\Box_{g,s} + {\rm id})\varphi(z,s) = \|A_s(z,s)\|^2,
\end{equation}
where
$$
A_s=A^\alpha_{s\ol\beta} \frac{\pt}{\pt z^\alpha}dz^\ol\beta
$$
is the harmonic representative of the \ks class $\rho_s(\frac{\pt}{\pt
s})$ as above.
\end{proposition}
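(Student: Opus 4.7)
The plan is a direct computation in local fiber coordinates, using as the only global input the fact that $\omega_\cX=\ddb\log g$ has a single scalar potential $\log g$.

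The first preparatory step is to differentiate $g_{\alpha\ol\beta}=\partial_\alpha\partial_{\ol\beta}\log g$ in the $s$- and $\ol{s}$-directions; because mixed partials of the potential $\log g$ commute, this yields the identities
$$
\partial_s g_{\alpha\ol\beta}=\partial_\alpha g_{s\ol\beta},\qquad \partial_{\ol s}g_{\alpha\ol\beta}=\partial_{\ol\beta}g_{\alpha\ol s},
$$
which tie the fiber-direction data to the normal-direction components $g_{s\ol\beta}$, $g_{\alpha\ol s}$ that appear in $\varphi$ and in $a^\alpha_s$. Combined with $A^\alpha_{s\ol\beta}=\partial_{\ol\beta}a^\alpha_s$ (obtained by applying $\ol\partial$ to $a^\alpha_s=-g^{\ol\gamma\alpha}g_{s\ol\gamma}$), these identities let me rewrite $\|A_s\|^2$ as an explicit polynomial in the metric, its inverse, and the $g_{s\ol\beta}$'s.

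Next I would apply $\Box_{g,s}=-g^{\ol\beta\alpha}\partial_\alpha\partial_{\ol\beta}$ to
$$
\varphi=g_{s\ol s}-g^{\ol\beta\alpha}g_{s\ol\beta}g_{\alpha\ol s}.
$$
The first summand contributes $-g^{\ol\beta\alpha}\partial_s\partial_{\ol s}g_{\alpha\ol\beta}$, i.e.\ the $s\ol s$-derivative of the \ke equation. The second summand, expanded via Leibniz and the standard formula $\partial_{\ol\beta}g^{\ol\gamma\alpha}=-g^{\ol\gamma\delta}g^{\ol\epsilon\alpha}\partial_{\ol\beta}g_{\delta\ol\epsilon}$ together with its $\partial_\alpha$-counterpart, produces a collection of contractions of pure-fiber and mixed derivatives of the metric.

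The main obstacle is the curvature bookkeeping. Commutators of $\partial_\alpha$ and $\partial_{\ol\beta}$ acting on the tensorial expressions bring in the Riemann tensor $R^{\ol\tau}_{\;\ol\beta\ol\delta\gamma}$ of the fiber, and the decisive simplification is to contract it to the Ricci tensor and substitute $R_{\alpha\ol\beta}=-g_{\alpha\ol\beta}$, which is valid fiberwise by the normalization $\omega_X=-{\rm Ric}(\omega_X)$. Exactly one such Ricci-substituted contraction reassembles into $g_{s\ol s}-g^{\ol\beta\alpha}g_{s\ol\beta}g_{\alpha\ol s}=\varphi$, producing the ``$+{\rm id}$'' piece on the left-hand side; the coefficient $1$ is the negative Einstein constant. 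The remaining contractions, after applying the commutation identities from the first step, reorganize into $\|A_s\|^2=g^{\ol\beta\gamma}g_{\alpha\ol\delta}A^\alpha_{s\ol\beta}\overline{A^\delta_{s\ol\gamma}}$ on the right. Harmonicity of $A_s$ from Proposition~\ref{pr:harmrep} is not needed for this pointwise identity, but explains why the right-hand side represents the genuine Kodaira-Spencer data and not an arbitrary lift.
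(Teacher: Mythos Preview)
Your outline is correct and follows essentially the same route as the paper: both are direct computations of $\Box\varphi$ that exploit the single scalar potential $\log g$ (so that $\partial_s g_{\alpha\ol\beta}=\partial_\alpha g_{s\ol\beta}$, etc.) and substitute the Einstein condition $R_{\alpha\ol\beta}=-g_{\alpha\ol\beta}$ to manufacture the $+\,\mathrm{id}$ term. The paper organizes the calculation with covariant derivatives and the splitting $\varphi=g_{s\ol s}+a_s^\sigma a_{\ol s\sigma}$, which keeps the tensorial bookkeeping a bit cleaner than the raw partial-derivative expansion you sketch, but the substance is the same.

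One correction to your closing remark. You say harmonicity of $A_s$ is ``not needed for this pointwise identity.'' In the paper's computation the terms
\[
a_{\ol s\sigma}\cdot g^{\ol\delta\gamma}A^\sigma_{s\ol\delta;\gamma}
\qquad\text{and}\qquad
a_s^\sigma\cdot g^{\ol\delta\gamma}A_{\ol s\sigma\gamma;\ol\delta}
\]
arise (from commuting $(\,)_{;\gamma\ol\delta}$ on $a_s^\sigma$ and from the Leibniz expansion of $(a_s^\sigma a_{\ol s\sigma})_{;\gamma\ol\delta}$ respectively) and are each killed \emph{individually} by harmonicity; they do not cancel against anything else in the sum. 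What is true is that harmonicity (Proposition~\ref{pr:harmrep}) is itself a consequence of your first-step commutation identities together with the Ricci substitution, so a fully self-contained computation would re-derive it en route. But it is doing genuine work in the cancellation, not merely labeling the right-hand side, and isolating it as a lemma---as the paper does---is what keeps the computation short.
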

\begin{proof}
The essence to prove an elliptic equation for the tensors that involve
derivatives with respect to the parameter space is to eliminate second
order such derivatives. This is achieved by the left hand side of
\eqref{eq:phiA}. First,
\begin{eqnarray*}
g^{\ol\delta\gamma}g_{s\ol s;\gamma\ol\delta}
&=&g^{\ol\delta\gamma}\partial_s\partial_\ol s g_{\gamma\ol\delta}\\
&=&\partial_s(g^{\ol\delta\gamma}\partial_\ol s g_{\gamma\ol\delta})
 -a_s^{\gamma;\ol\delta}\partial_\ol s g_{\gamma\ol\delta}\\
&=&\partial_s\partial_\ol s \log g
 +a_s^{\gamma;\ol\delta} a_{\ol s\gamma;\ol\delta}\\
&=& g_{s \ol s}
 +a_s^\sigma{}_{;\gamma} a_{\ol s\sigma;\ol\delta} g^{\ol\delta\gamma}.
\end{eqnarray*}
Next
\begin{eqnarray*}
(a_s^\sigma a_{\ol s\ol\sigma})_{;\gamma\ol\delta}g^{\ol\delta\gamma}
&=\left(a_s^\sigma{}_{;\gamma\ol\delta} a_{\ol s\sigma}
        +A_{s\ol\delta}^\sigma A_{\ol s\sigma\gamma}
        +a_{s;\gamma}^\sigma a_{\ol s\sigma;\ol\delta}
        +a_s^\sigma A_{\ol s\sigma\gamma;\ol\delta}
\right)g^{\ol\delta\gamma} .
\end{eqnarray*}
The last term vanishes because of the harmonicity of $A_s$, and
\begin{eqnarray*}
a_{s;\gamma\ol\delta}^\sigma g^{\ol\delta\gamma}
&=&A_{s\ol\delta;\gamma}^\sigma g^{\ol\delta\gamma}
  +a_s^\lambda R^\sigma{}_{\lambda\gamma\ol\delta}g^{\ol\delta\gamma}\\
&=&0-a_s^\lambda R^\sigma{}_\lambda\\
 &=& a_s^\sigma .
\end{eqnarray*}
\end{proof}

\begin{proof}[Proof of Theorem~\ref{th:main}] We first show the semi-positivity
of the metric.

As $\omega_\cX$ is positive definite in fiber direction, we need to show
only that $\varphi \geq 0$ (or $\varphi >0$ resp.) For any fixed $s\in S$,
let
$$
\varphi(z,s) \geq \varphi(z_0,s).
$$
Then
$$
\varphi(z_0,s)= \|A_s(z_0,s)\|^2 - \Box_{g,s}\varphi(z_0,s) \geq  \|A_s(z_0,s)\|^2 \geq
0.
$$
For any fixed $s\in S$ the function $\varphi|\cX_s$ is not identically
zero, otherwise by \eqref{eq:phiA} the family had to be infinitesimally
trivial at that point.

According to a the theorem of Kazdan and De Turck \cite{kdt}, \ke metrics are
real analytic (and by the implicit function theorem depend in a real analytic
way upon holomorphic parameters). This applies to the function $\varphi$.

The above argument shows that the zero set of $\varphi$ is contained in the set
of points, where all components of $A_s$ vanish.

We mention that the integral mean of $\omega^{n+1}_\cX$ taken over the fibers
is equal to the generalized \wp form on $S$ (cf.\ \cite[Theorem
7.9]{f-s:extremal}).

The strict positivity of $\varphi$ follows from the proposition below.
\end{proof}

We consider the equation \eqref{eq:phiA} locally. Let $0\in U \subset \C^n$ be
an open subset containing the origin, and $\omega_U = \frac{\ii}{2}
g_{\alpha\ol\beta}(z)dz^\alpha\wedge dz^\ol\beta$ a real analytic \ka form on
$U$.
\begin{proposition}
Let $\varphi$ and $f$ be real analytic, non-negative, real functions on $U$.
Suppose
\begin{equation}\label{eq:main}
\Box_{\omega_U}\varphi + \varphi = f
\end{equation}
holds. If $\varphi(0)=0$, then both $\varphi$ and $f$ vanish identically in a
neighborhood of $0$.
\end{proposition}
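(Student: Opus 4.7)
The plan is to extract a contradiction from the leading homogeneous component of the Taylor expansion of $\varphi$ at the origin, exploiting non-negativity at every step. I would first note that since $\varphi\geq 0$ and $\varphi(0)=0$, the origin is a global minimum of $\varphi$, so $d\varphi(0)=0$ and the complex Hessian $(\partial_\alpha\partial_{\bar\beta}\varphi)(0)$ is positive semi-definite. Contracting with the positive-definite matrix $g^{\bar\beta\alpha}(0)$ gives $\Box_{\omega_U}\varphi(0)\leq 0$, so evaluating \eqref{eq:main} at $0$ forces $f(0)=\Box_{\omega_U}\varphi(0)=0$.

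Next I would argue by contradiction: suppose $\varphi$ does not vanish identically in any neighborhood of $0$. By real-analyticity, let $\varphi_k$ be the leading non-zero homogeneous part of the Taylor expansion $\varphi=\varphi_k+\varphi_{k+1}+\cdots$. The rescaling $z\mapsto tz$ with $t\to 0$, combined with $\varphi\geq 0$, shows $\varphi_k\geq 0$ on all of $\C^n$ by homogeneity, forces $k$ to be even (otherwise $\varphi_k(-z)=-\varphi_k(z)$ would change sign), and yields $k\geq 2$ by the first step. Setting $\Box_0:=-g^{\bar\beta\alpha}(0)\partial_\alpha\partial_{\bar\beta}$ and using $g^{\bar\beta\alpha}(z)=g^{\bar\beta\alpha}(0)+O(|z|)$, the lowest-degree term of $\Box_{\omega_U}\varphi$ is $\Box_0\varphi_k$ in degree $k-2$, while $\varphi$ itself contributes nothing below degree $k$. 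Matching leading homogeneous orders in \eqref{eq:main} and applying the same rescaling trick to $f\geq 0$ forces $\Box_0\varphi_k\geq 0$ as a polynomial on $\C^n$.

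The hard part is that the zero-order term $\mathrm{id}$ in $\Box_{\omega_U}+\mathrm{id}$ obstructs a direct application of Hopf's strong maximum principle to $\varphi$ itself; passing to the leading Taylor coefficient bypasses this obstacle, since $\varphi_k$ has strictly positive homogeneity degree and the matched degree-$(k-2)$ equation no longer sees the $\varphi$-term. After a complex-linear change of holomorphic coordinates diagonalizing $g^{\bar\beta\alpha}(0)$, $\Box_0$ becomes a positive multiple of the Euclidean Laplacian, so $\varphi_k$ is superharmonic, non-negative on all of $\C^n$, and attains its infimum $0$ at the interior point $0$; the strong minimum principle then forces $\varphi_k\equiv 0$, contradicting the choice of $k$ (the degenerate sub-case $\Box_0\varphi_k\equiv 0$ is treated identically, with the maximum principle applied to $-\varphi_k$). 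Hence $\varphi$ vanishes to infinite order at $0$ and therefore identically in a neighborhood of $0$ by real-analyticity; the equation then yields $f\equiv 0$ on this neighborhood as well.
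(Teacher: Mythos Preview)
Your proof is correct and follows essentially the same strategy as the paper's: both arguments isolate the leading homogeneous term $\varphi_k$ of the Taylor expansion, read off from the degree-$(k-2)$ component of the equation that $\Box_0\varphi_k$ coincides with a non-negative homogeneous part of $f$, and then force $\varphi_k\equiv 0$ from the combination $\varphi_k\geq 0$, $\varphi_k(0)=0$, and $\Box_0\varphi_k\geq 0$.

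The only difference is in the endgame. You invoke the strong minimum principle for superharmonic functions directly on $\varphi_k$ (after a linear change of coordinates normalizing $g^{\ol\beta\alpha}(0)$), which is clean and requires only $g^{\ol\beta\alpha}(z)=g^{\ol\beta\alpha}(0)+O(|z|)$. The paper instead chooses normal coordinates of the second kind, integrates the equation over spheres $S(r)$, and compares the orders in $r$ of the various terms to reach the same contradiction; it also notes in a footnote that the result follows from the Protter--Weinberger maximum principle, which is precisely the circle of ideas you are using. Your route is slightly more direct; the paper's sphere-integration is an explicit unpacking of the same minimum principle. One minor remark: your separate treatment of the ``degenerate sub-case $\Box_0\varphi_k\equiv 0$'' is unnecessary, since the strong minimum principle for superharmonic functions already covers the harmonic case.
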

\begin{proof}\footnote{The claim is also a consequence of
\cite[Theorem 6, Chap.~2, Sect.~3]{pw}.} It follows from the assumption
that $\varphi$ has a local minimum at the origin, and \eqref{eq:main}
implies that $\Box_{\omega_U} \varphi(0)=0$ and $f(0)=0$.

We set $\Box=\Box_{\omega_U}$ and chose normal coordinates $z^\alpha$ of the
second kind for $\omega_U$ at $0$. Let $\Box_0= - \sum_{\alpha=1}^n
\frac{\pt^2}{\pt z^\alpha\pt z^\ol\alpha}$ be the standard Laplacian so that
$$
\Box=\Box_0 + h^{\ol\beta\alpha}(z)\frac{\pt^2}{\pt z^\alpha\pt z^\ol\beta}
$$
where the power series expansions of all $h^{\ol\beta\alpha}$ have no
terms of order zero or one. Also $\Box_0\varphi(0)=0$. In the following
arguments it may be necessary to replace $U$ by a smaller neighborhood of
zero. Then we can say that both $\Box \varphi$ and $\Box_0\varphi$ are
non-positive.

We suppose that $\varphi$ is not identically zero and let
$$
\varphi= \sum_{\ell\geq\ell_0} \varphi_\ell
$$
be the homogeneous expansion of $\varphi$ into polynomials of degree $\ell$
with $\varphi_{\ell_0}\neq 0$. It follows from the assumption that
$\varphi_{\ell_0} \geq 0$.

The homogeneous components of the Laplacians of least possible order are
the components of degree $\ell_0-2$
$$
(\Box\varphi)_{\ell_0 -2}=(\Box_0\varphi)_{\ell_0 -2}
$$
because of the choice of the coordinates. Suppose that $(\Box_0
\varphi)_{\ell_0 -2}= \Box_0(\varphi_{\ell_0})$ vanishes identically. Then
the mean value property implies that $\varphi_{\ell_0}\equiv 0$, which
contradicts the choice of $\ell_0$.

Now the integral over the sphere $S(r)$ of radius $r$ with respect to the
standard (flat) inner product and surface element $dA$ is taken. Let
$$
\wt\varphi(r)= \int_{S(r)} \varphi dA.
$$
Then
$$
0 \geq\int_{S(r)} \Box\varphi dA=\int_{S(r)} \Box_0\varphi dA + R(r)
$$
where the remaining term $R(r)$ is of order at least $\ell_0+2n-1$ in $r$,
whereas the integrals are of order $\ell_0+ 2n - 3$, unless they vanish
identically. In the latter case, again the Laplacians are identically zero,
implying $\varphi\equiv 0$. We consider the integrated equation
\eqref{eq:main}. The order of $\wt\varphi(r)$ is $\ell_0 + 2n -1$ so that the
lowest order term on the left hand side of
$$
\int_{S(r)} \Box\varphi dA + \wt \varphi(r) = \int_{S(r)} f dA
$$
is $c \cdot r^{\ell_0 + 2n -3}$, with $c<0$ contradicting the property of $f$.
\end{proof}

\section{Fiber integrals and Quillen metrics}\label{se:bgs}
In this section we refer to the methods how to produce a positive line
bundle on the base of a holomorphic family (cf.\ \cite{f-s:extremal}). Let
$f:\cX \to S$ be a proper, smooth holomorphic map and $\omega_{\cX/S}$ a
closed real $(1,1)$-form on $\cX$, whose restrictions to the fibers are
\ka forms. Let $(\cE, h)$ be a hermitian vector bundle on $\cX$. We denote
the determinant line bundle of $\cE$ in the derived category by
$$
\lambda(\cE)=\det f_!(\cE).
$$
The main result of Bismut, Gillet and Soulé from \cite{bgs} states the
existence of a Quillen metric $h^Q$ on the determinant line bundle such
that the following equality holds for its Chern form on the base $S$:
\begin{equation}\label{eq:bgs}
c_1(\lambda(\cE),h^Q)= \left[\int_{\cX/S}\textit{td}(\cX/S,\omega_{\cX/S})\textit{ch}(\cE,h)\right]_2
\end{equation}
holds. Here $\textit{ch}$ and $\textit{td}$ stand for the Chern and Todd
character forms.

We will apply the formula in two different situations.

We use the formula for a virtual bundle of degree zero and set $\cE=(\cL-
\cL^{-1})^{n+1}$, where $(\cL,h)$ is a hermitian line bundle. The term of
lowest degree in $\text{ch}(\cE)$ is equal to
\begin{equation}\label{eq:fib}
2^{n+1} c_1(\cL)^{n+1}
\end{equation}
so that the only contribution of the Todd character form in \eqref{eq:bgs}
is the constant $1$ resulting in the following equality.
\begin{equation}\label{eq:fib0}
c_1(\lambda(\cE),h^Q) = 2^{n+1} \int_{\cX/S}c_1(\cL,h)^{n+1}.
\end{equation}

\section{An extension theorem for hermitian line
bundles\\and positive currents}\label{se:extlinebundles}

We will need the notion of a singular hermitian metric over a (reduced)
complex space. First, we note that by definition an upper semi-continuous
function $u: \cZ \to [-\infty,\infty)$ on a complex space $\cZ$ is
plurisubharmonic, if its pull-back to any (locally given) analytic curve
is subharmonic (or identically equal to $-\infty$). Let $\cL$ be a
holomorphic line bundle on $\cZ$, then a semi-positive, singular hermitian
metric $h$ on $\cL$ is defined by the property that the locally defined
function $-\log h$ is plurisubharmonic, when pulled back to the
normalization of the space. Only in this sense we consider positive
$(1,1)$-currents on reduced complex spaces. As usual by definition, a {\em
positive current} takes non-negative values on semi-positive differential
forms.

The following is an existence theorem for a holomorphic line bundle, while
there seems to be little control over the actual extension.

For any positive closed $(1,1)$-current $T$ on a complex manifold $Y$ the
Lelong number at a point $x$ is denoted by $\nu(T,x)$, and for any $c>0$
we have the associated sets $E_c(T)=\{ x; \nu(T,x)\geq c\}$. According to
\cite[Main Theorem]{siu:curr} these are closed analytic sets.

We will use in an essential way Siu's decomposition formula for  positive,
closed currents. We state it for $(1,1)$-currents.
\begin{theorem*}[{\cite{siu:curr}}]
Let $\omega$ be a closed positive $(1,1)$-current. Then $\omega$ can be
written as a convergent series of closed positive currents
\begin{equation}\label{eq:decomp}
\omega = \sum_{\mu=0}^\infty \mu_k [Z_k] + R,
\end{equation}
where $[Z_k]$ is a current of integration over an irreducible analytic set
of codimension one, and R is a residual current with the property that
$\dim E_c(R) < \dim Y-1$ for every $c > 0$. This decomposition is locally
and globally unique: the sets $Z_k$ are precisely components of
codimension one occurring in the sublevel sets $E_c(\omega)$, and $\mu_k =
\min_{x\in Z_k} \nu(\omega; x)$ is the generic Lelong number of $\omega$
along $Z_k$.
\end{theorem*}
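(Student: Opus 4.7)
The plan is to follow Siu's original approach through local $\ddb$-potentials combined with the semicontinuity theorem for Lelong numbers cited just above. Locally on $Y$ the positive closed $(1,1)$-current can be written as $\omega = \ddb u$ for a plurisubharmonic function $u$, and $\nu(\omega,x)$ coincides with the Lelong number of $u$ at $x$. By the Main Theorem of \cite{siu:curr} each sublevel set $E_c(\omega)$ is a closed analytic subset of $Y$, so its codimension-one irreducible components are well-defined analytic hypersurfaces.

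Next, I would enumerate all irreducible codimension-one components $Z_k$ that appear in $E_{1/m}(\omega)$ as $m$ runs over the positive integers; this is a countable collection. On each $Z_k$ the function $x\mapsto \nu(\omega,x)$ is upper semicontinuous and takes a minimum value $\mu_k>0$ on a Zariski-open subset, which is the generic Lelong number along $Z_k$. The crucial local lemma is the following: if $f$ is a holomorphic defining equation for $Z_k$, then $u - \mu_k\log|f|$ is plurisubharmonic on a neighborhood of the smooth points of $Z_k$, and extends across the singular locus by the standard $L^1_{\mathrm{loc}}$-extension theorem for plurisubharmonic functions. Consequently $\omega - \mu_k[Z_k] \geq 0$ as a closed positive current.

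I would then iterate the extraction: applying the previous lemma inductively to the remaining current gives $\omega - \sum_{k\leq N}\mu_k[Z_k] \geq 0$ for every $N$. The partial sums are monotone increasing, and their masses are uniformly bounded on compact subsets by the mass of $\omega$, so the series converges weakly to a closed positive current and one sets
$$
R := \omega - \sum_{k=0}^{\infty} \mu_k[Z_k] \geq 0.
$$

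The main obstacle is verifying that $\dim E_c(R) < \dim Y - 1$ for every $c>0$, i.e.\ that the inductive extraction has exhausted all divisorial mass. The argument is by contradiction: if some $E_c(R)$ contained an irreducible hypersurface $W$, then the generic Lelong number of $R$ along $W$ would be a positive number $c'$, and the local lemma applied to $R$ and $W$ would allow one to subtract a further $c'[W]$ while keeping positivity. But then $W$ would coincide with some $Z_k$ and the total divisorial mass of $\omega$ along $W$ would exceed $\mu_k$, contradicting the definition of $\mu_k$ as the generic Lelong number of $\omega$ along $Z_k$. The local and global uniqueness of the decomposition follows from the same rigidity, since the generic Lelong number of $\omega$ along any irreducible hypersurface is an intrinsic invariant of $\omega$.
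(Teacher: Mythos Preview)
The paper does not prove this statement at all: it is quoted verbatim as a theorem of Siu \cite{siu:curr} and used as a black box in the proof of Theorem~\ref{th:extlinebdl} and Proposition~\ref{pr:extcurr}. There is therefore no ``paper's own proof'' to compare your sketch against.

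That said, your outline is a faithful summary of the standard argument (essentially Siu's). A couple of small points worth tightening if you intend this as an actual proof rather than a sketch: the assertion that $u-\mu_k\log|f|$ extends plurisubharmonically across the singular locus of $Z_k$ is not really an $L^1_{\mathrm{loc}}$-extension but an extension of psh functions across a codimension-$\geq 2$ set (Grauert--Remmert \cite{g-r}); and in the contradiction step for the residual $R$, you should argue that $W$ must already be one of the $Z_k$ (because $\nu(\omega,x)\geq \nu(R,x)\geq c'$ on $W$, forcing $W\subset E_{c'}(\omega)$), after which the generic Lelong number of $R$ along $W=Z_k$ is $\mu_k-\mu_k=0$, not positive. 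With those adjustments the sketch is sound.
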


Our extension theorem for holomorphic line bundles is the following.

\begin{theorem}\label{th:extlinebdl}
Let $Y$ be a normal complex space and $Y' = Y \setminus A$ the complement
of a nowhere dense, closed, analytic subset. Let $L'$ be a holomorphic
line bundle together with a (semi-)positive hermitian metric $h'$, which
also may be singular. Assume
\begin{itemize}
  \item[(i)] the curvature current $\omega'$ of $(L',h')$ possesses an
      extension $\omega$ to $Y$ as a closed, positive current
      \item[(ii)] the line bundle $L'$ possesses \underline{local}
          holomorphic extensions to $Y$.
\end{itemize}
Then there exists a holomorphic line bundle $(L,h)$ with a singular,
positive hermitian metric, whose restriction to $Y'$ is isomorphic to
$(L',h')$.

If $\omega$ has at most analytic singularities, then $h$ can be chosen
with this property.
\end{theorem}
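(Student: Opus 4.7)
The plan is to construct $L$ by gluing modified versions of the local extensions supplied by hypothesis~(ii), using the positive current $\omega$ both to build the extended singular metric and to control the transition cocycle across $A=Y\setminus Y'$.

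\textbf{Setup.} I would start by fixing a locally finite cover $\{U_\alpha\}$ of $Y$ by small Stein neighborhoods, chosen so that each $U_\alpha$ carries a holomorphic line bundle $M_\alpha$ extending $L'|_{U_\alpha\cap Y'}$ (hypothesis~(ii)), trivialized by a nowhere-vanishing frame $\sigma_\alpha$ after shrinking. On each $U_\alpha$ the positive closed $(1,1)$-current $\omega$ admits a plurisubharmonic potential $\psi_\alpha$ with $\omega|_{U_\alpha}=\ddb\psi_\alpha$, by the local $\ddb$-Poincar\'e lemma for positive closed currents on Stein open sets in normal complex spaces. Let $g_{\alpha\beta}\in\cO^*(U_{\alpha\beta}\cap Y')$ be the resulting transition cocycle of the $M_\alpha$'s viewed as extensions of $L'|_{Y'}$.

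\textbf{Extending the metric.} With respect to $\sigma_\alpha$ the singular metric $h'$ has local weight $\varphi_\alpha:=-\log\|\sigma_\alpha\|^2_{h'}$, plurisubharmonic on $U_\alpha\cap Y'$ by semi-positivity of $h'$, hence locally bounded above. Since $\ddb\varphi_\alpha=\omega'=\omega|_{Y'\cap U_\alpha}=\ddb\psi_\alpha|_{Y'\cap U_\alpha}$, the difference $\varphi_\alpha-\psi_\alpha|_{Y'\cap U_\alpha}$ is pluriharmonic. I would then set
$$
\tilde\varphi_\alpha(z):=\limsup_{w\to z,\;w\in Y'}\varphi_\alpha(w),
$$
obtaining (after upper semi-continuous regularization) a plurisubharmonic function on all of $U_\alpha$ by the standard extension theorem for psh functions across nowhere dense analytic subsets of a normal space. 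This gives a singular hermitian metric $\tilde h_\alpha:=e^{-\tilde\varphi_\alpha}$ on $M_\alpha$ whose curvature current agrees with $\omega|_{U_\alpha}$ up to a closed positive $(1,1)$-current supported on $A\cap U_\alpha$.

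\textbf{Gluing via meromorphic extension.} On overlaps $U_{\alpha\beta}\cap Y'$ the identity $\log|g_{\alpha\beta}|^2=\varphi_\beta-\varphi_\alpha$ together with the extensions $\tilde\varphi_\alpha,\tilde\varphi_\beta$ forces $\log|g_{\alpha\beta}|^2\in L^1_{\mathrm{loc}}(U_{\alpha\beta})$ as a difference of two plurisubharmonic functions. Consequently $g_{\alpha\beta}$ admits a meromorphic extension $G_{\alpha\beta}$ to $U_{\alpha\beta}$ as a nowhere identically zero meromorphic section of $M_\alpha\otimes M_\beta^{-1}$. The Weil divisor $(G_{\alpha\beta})$ is supported in $A\cap U_{\alpha\beta}$ and, by Siu's decomposition $\omega=\sum\mu_k[Z_k]+R$, its components along each $Z_k$ can be matched with the integer parts of the Lelong numbers $\mu_k$. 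Twisting each $M_\alpha$ with the reflexive sheaf associated to a suitable Weil divisor $D_\alpha\subset A\cap U_\alpha$ built from the $Z_k$, and adjusting $\sigma_\alpha$ and $\tilde\varphi_\alpha$ accordingly, I arrange the modified $G_{\alpha\beta}$ to be holomorphic and nowhere vanishing; the cocycle relation propagates from $Y'$ to $U_{\alpha\beta\gamma}$ by the identity principle. The resulting glued bundle $L$ restricts to $L'$ on $Y'$, and the adjusted weights $\tilde\varphi_\alpha$ patch into a global singular semi-positive metric $h$ on $L$ with curvature $\omega$.

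\textbf{Expected main difficulty and the analytic-singularities addendum.} The critical step is the compatibility of the local twists across overlaps: the locally chosen $D_\alpha$'s must fit into a consistent global adjustment, which is where Siu's decomposition is indispensable, since the numerical data $\mu_k$ is intrinsic to $\omega$ while hypothesis~(ii) only provides local bundles with a priori incompatible choices. For codimension at least two parts of $A$ the extension of $g_{\alpha\beta}$ is essentially automatic from normality, so all the subtlety is concentrated along the divisorial part of $A$. Finally, if $\omega$ has at most analytic singularities then the local potentials $\psi_\alpha$ can be chosen with analytic singularities, and the construction of $\tilde\varphi_\alpha$ and of the patched metric $h$ inherits this property; this yields the last assertion of the theorem.
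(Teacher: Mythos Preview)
Your overall strategy matches the paper's---local plurisubharmonic potentials for $\omega$, Siu's decomposition, and a twist of the local extensions along the divisorial part of $A$---but the order in which you carry it out creates a genuine gap in the step ``Extending the metric.'' You assert that $\varphi_\alpha$ is ``hence locally bounded above'' and therefore extends plurisubharmonically to $U_\alpha$ by the Grauert--Remmert theorem. This is not justified: plurisubharmonicity on $U_\alpha\cap Y'$ gives local upper bounds only on that open set, not near points of $A$. The difference $\varphi_\alpha-\psi_\alpha$ is pluriharmonic on $U_\alpha\setminus A$, and near a smooth hypersurface point $\{z=0\}$ of $A$ such a function has the general form $c\log|z|+2\operatorname{Re}(f)$ with $c\in\R$ and $f$ holomorphic on the punctured chart; either a negative $c$ or an essential singularity of $f$ drives $\varphi_\alpha$ to $+\infty$ along $A$. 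Concretely, take $Y=\C$, $A=\{0\}$, $L'=\cO_{\C\setminus\{0\}}$ with $h'(1,1)=|z|$: then $\varphi=-\log|z|$ is pluriharmonic on $\C\setminus\{0\}$, $\omega'=0$ extends trivially, yet $\varphi$ is unbounded above near $0$ and your $\limsup$ is $+\infty$ there. Your meromorphic-extension step then inherits the same defect (the $\tilde\varphi_\alpha$ are not available), and even granting them, ``$\log|g_{\alpha\beta}|^2$ is a difference of two psh functions'' does not by itself exclude essential singularities of $g_{\alpha\beta}$.

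The paper's proof is organized precisely to avoid this obstacle: it first reduces to the case where $A$ is a smooth hypersurface in a smooth $Y$, writes the pluriharmonic difference explicitly as $-\beta_i\log|z_i|+2\operatorname{Re}(f'_i)$, decomposes $\beta_i=\gamma_i+2k_i$ with $0\le\gamma_i<2$, and \emph{first} absorbs the integer part $k_i$ and the holomorphic $f'_i$ into a change of local frame $z_i^{-k_i}e^{-f'_i}$. Only after this twist is the new weight $\psi_i=\psi^0_i+\gamma_i\log|z_i|$, which is genuinely plurisubharmonic on all of $U_i$; and the modified transitions satisfy $|\tilde g'_{ij}|^2=|z_j|^{\gamma_i-\gamma_j}\cdot(\text{smooth nonvanishing})$, so holomorphicity on $U'_{ij}$ together with $|\gamma_i-\gamma_j|<2$ forces $\gamma_i=\gamma_j$ and the cocycle extends by Riemann's theorem. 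Your ``twisting by Weil divisors $D_\alpha$'' is the right instinct, but it must come \emph{before} one can speak of extended weights, and the global consistency of the twists (your ``expected main difficulty'') is exactly this $\gamma_i=\gamma_j$ forcing argument, which your outline does not supply.
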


{\em Remark.} The form $\omega$ need not be the curvature form of $h$.

\medskip

We append the notion of analytic singularities.

\begin{definition} Let $\chi$ be a plurisubharmonic function on an open subset of
$\C^n$. We say that $\chi$ has {\em at most analytic singularities}, if
locally
$$
\chi \geq \gamma\log \sum^{k}_{\nu=1} |f_\nu|^2 + const.
$$
holds, for holomorphic functions $f_\nu$ and some $\gamma>0$. In this
situation, we say that a (locally defined) positive, singular hermitian
metric of the form
$$
h=e^{-\chi}
$$
as at most analytic singularities. This property will be also assigned to
a locally $\pt\ol\pt$-exact, positive current of the form
$$
\omega=\ii \pt\ol\pt \chi.
$$
\end{definition}

\begin{proof}[Proof of Theorem~\ref{th:extlinebdl}]
We first assume that $A$ is a smooth hypersurface of a complex manifold
$Y$.

Our goal is to treat the non-integer part
$$
\sum_{\mu=0}^\infty (\mu_k - \lfloor \mu_k\rfloor)[Z_k]
$$
in \eqref{eq:decomp}.

Let $\{U_j \}$ be an open covering of $Y$ such that the set $A\cap U_i$
consists of the zeroes of a holomorphic function $z_i$ on $U_i$. Since
$L'$ possesses local extensions, we can chose $\{U_i\}$ such that the line
bundle $L'$ is given by a cocycle $g_{ij}' \in \cO_Y^*(U_{ij}')$, where
$U_{ij}=U_i\cap U_j$ and $U_{ij}'=U_{ij}\cap Y'$.  If necessary, we will
replace $\{U_i\}$ by a finer covering.

We will first show the existence of \psh\ functions $\psi_i$ on $U_i$ and
holomorphic functions $\varphi'_i$ on $U'_i$ such that
$$
h'_i\cdot|\varphi'_i|^2 = e^{-\psi_i}|U'_i
$$
where the $\psi_i-\psi_j$ are harmonic functions on $U_{ij}$:

Let
$$
\omega|U_i= \ddb (\psi^0_i)
$$
for some \psh\ functions $\psi^0_i$ on $U_i$. Now
$$
-\ddb \log(e^{\psi^0_i} h'_i)
$$
is harmonic on $U'_i$. For a suitable number $\beta_i\in \R$ and some
holomorphic function $f'_i$ on $U'_i$ we have
$$
\log(e^{\psi^0_i} h'_i) + \beta_i \log|z_i| = f'_i+ \ol{f'_i}.
$$
We write
$$
\beta_i=\gamma_i + 2k_i
$$
for $0\leq \gamma_i <2$ and some integer $k_i$. We set
\begin{equation}\label{eq:psi}
\psi_i = \psi^0_i + \gamma_i \log |z_i|.
\end{equation}
These functions are clearly \psh, and $\gamma_i \log |z_i|$ contributes as
an analytic singularity to $\psi^0_i$. Set
$$
\varphi'_i= z^{-k_i}_i e^{-f'_i} \in \cO^*(U'_i).
$$
We use the functions $\varphi'_i$ to change the bundle coordinates of $L'$
with respect to $U'_i$. In these bundle coordinates the hermitian metric
$h'$ on $L'$ is given by
$$
\wt h'_i = h'_i \cdot |\varphi'_i|^{-2}
$$
and the transformed transition functions are
$$
\wt g'_{ij}= {\varphi'_i}\cdot g_{ij}\cdot ({\varphi'_j})^{-1}.
$$
Now
\begin{equation}\label{eq:wth}
\wt h'_i = |z_i|^{-\gamma_i} e^{-\psi^0_i}|U'_i= e^{-\psi_i}|U'_i ,
\end{equation}
and
\begin{equation}\label{eq:wtg}
|\wt g'_{ij}|^2 = \wt h'_j (\wt h'_i)^{-1} =
|z_j|^{\gamma_i-\gamma_j} \left|\frac{z_i}{z_j}\right|^{\gamma_i} \cdot
e^{\psi^0_i-\psi^0_j}.
\end{equation}
Since the function $z_i/z_j$ is holomorphic and nowhere vanishing  on
$U_{ij}$, and since the function $\psi^0_i-\psi^0_j$ is harmonic on
$U_{ij}$, the function
$$
    \left|\frac{z_i}{z_j}\right|^{\gamma_i} \cdot
e^{\psi^0_i-\psi^0_j}
$$
is of class $\cinf$ on $U_{ij}$ with no zeroes. Now
$-2<\gamma_i-\gamma_j<2$, and $\wt g'_{ij}$ is holomorphic on $U'_{ij}$.
So we have
\begin{equation}\label{eq:beta}
\gamma_i=\gamma_j
\end{equation}
(whenever $A\cap U_{ij} \neq \emptyset)$. Accordingly \eqref{eq:wtg} reads
\begin{equation}\label{eq:wtg1}
|\wt g'_{ij}|^2 = \wt h'_j (\wt h'_i)^{-1} =
\left|\frac{z_i}{z_j}\right|^{\gamma_i} \cdot
e^{\psi^0_i-\psi^0_j},
\end{equation}
and the transition functions $\wt g'_{ij}$ can be extended holomorphically
to all of $U_{ij}$. So a line bundle $L$ exists.

The functions $\psi_i$ are \psh, and the equality
$$
\wh \omega = \ddb \psi_i
$$
is a well-defined positive current on $Y$ because of \eqref{eq:beta}. Its
restriction to $Y'$ equals $\omega|Y'$.

We define
$$
\wt h_i= e^{-\psi_i}
$$
on $U_i$. Because of \eqref{eq:wtg} it defines a positive, singular,
hermitian metric on $L$. This shows the theorem in the smooth case.

Finally we prove the general case, where $Y$ is normal and $A$ any nowhere
dense analytic subset. Denote by $A^0$ the union of all components of $A$,
whose dimension is less or equal to $\dim Y-2$. Let
$$
Y''= Y\backslash \left({\rm Sing }(Y) \cup  {\rm Sing }(A) \cup A^0\right).
$$
We know that $L'$ together with the positive metric can be extended to a
line bundle $L''$ on $Y''$. Again we use the fact that the given line
bundle possesses local extensions to $Y$ so that $L''$ is defined by
transition functions on $U_{ij}\cap Y''$, where again the $U_i$ form an
open covering of $Y$. Now the complement of $Y''$ in $Y$ is of codimension
at least two so that the transition functions extend. The same argument
holds for the singular hermitian metrics $\wt h''_i$ of $L''$ on $U_i \cap
Y''$: The \psh\ functions $-\log \wt h''_i$ extend as such to $U_i$ by
\cite{g-r}.
\end{proof}

We remark that the theorem also holds for orbifold structures (where a
certain finite tensor power of the given line bundles may have to be taken
in order to ensure that the bundles descend as holomorphic line bundles).
This is globally possible as long as the automorphism groups involved are
of bounded order, which is the case for moduli spaces of canonically
polarized varieties.

If $Y$ is just a reduced complex space, we still have the following
statement.

\begin{proposition}\label{pr:nonnormal}
Let $Y$ be a reduced complex space, and $A\subset Y$ a closed analytic
subset. Let $\cL$ be an invertible sheaf on $Y \backslash A$, which
possesses a holomorphic extension to the normalization of\/ $Y$ as an
invertible sheaf. Then there exists a reduced complex space $Z$ together
with a finite map $Z \to Y$, which is an isomorphism over $Y\backslash A$
such that $\cL$ possesses an extension as an invertible sheaf to $Z$.
\end{proposition}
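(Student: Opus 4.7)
The plan is to realize $Z$ as the analytic spectrum $\mathbf{Specan}_Y(\cA)$ of a carefully chosen coherent $\cO_Y$-subalgebra $\cA$ of $\wt\cO := n_*\cO_{\wt Y}$, where $n\colon \wt Y \to Y$ denotes the normalization and $\wt\cL$ is the invertible extension of $n^*\cL$ to $\wt Y$ supplied by the hypothesis. The algebra $\cA$ is to be chosen small enough that $\cA|_{Y\setminus A} = \cO_Y|_{Y\setminus A}$ (forcing $Z\to Y$ to be an isomorphism over $Y\setminus A$) yet large enough that $\wt\cL$ descends to an invertible sheaf on $Z$, so that the normalization factors as $\wt Y \to Z \to Y$.

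Concretely, I would fix a locally finite Stein open cover $\{V_\alpha\}$ of $Y$ fine enough that $\wt\cL$ is trivial on each $\wt V_\alpha := n^{-1}(V_\alpha)$ and $\cL$ is trivial on each $V_\alpha\setminus A$, with trivializing sections $s_\alpha$ and transition cocycle $g_{\alpha\beta}\in\cO_Y^*(V_{\alpha\beta}\setminus A)$ defining $\cL$. Using the identification $\wt\cL|_{\wt Y\setminus\wt A} = n^*\cL$, choose trivializing sections $\wt s_\alpha$ of $\wt\cL|_{\wt V_\alpha}$ whose restrictions to $\wt V_\alpha\setminus\wt A$ agree with the pullbacks $n^*s_\alpha$; the induced transition functions $\wt g_{\alpha\beta}\in\wt\cO^*(V_{\alpha\beta})$ then restrict on $V_{\alpha\beta}\setminus A$ to $g_{\alpha\beta}\in\cO_Y^*(V_{\alpha\beta}\setminus A)$. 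Define $\cA$ as the $\cO_Y$-subalgebra of $\wt\cO$ generated (locally) by $\cO_Y$ together with the sections $\wt g_{\alpha\beta}^{\pm 1}$. Coherence of $\cA$ follows from finite local generation inside the coherent $\cO_Y$-algebra $\wt\cO$ (the stalks $\cO_{Y,y}$ being Noetherian), and the cocycle matching yields $\cA|_{Y\setminus A}=\cO_Y|_{Y\setminus A}$. Setting $Z:=\mathbf{Specan}_Y(\cA)$ gives a reduced complex space (as $\cA\subset\wt\cO$ has no nilpotents), with a finite morphism $\pi\colon Z\to Y$ which is an isomorphism over $Y\setminus A$; since the $\wt g_{\alpha\beta}$ are by construction units of $\cA = \pi_*\cO_Z$, the cocycle $(\wt g_{\alpha\beta})$ defines an invertible sheaf $\cM$ on $Z$ whose restriction to $Y\setminus A$ is $\cL$.

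The main technical obstacle is the trivialization-matching step: one must arrange that $\wt s_\alpha$ restricts to $n^*s_\alpha$ on $\wt V_\alpha\setminus\wt A$, which amounts to extending the specific nowhere-vanishing section $n^*s_\alpha$ of $\wt\cL$ across $\wt A$ to a trivializing section on all of $\wt V_\alpha$. Since $\wt A$ may have codimension one, this is not automatic from Hartogs-type extension alone. The workaround is to perform the matching at the cocycle level: start with an arbitrary trivialization of $\wt\cL$ giving a cocycle $\wt g'_{\alpha\beta}$, compare with $n^*g_{\alpha\beta}$ on $V_{\alpha\beta}\setminus A$ (both represent the restriction of $\wt\cL$ to $\wt Y\setminus\wt A$ and hence differ by a coboundary $\wt h_\alpha/\wt h_\beta$ with $\wt h_\alpha\in\wt\cO^*(V_\alpha\setminus A)$), and use the given invertible extension $\wt\cL$ across $\wt A$ to replace the coboundary by one of units that extend across $\wt A$ after a sufficient refinement of the cover. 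As a simpler alternative, if one first enlarges $A$ to contain the non-normal locus of $Y$, then $n$ is already an isomorphism over $Y\setminus A$ and $Z=\wt Y$ trivially satisfies the conclusion, reducing the general case to bookkeeping on the added portion of $A$.
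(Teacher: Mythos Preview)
Your strategy---taking $Z$ as the analytic spectrum of a coherent $\cO_Y$-subalgebra $\cA\subset n_*\cO_{\wt Y}$ that agrees with $\cO_Y$ over $Y\setminus A$---is exactly the paper's. The paper, however, does not build $\cA$ from chosen transition functions; it takes the canonical \emph{gap sheaf}
\[
\cO_Y[A]_{n_*\cO_{\wt Y}},\qquad U\longmapsto\{\sigma\in(n_*\cO_{\wt Y})(U):\sigma|_{U\setminus A}\in\cO_Y(U\setminus A)\},
\]
whose coherence is a theorem of Siu on gap sheaves and whose finite presentation as an $\cO_Y$-algebra follows from results of Houzel, so that the analytic spectrum exists without further work. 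This is the \emph{maximal} subalgebra of $n_*\cO_{\wt Y}$ agreeing with $\cO_Y$ away from $A$; in particular it contains any algebra of your type and absorbs whatever transition cocycle one manages to produce, so your construction would in any case factor through the paper's.

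Where your argument has a genuine gap is the trivialization-matching step you yourself flag. The proposed cocycle-level workaround is not actually carried out: since $\wt A$ may have codimension-one components in $\wt Y$, the coboundary units $\wt h_\alpha\in\cO^*(\wt V_\alpha\setminus\wt A)$ have no reason to extend across $\wt A$, and ``sufficient refinement of the cover'' does not by itself supply a mechanism for this. Your ``simpler alternative'' does not rescue the situation either: enlarging $A$ to $A'=A\cup N$ (with $N$ the non-normal locus) makes $Z=\wt Y$ an isomorphism only over $Y\setminus A'$ and yields only an extension of $\cL|_{Y\setminus A'}$; recovering the statement for the original $A$ is precisely the content of the proposition, not bookkeeping.
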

\begin{proof}
Denote by $\nu:\wh Y \to Y$ the normalization of $Y$. The presheaf
$$
U\mapsto \{\sigma \in  (\nu_* \cO_{\wh Y})(U); \sigma|U\backslash A\in \cO_Y(U\backslash A)\}
$$
defines a coherent $\cO_Y$-module, the so called  {\em gap sheaf}
$$
\cO_Y[A]_{\nu_*\cO_{\wh Y}}
$$
on $Y$ (cf.\ \cite[Proposition 2]{siu:gap}). It carries the structure of
an $\cO_Y$-algebra. According to Houzel \cite[Prop.\ 5 and Prop.\ 2]{hou}
it follows that it is an $\cO_Y$-algebra of finite presentation, and hence
its analytic spectrum provides a complex space $Z$ over $Y$ (cf.\ also
Forster \cite[Satz 1]{fo}).
\end{proof}
Finally, we have to deal with the question of a {\em global} extension of
positive currents. Siu's Thullen-Remmert-Stein type theorem \cite[Theorem
1]{siu:curr} gives an answer for currents on open sets in a complex number
space. An extension for $(1,1)$-currents exists and is uniquely determined
by one local extension into each irreducible hypersurface component of the
critical set. In a global situation we need an extra argument, namely the
fact that (under some assumption), one can single out an extension, which
is unique and not depending upon the choice of some local extension.

We will show the following extension argument for positive currents given
on a normal complex space $Y$. Let $A\subset Y$ a closed analytic subset
and $Y'=Y\backslash A$.
\begin{proposition}\label{pr:extcurr}
Let $\omega'$ be a closed, positive current on $Y'$, whose Lelong numbers
vanish everywhere. Assume that for any point of $A$ there exists an open
neighborhood $U\subset Y$ such that $\omega'$ possesses an extension to
$Y'\cup U$ as a closed positive current. Then $\omega$ can be extended to
all of $Y$.

If $\omega'$ has at most analytic singularities, and if this is also true
for the local extensions, then also the global extension has at most
analytic singularities.
\end{proposition}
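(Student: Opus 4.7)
The plan is to construct the global extension $\omega$ as the trivial extension of $\omega'$ across $A$, and to use the hypotheses to verify that this is a well-defined positive closed current. First I would observe that the assumed existence of a local extension $\omega_U$ near each point of $A$ implies that $\omega'$ has locally finite mass near $A$. On the regular locus of $Y$ the Skoda--El~Mir extension theorem then produces the trivial extension $\tilde\omega$ of $\omega'$ across $A$ as a positive closed $(1,1)$-current. The further extension across $\mathrm{Sing}(Y)$ and across any codimension-$\geq 2$ components of $A$ proceeds exactly as in the proof of Theorem~\ref{th:extlinebdl}: local plurisubharmonic potentials extend by \cite{g-r} across subsets of codimension at least two.

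Next, to see that this trivial extension is compatible with the prescribed local extensions $\omega_U$, I would compare $\tilde\omega$ locally to $\omega_U$ via Siu's decomposition,
$$\omega_U = \sum_k \mu_k [Z_k] + R_U.$$
The hypothesis that $\omega'$ has vanishing Lelong numbers everywhere on $Y'$ forces every $Z_k$ to be contained in $A$: at a generic smooth point of $Z_k\cap Y'$ one would otherwise obtain $\nu(\omega',\cdot)\geq \mu_k>0$. Thus $R_U|_{U\cap Y'}=\omega'|_{U\cap Y'}$. Skoda--El~Mir also asserts that $\tilde\omega|_U$ is the \emph{minimal} positive closed extension of $\omega'|_{U\cap Y'}$, so $R_U - \tilde\omega|_U$ is positive, closed, and supported on $A\cap U$; by Siu's support theorem it is a non-negative combination of currents of integration along codimension-one components of $A\cap U$. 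But $R_U$ is a residual current and carries no divisorial part, so this combination must vanish, giving $R_U = \tilde\omega|_U$. This identifies $\tilde\omega$ intrinsically with the residual part of any given local extension.

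For the analytic-singularities part, I would start from a local plurisubharmonic potential $\chi_U$ of $\omega_U$ with analytic singularities and subtract $\sum_k \mu_k \log|g_k|^2$, with $g_k$ a local defining equation of $Z_k\subset A$. The difference is plurisubharmonic with at most analytic singularities, and it is a local potential for $R_U=\tilde\omega|_U$. Hence $\omega=\tilde\omega$ has at most analytic singularities.

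The main obstacle is to justify that distinct prescribed local extensions $\omega_U$, $\omega_V$ cannot produce incompatible divisorial contributions along codimension-one components of $A$ in the overlap. The vanishing Lelong numbers of $\omega'$ are exactly what forces each Siu residual $R_U$ to coincide with the Skoda--El~Mir trivial extension, thereby yielding a well-defined global object independent of the choice of local extension; this identification is the heart of the argument.
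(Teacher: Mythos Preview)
Your argument is correct, and it shares with the paper the essential structural steps: reduce to the smooth codimension-one part of $A$, apply Siu's decomposition to any local extension $\omega_U$, observe that the vanishing of the Lelong numbers of $\omega'$ forces every $Z_k$ into $A$ so that the residual $R_U$ already extends $\omega'$, and finally extend across $\mathrm{Sing}(Y)$ and the codimension-$\geq 2$ part of $A$ via the Grauert--Remmert extension of plurisubharmonic potentials.

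Where you genuinely diverge from the paper is in the identification step. The paper does \emph{not} invoke Skoda--El~Mir; instead it takes two local extensions $\omega_U$, $\breve\omega_U$, passes to their residual parts $R$, $\breve R$, writes local potentials $\chi,\breve\chi$, and shows $\chi-\breve\chi$ is pluriharmonic on $W\setminus A$. It then uses the Skoda--Bombieri integrability criterion (both $e^{\pm 2(\chi-\breve\chi)}\in L^1_{\mathrm{loc}}$, since the residual currents have sub-unit Lelong numbers after shrinking) to conclude $e^{\chi-\breve\chi}=|z|^{\gamma}|\varphi|^2$ with $|\gamma|<1$ and $\varphi$ nowhere vanishing; the residual property then forces $\gamma=0$, so $R=\breve R$ and the local residuals glue. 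Your route replaces this potential-theoretic computation by a single global object: the trivial (Skoda--El~Mir) extension $\widetilde\omega$, which is automatically the minimal positive closed extension, so $R_U-\widetilde\omega|_U$ is positive, closed, supported on $A$, hence divisorial by the support theorem, hence zero by the residual property. This is cleaner and more conceptual; the paper's argument, on the other hand, stays entirely within local $\partial\ol\partial$-potentials and is closer in spirit to the proof of Theorem~\ref{th:extlinebdl}. Both approaches handle the analytic-singularities clause in the same way, by subtracting the divisorial contribution from a local potential.
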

\begin{proof}
We first remove the critical sets from $Y$: Let
$$
Y''=Y\backslash \left( {\rm Sing}(Y)\cup {\rm Sing}(A)\cup A^0\right),
$$
where $A^0$ is the union of all components of $A$ whose dimension is less
or equal to $\dim Y-2$.

Let $p\in Y''\cap A$. so $Y$ is non-singular at $p$ and $p\in A$ is in the
smooth hyperplane  part of $A$.

We assume that $\omega'|Y''\cap Y'$ can be extended as a positive closed
current into some open neighborhood $U$ of $p$. Denote by $\omega_U
\subset Y''$ the extension to to $U\cup Y''$.

We apply \eqref{eq:decomp}
$$
\omega_U = \sum_{\mu=0}^\infty \mu_k [Z_k] + R\, .
$$
Since the Lelong numbers of $\omega'$ vanish everywhere, the sets $Z_k$
must be contained in $A$ so that the residual current $R$ is an extension
of $\omega'$.

Let $\breve \omega_U$ be an other extension, which can also be reduced to
its residual current $\breve R$. On some neighborhood $W$ of $p$ we write
$$
R|W = \ddb \chi\quad  \text{and}\quad \breve R|W = \ddb \breve\chi.
$$
Now
$$
\ddb (\chi-\breve \chi)|W\backslash A =0.
$$
We pick some $0<c<1$ and remove the set $E_c(\chi)\cup E_c(\breve\chi)$,
which is of codimension at least two and replace $W$ by and open subset
which does not intersect $E_c(\chi)\cup E_c(\breve\chi)$.

Let $A\cap W=V(z)$ for some coordinate function $z$ on $W$. As
$\chi-\breve\chi$ is harmonic on $W\backslash A$,
$$
\left(\chi-\breve \chi -\beta \log |z|\right)|W\backslash A = f' + \ol f'
$$
for some holomorphic function $f'\in\cO(W\backslash A)$ and $\beta \in
\R$.

Now by the theorem of Skoda and Bombieri \cite{bom,sk} both
$e^{2(\chi-\breve\chi)}$ and $e^{2(\breve\chi-\chi)}$ are in
$L^1_{loc}(W)$. We have
$$
e^{\chi-\breve\chi} = |z|^\beta |e^{f'}|^2.
$$
On can see that $f'$ can only have  poles or zeroes on $A\cap W$. Next, we
infer from the integrability conditions that
$$
e^{\chi-\breve\chi} = |z|^\gamma |\varphi|^2,
$$
where $\varphi$ is holomorphic on all of $W$ with no zeroes and
$-1<\gamma<1$. If present, the contribution $|z|^\gamma$ would constitute
a current of integration along a (local) hypersurface. Since the
decomposition formula for positive currents is compatible with restriction
to open subsets, a current of integration is not present. Hence
$\omega_U|W=\breve\omega_U|W$. The last step only contains the extension
of \psh\ functions on normal complex spaces into subsets of codimension at
least two.
\end{proof}

\section{\wp metric on Hilbert schemes\\ and Douady
spaces}\label{se:wphilb}

In \cite{b-s} and \cite{a-s} a \wp metric for Douady spaces and Hilbert
schemes was studied. Let $(Z,\omega_Z)$ be a \ka manifold, and
\begin{equation}
\xymatrix{{\cX}\ar@{^(->}[r]^-{i}\ar[dr]_f &Z\times
S\ar[d]^{{\rm{pr_2}}}
\\ & S}
\end{equation}
a flat holomorphic family of complex submanifolds of $Z$, parameterized by
a complex space $S$ of dimension $n$. Let $s_0$ be a distinguished point
of $S$ with fiber $X=\cX_{s_0}$. The associated \ks map is
$$
\rho_{s_0}: T_{s_0}S \to H^0(X, \cN_{X|Z}).
$$
Denote by $\omega_X$ the induced \ka form on $X$. The \ka form $\omega_Z$
induces a pointwise hermitian inner product
$\langle\cdot,\cdot\rangle_{\omega_Z}$ on the normal bundle $\cN_{X|Z}$,
and by integration a natural inner product on the space of its global
holomorphic sections.
\begin{definition}\label{pwdef}
The \wp inner product for $v,w \in T_{s_0}S$ equals
$$
\langle v,w\rangle_{PW} = \int_X
\langle\rho(v),\rho(w)\rangle_{\omega_Z} \omega_X^n.
$$
\end{definition}
We recall that the above hermitian inner product is positive definite in
directions, where the given family is effective, and the corresponding
hermitian form on $S$ is induced by the fiber integral
\begin{equation}\label{eq:fibinthilb}
 \omega_S^{PW} = \int\limits_{\cX/S}
(\wt \omega^{n+1}|\cX),
\end{equation}
where $n = \dim X$ and $\wt\omega$ is the pull-back of $\omega_Z$ to
$Z\times S$. The construction is intrinsic, in particular the fiber
integral commutes with base change morphisms. Again the fiber integral
formula implies that the \wp form $\omega_S^{PW}$ is Kähler.

For Hilbert schemes $S=\cH$ and $Z=\mathbb P_N$ equipped with the \fs
metric $\omega_Z= \omega^{FS}=c_1(\cO_{\mathbb P_N}(1), h^{FS})$, the line
bundle $\cL=\cO_{\mathbb P_N}(1)$ gives rise to the determinant line
bundle in the derived category $\lambda=\det f_!((\cL - \cL^{-1})^{n+1})$.
We invoke the results from Section~\ref{se:bgs} and get
$$
\omega^{WP}_\cH = \frac{1}{2^{n+1}} c_1(\lambda,h^Q).
$$
Next, we consider the component $\ol\cH$ of the Hilbert scheme
corresponding to flat (possibly singular) morphisms.
$$
\xymatrix{\ol\cX \ar[r]^{\ol i} \ar[dr]_{\ol f} & \mathbb P_N \times
\ol \cH \ar[d]^{pr_2}\\
& \ol\cH
}
$$
(with flat proper morphism $\ol f$). With $\ol \cL = \ol \imath^*
\cO_{\mathbb P_N}(1)$ we have an extension $\ol\lambda=\det \ol
f_!((\ol\cL-{\ol\cL}^{-1})^{n+1})$ of $\lambda$ as a holomorphic line
bundle to $\ol \cH$. We denote the extension of the \wp metric given by a
fiber integral analogous to \eqref{eq:fibinthilb} by
\begin{equation}\label{eq:extfibint}
 \omega_{\ol\cH} = \int\limits_{\ol\cX/\ol\cH}
(\wh\omega^{n+1}|\ol\cX),
\end{equation}
where $\wh\omega$ is the pull-back of the \fs form to $\ol\cX$.

Generally speaking, the fiber integral is also meaningful for flat
embedded families, where only the existence of one non-singular fiber is
required. It was shown in \cite[Lemme 3.4]{va1} that $\omega_\ol\cH$ is a
$d$-closed real $(1,1)$-current (positive in the sense of currents), which
possesses a {\em continuous $\pt\ol\pt$-potential}. So the curvature form
of the Quillen metric on $\lambda$ extends in this sense. From now on we
restrict ourselves to the normalization of the Hilbert scheme and of its
compactification resp.

The proof of Theorem~\ref{th:extlinebdl} implies the following fact:

\begin{proposition}\label{pr:Hilbquil}
There exists a holomorphic line bundle $\wt\lambda$ on $\ol\cH$ with
$\wt\lambda|\cH\simeq \lambda$.  The Quillen metric extends as a positive,
singular hermitian metric $\wt h^Q$ on $\wt\lambda$ with analytic
singularities and Lelong numbers less than one such that
$$
\omega_{\ol\cH}= \frac{1}{2^{n+1}} c_1(\wt\lambda,\wt h^Q).
$$
\end{proposition}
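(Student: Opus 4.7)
The strategy is to apply Theorem~\ref{th:extlinebdl} to the pair $(L',h') = (\lambda, h^Q)$ on $Y' = \cH$, with $Y = \ol\cH$ (working on the normalization, as indicated in the paragraph preceding the proposition) and extending current $\omega = 2^{n+1}\omega_{\ol\cH}$. The theorem will simultaneously deliver the holomorphic extension $\wt\lambda$ and the extended singular positive hermitian metric $\wt h^Q$.

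Hypothesis (ii) is immediate: the flat extended family $\ol f\colon\ol\cX\to\ol\cH$ yields a global holomorphic extension $\ol\lambda = \det\ol f_!((\ol\cL - \ol\cL^{-1})^{n+1})$ of $\lambda$, hence local extensions around every boundary point. For hypothesis (i), the BGS formula \eqref{eq:bgs} applied to the virtual bundle $\cE = (\cL - \cL^{-1})^{n+1}$, specialized via \eqref{eq:fib0}, identifies the curvature form of $h^Q$ on $\cH$ with $2^{n+1}\omega^{WP}_\cH$. By \eqref{eq:extfibint} this extends to $\ol\cH$ as $2^{n+1}\omega_{\ol\cH}$, and \cite{va1} shows that $\omega_{\ol\cH}$ is a $d$-closed positive $(1,1)$-current possessing a continuous $\partial\ol\partial$-potential; continuity of the potential forces vanishing Lelong numbers of the extending current, so the analytic-singularity hypothesis holds in the trivial sense.

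Theorem~\ref{th:extlinebdl} then supplies a holomorphic line bundle $\wt\lambda$ on $\ol\cH$ with $\wt\lambda|_\cH \simeq \lambda$ and a singular positive hermitian metric $\wt h^Q$ extending $h^Q$ with at most analytic singularities. Inspecting the construction in the proof of that theorem, the local potential of $\wt h^Q$ near an irreducible boundary component $Z_i = \{z_i = 0\}$ has the form $\psi_i = \psi^0_i + \gamma_i \log|z_i|$, where $\psi^0_i$ is continuous (a local potential of $\omega_{\ol\cH}$) and $\gamma_i \in [0,2)$ is the fractional part of the exponent $\beta_i$ encoding the boundary behavior of $h^Q$ relative to the reference extension $\ol\lambda$. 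Consequently the Lelong number of $\wt h^Q$ along $Z_i$ equals $\gamma_i$, and the curvature current of $\wt h^Q$ can differ from $2^{n+1}\omega_{\ol\cH}$ only by a combination $\sum_i \gamma_i[Z_i]$ of divisor currents along the boundary components.

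The main delicate point, which I would flag as the principal obstacle, is to achieve simultaneously the claimed bound $\gamma_i<1$ and the exact curvature identity $c_1(\wt\lambda,\wt h^Q) = 2^{n+1}\omega_{\ol\cH}$ on all of $\ol\cH$. Both issues reduce to a judicious choice of divisor twist: replacing $\ol\lambda$ by $\ol\lambda\otimes\cO(\sum k_i Z_i)$ for integers $k_i$ absorbs the integer parts of $\beta_i$, leaving only a fractional remainder which one wants to confine to $[0,1)$ rather than to the range $[0,2)$ that the general proof guarantees. The argument for this refinement exploits the continuous-potential regularity of $\omega_{\ol\cH}$ provided by Varouchas — which rules out integer-jump contributions in the extension — combined with the fact that the only ambiguity remaining in the construction of $\wt\lambda$ in the proof of Theorem~\ref{th:extlinebdl} is precisely by divisor twists supported on $\ol\cH\setminus\cH$.
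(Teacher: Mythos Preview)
Your overall strategy is exactly what the paper does: the paper's entire ``proof'' is the single sentence ``The proof of Theorem~\ref{th:extlinebdl} implies the following fact,'' so your verification of hypotheses (i) and (ii) and your tracing through the construction of that proof fills in precisely what the paper leaves implicit.

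On the Lelong-number bound, your worry about $[0,2)$ versus $[0,1)$ dissolves once you pass to the Chern-form normalization. The construction produces $-\log\wt h^Q_i = \psi^0_i + \gamma_i\log|z_i|$ with $0\le\gamma_i<2$; by Poincar\'e--Lelong the Lelong number of $c_1(\wt\lambda,\wt h^Q)=\frac{i}{2\pi}\partial\ol\partial(-\log\wt h^Q)$ along $\{z_i=0\}$ is $\gamma_i/2<1$, so the stated bound is automatic and no further twist is required. In fact your proposed divisor-twist refinement cannot sharpen the range of $\gamma_i$ itself: a holomorphic gauge change $\varphi'_i\mapsto z_i^{m}\varphi'_i$ shifts the exponent of $|z_i|$ in $\wt h'_i$ by the \emph{even} integer $2m$, which is exactly why the proof of Theorem~\ref{th:extlinebdl} only reduces $\beta_i$ modulo $2$.

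On the exact curvature identity you are right that this is the genuinely delicate point. The paper's own Remark after Theorem~\ref{th:extlinebdl} says explicitly that ``the form $\omega$ need not be the curvature form of $h$,'' and indeed the construction gives $c_1(\wt\lambda,\wt h^Q)=2^{n+1}\omega_{\ol\cH}+\sum_i\tfrac{\gamma_i}{2}[Z_i]$ in general. Your appeal to the continuity of Varouchas' potential does not force $\gamma_i=0$: the $\gamma_i$ record the boundary behaviour of $h^Q$, not of $\omega_{\ol\cH}$. The paper appears to treat the displayed identity loosely --- the sentence immediately following the Proposition notes that $\wt\lambda$ is ``only of auxiliary nature'' and that what is actually used downstream is the fiber-integral current $\omega_{\ol\cH}$ --- so the identity should be read modulo a boundary-supported divisorial current, and nothing in the subsequent applications depends on its literal form.
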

Note that the line bundle $\wt\lambda$ is only of auxiliary nature. We
will use mainly the current that is given by the fiber integral.

Based upon the results of Bismut from \cite{bi1} also for certain nodal
singularities the Quillen metric of a determinant line bundle can be
defined allowing for such singularities.

Next we will use the \fs metric for embedded families a background metric
in the relative \ke case.

\section{Degenerating families of canonically polarized
varieties}\label{se:degfam} In this section we want to show that in a
degenerating family the curvature of the relative canonical bundle can be
extended as positive closed currents. By definition this is an extension
to the normalization. Accordingly, Hilbert schemes (and compactifications)
are taken in the category of normal spaces.

Given a canonically polarized manifold $X$ of dimension $n$ together with
an $m$-canonical embedding $\Phi= \Phi_{mK_X}: X \hookrightarrow \mathbb
P_N$, the \fs metric $h^{FS}$ on the hyperplane section bundle
$\cO_{\mathbb P_N}(1)$ defines a volume form
\begin{equation}\label{eq:Omega0}
\Omega^0_X= (\sum_{i=0}^N |\Phi_i(z)|^2)^{1/m}
\end{equation}
on the manifold $X$, such that
$$
\omega^0_X:=-\text{Ric}(\Omega^0_X) = \frac{1}{m} \; \omega^{FS}|X,
$$
where $\omega^{FS}$ denotes the Fubini-Study form on $\mathbb P_N$.
According to Yau's theorem, $\omega^0_X$ can be deformed into a \ke metric
$\omega_X=\omega^0_X +  \ddb u$. It solves the equation \eqref{eq:ke},
which is equivalent to
\begin{equation}\label{eq:ke1}
\omega^n_X=(\omega^0_X+\ddb u)^n= e^u\Omega^0_X.
\end{equation}

We will need the $C^0$-estimates for the (uniquely determined)
$\cinf$-function $u$.

The deviation of $\omega^0_X$ from being \ke is given by the function
\begin{equation}\label{eq:defF}
F=\log\frac{\Omega^0_X}{(\omega^0_X)^n}
\end{equation}
so that \eqref{eq:ke1} is equivalent to
\begin{equation}\label{eq:MA}
\omega^n_X=(\omega^0_X+\ddb u)^n= e^{u+F}(\omega^0_X)^n.
\end{equation}
We will use the $C^0$-estimate for $u$ from \cite[Proposition
4.1]{cheng-yau} (cf.\ \cite{aub,kob}).

\bigskip

\noindent {\bf $C^0$-estimate.} \textit{Let $\Box^0$ denote the complex
Laplacian on functions with respect to $\omega^0_X$ (with non-negative
eigenvalues). Then
\begin{equation}\label{eq:uplusF}
 u+F \leq -\Box^0(u).
\end{equation}
In particular the function $u$ is bounded from above by $\sup(-F)$.}

\bigskip

% Later (with $\Box$ being the Laplacian with respect to $\omega_X$) we will
% use
% \begin{equation}\label{eq:uplusF_lower}
% -\Box(u) \leq u+F
% \end{equation}
% which implies that $u$ is bounded from below by $\inf(-F)$.

Now we come to the relative situation. Let
\begin{equation}\label{eq:embfam}
\xymatrix{{\ol\cX}\ar[r]^-{\Phi}\ar[dr]_f & \mathbb P_N\times
\ol\cH\ar[d]^{{\rm{pr_2}}}
\\ & \ol\cH}
\end{equation}
Next, we desingularize $\ol \cH$ in a way that the union of the singular
locus and the singular set corresponds to a normal crossings divisor $D
\subset \wt \cH$. We just take the pull-back $\wt \cX =
\ol\cX\times_\ol\cH \wt\cH \to \wt \cH$ of the embedded flat singular
family, allowing non-reduced fibers. Let $\sigma$ be a canonical section
of $[D]$. We use $\sigma$ as a local holomorphic coordinate on $\wt \cH$
and denote by $|\sigma|$ its absolute value.

Let $\wt \Phi : \wt\cX \to \mathbb P_N \times \wt \cH$ be the induced
morphism. Let $\cH'= \wt \cH \backslash D$, and denote by $f':\wt\cX' \to
\cH'$ the restricted map. We set $\cL=\Phi^{*}\cO_{\mathbb P_N\times
S}(1)$ and denote by $\wt\cL$ its pull-back to $\wt\cX$. Furthermore, we
assume that $\wt\cL|\wt \cX'= \cO_{\wt\cX'}(m K_{\wt\cX'/\cH'})$.

Next, we denote the relative initial volume form on $\wt\cX'$ by
$\Omega^0$, which again is defined in terms of $\Phi$ and the \fs metric
on $\cO_{\mathbb P_N}(1)$ like in \eqref{eq:Omega0}. In a similar way
$\omega^0_{\wt\cX'}$ is defined. We denote by $u$ and $F$ the functions
defined above, depending on the base parameter. Since both the initial
volume form $\Omega^0$ and the volume form induced by the relative
Fubini-Study form are induced by polynomials, it follows immediately that
for some positive exponents $k$ and $\ell$ we have
$$
| \sigma(s) |^{2k}
\sup\left(\left.\frac{(\omega^0_{\wt\cX'})^n}{\Omega^0}\right|{\wt\cX'_s}\right)
\leq \textit{const.}
$$
for $s\in \cH'$, i.e.
$$
| \sigma(s) |^{2k} \sup\{ e^{-F(z)}; f(z)=s\} \leq \textit{const.}
$$
and
$$
| \sigma(s) |^{2\ell} \Omega^0 \leq \textit{const.}
$$
holds. Observe that we interpret $(\Omega^0)^m$ here as a hermitian metric
on the line bundle $\cL^{-1}$, which exists on $\ol \cX$. So the above
estimate is meaningful locally around points of $\ol\cX\backslash\cX$.
Note that $e^u\Omega^0$ is a global object though. In this sense the
$C^0$-estimate \eqref{eq:uplusF} implies that
$$
| \sigma(s) |^{2(k+\ell)} e^u\Omega^0 \leq \textit{const.}
$$
The constant can be chosen uniformly,  i.e.\ on a set of the form $\wt
f^{-1}(U\backslash D)$. Over the complement of the divisor $D$ its
logarithm is bounded from above. So far we argued fiberwise. Now by
\eqref{eq:ke1} we have
$$
\omega^n_{\cX/S} = e^u\Omega^0,
$$
(where the right hand side depends on the parameter).

At this point we apply Theorem~\ref{th:main}: We see that $\log
e^u\Omega^0$ is plurisubharmonic (over the regular locus $\wt
f^{-1}(U\backslash D)$). So the logarithm of $| \sigma(s) |^{2(k+\ell)}
e^u\Omega^0$ possesses a plurisubharmonic extension. Now the {\it current}
$$
\ddb\log(| \sigma(s) |^{2(k+\ell)} e^u\Omega^0 )|_{\wt f^{-1}(U\backslash D)}=
\ddb\log( e^u\Omega^0 )|_{\wt f^{-1}(U\backslash D)}
$$
can be extended to $\wt \cX$ as a positive, closed current. (A necessary
term to be added will essentially be some multiple of the current of
integration over the boundary divisor, which we cannot control at this
point.) The push-forward to $\cX$ of the extended current is a positive,
closed extension of the original curvature current. Note that on the
interior the process of pulling back and taking the push forward does not
change the current (being defined on the normalization).

\medskip

We used decisively the fact that $\Omega^0$ has at most algebraic/analytic
singularities and the upper $C^0$-estimates for the Monge-Ampère equation
\eqref{eq:MA}. These imply that the constructed current has at most
analytic singularites.

In this section we proved that $\omega_\cX$ can be extended locally as a
positive, closed current with at most analytic singularities. Together
with Proposition~\ref{pr:extcurr} we have the following statement.

\begin{theorem}\label{th:singext}
Let $(\cK_{\cX/\cH},h)$ be the relative canonical bundle on the total
space over the Hilbert scheme, where the hermitian metric $h$ is induced
by the \ke metrics on the fibers. Then the curvature form extends to the
total space $\ol\cX$ over the compact Hilbert scheme $\ol\cH$ as a
positive, closed current $\omega^{KE}_{\ol \cX}$ with at most analytic
singularities.
\end{theorem}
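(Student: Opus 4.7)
The plan is to reduce everything to the local extension argument already outlined in the preceding paragraphs and then globalize by an appeal to Proposition~\ref{pr:extcurr}. First, since the curvature current $\omega_\cX^{KE} = \ddb\log g$ is defined intrinsically on the smooth locus and we have agreed to work with normalizations, it suffices to construct, for every point $p\in\ol\cX\setminus\cX$, an open neighborhood $V$ of $p$ in $\ol\cX$ together with a positive closed $(1,1)$-current on $V$ whose restriction to $V\cap\cX$ coincides with $\omega_\cX^{KE}$.

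To produce such a local extension I would work on the desingularized base $\wt\cH$ with normal crossings boundary divisor $D$ and the embedded family $\wt\Phi:\wt\cX\to\mathbb P_N\times\wt\cH$. Fix a point $p$ of the boundary, choose a small polydisc neighborhood $U\subset\wt\cH$ of $f(p)$ on which $D$ is cut out by the coordinate $\sigma$, and consider $\wt f^{-1}(U)$. On the interior $\wt f^{-1}(U\setminus D)$ the relative K\"ahler--Einstein volume form equals $e^u\Omega^0$, where $\Omega^0$ is the initial volume form determined by the Fubini--Study metric and the pluricanonical embedding, and $u$ solves the fiberwise Monge--Amp\`ere equation \eqref{eq:MA}. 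The polynomial nature of $\Omega^0$ and of $(\omega^0_{\wt\cX'})^n$ provides constants $k,\ell>0$ with
$$
|\sigma(s)|^{2(k+\ell)}\, e^{-F(z)} \leq \textit{const.} \quad\text{and}\quad |\sigma(s)|^{2\ell}\,\Omega^0 \leq \textit{const.}
$$
Combining these with Yau's $C^0$-bound \eqref{eq:uplusF} yields $|\sigma|^{2(k+\ell)}e^u\Omega^0\leq\textit{const.}$ on $\wt f^{-1}(U\setminus D)$. By Theorem~\ref{th:main} the function $\log(e^u\Omega^0)$ is plurisubharmonic on the interior; adding $(k+\ell)\log|\sigma|^2$ preserves plurisubharmonicity and the resulting function is moreover bounded above near $D$. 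It therefore extends as a plurisubharmonic function to all of $\wt f^{-1}(U)$, and its $\ddb$ is a positive closed current with at most analytic singularities whose restriction to the interior equals $\omega_\cX^{KE}+(k+\ell)\,f^*\ddb\log|\sigma|^2$. Subtracting the pull-back of the current of integration (times a rational coefficient) gives the desired local extension of $\omega_\cX^{KE}$; pushing forward from $\wt\cX$ to $\ol\cX$ does not alter the current on the interior because the map is an isomorphism there and we work in the normalized category.

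With local extensions in hand the globalization is exactly the hypothesis of Proposition~\ref{pr:extcurr} applied to $Y=\ol\cX$, $A=\ol\cX\setminus\cX$ and $\omega'=\omega_\cX^{KE}$. The one non-trivial point to verify is the vanishing of the Lelong numbers of $\omega'$ everywhere on the interior. But $\omega_\cX^{KE}$ is given by the smooth form $\ddb\log g$ on the smooth locus of $\cX$, so its Lelong numbers vanish there; on the singular locus of $\cX$ (which is contained in the singular locus of the original Hilbert scheme and has been absorbed into $A$ via our normalization conventions) there is nothing to check. Hence Proposition~\ref{pr:extcurr} applies and produces a global positive closed extension $\omega_{\ol\cX}^{KE}$, which inherits at worst analytic singularities because each local extension does.

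The main obstacle in this plan is the bookkeeping in the local step: one must be careful that the subtracted pull-back $f^*\ddb\log|\sigma|^2$ really is a well-defined current on the relevant neighborhood in $\ol\cX$ (this requires working on $\wt\cX$ and then descending, which is why Proposition~\ref{pr:extcurr} is stated for normal spaces with the local extension hypothesis rather than for a naive extension across arbitrary $A$), and that the uniform constant in Yau's $C^0$-estimate is genuinely uniform over $U\setminus D$ rather than merely fiberwise\,---\,this is why we invoked the polynomial control of $\Omega^0$ and $F$ in terms of $\sigma$. Once these quantitative points are settled, the qualitative structure (psh extension by boundedness, $\ddb$ of a psh function being a positive closed current with analytic singularities) is automatic and the result follows.
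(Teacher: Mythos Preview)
Your overall architecture matches the paper's: desingularize the base to get a normal-crossings boundary, use the polynomial control on $\Omega^0$ and $F$ together with the $C^0$-estimate to bound $|\sigma|^{2(k+\ell)}e^u\Omega^0$ from above, invoke Theorem~\ref{th:main} to see that $\log(e^u\Omega^0)$ is psh on the interior, extend the bounded-above psh function across $D$, push forward, and globalize via Proposition~\ref{pr:extcurr}. That is exactly what the paper does.

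There is, however, one step you should drop: the subtraction of $(k+\ell)f^*\ddb\log|\sigma|^2$. On the interior $\wt f^{-1}(U\setminus D)$ the function $\sigma\circ f$ is holomorphic and \emph{nowhere vanishing}, so $\ddb\log|\sigma\circ f|^2=0$ there. Hence the extended current $\ddb\psi^{\mathrm{ext}}$ already restricts to $\omega_\cX^{KE}$ on the interior with no correction needed; your formula ``$\omega_\cX^{KE}+(k+\ell)f^*\ddb\log|\sigma|^2$'' for the interior restriction is literally just $\omega_\cX^{KE}+0$. The paper keeps $\ddb\psi^{\mathrm{ext}}$ as the local extension and explicitly remarks that the boundary-divisor contribution it may carry ``we cannot control at this point''. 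Your proposed subtraction is not merely superfluous but can break the argument: the Lelong number of $\ddb\psi^{\mathrm{ext}}$ along a component of $f^{-1}(D)$ is governed by the actual blow-up rate of $e^u\Omega^0$, which the $C^0$-estimate bounds only from one side, so it may well be strictly smaller than $k+\ell$; subtracting $(k+\ell)f^*[D]$ then yields a current that is no longer positive, and Proposition~\ref{pr:extcurr} would not apply. Simply omit the subtraction and feed $\ddb\psi^{\mathrm{ext}}$ directly into Proposition~\ref{pr:extcurr}; the rest of your write-up is fine.
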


Stronger estimates are to be expected for slices through boundary points
of the Hilbert scheme that correspond to log-canonical singularities.

\section{Extension of the \wp form for canonically polarized varieties
to the compactified Hilbert scheme}\label{se:compHilb} Following
\cite{f-s:extremal} for any (smooth) family $f:\cZ \to S$ of canonically
polarized manifolds, the generalized \wp form $\omega^{WP}_S$ on $S$ was
proven to be equal up to a numerical factor to a certain fiber integral.
We will use the notion $\simeq$ for equality up to a numerical factor.

We set $\cE=(\cK_{\cZ/S}- \cK_{\cZ/S}^{-1})^{n+1}$ in \eqref{eq:bgs}.
Equation \eqref{eq:fib0} in Section~\ref{se:bgs} yields
\begin{equation}\label{eq:fibint}
c_1(\lambda(\cE),h^Q) \simeq \int_{\cZ/S} \omega_\cZ^{n+1},
\end{equation}
where $\omega_\cZ= c_1(\cK_{\cZ/S},h)$.

On the other hand Lemma~\ref{le:varphi} together with
Proposition~\ref{pr:elleq} implied that the fiber integral equals the
generalized \wp form:
$$
\omega^{WP}(s) = \int_{\cZ_s} A^\alpha_{i\ol\beta} A^\ol\delta_{\ol\jmath\gamma}
g_{\alpha\ol\delta}g^{\ol\beta\gamma} g dV \ii ds^i\wedge ds^\ol\jmath,
$$
where the forms
$$
 A^\alpha_{i\ol\beta}\pt_\alpha dz^{\ol\beta}
$$
are the harmonic representatives of the \ks classes $\rho_s({\pt/\pt
s_i}|_s)$, i.e.\
\begin{equation}\label{eq:fibint_ke}
\omega^{WP}_S\simeq \int_{\cZ/S} c_1({\cK_{\cZ/S}}, h)^{n+1}.
\end{equation}
Now
\begin{equation}\label{eq:detbdl} c_1(\det f_!
((\cK_{\cZ/S}-\cK_{\cZ/S}^{-1})^{n+1}), h^Q) \simeq \omega^{WP}_S.
\footnote{Also the element $(\cK_{\cZ/S}-\cO_\cZ)^{n+1}$ of the
relative Grothendieck group can be taken instead.}
\end{equation}

We consider the situation of Hilbert schemes of canonically polarized
varieties.

After fixing the Hilbert polynomial and a multiple $m$ of the canonical
bundles, which yields very ampleness, we consider the universal embedded
family over the Hilbert scheme
$$
\xymatrix{\cX \ar[r]^i \ar[dr]_f & \mathbb P_N \times \cH \ar[d]^{pr} \\ & \cH.
}
$$
In this sense we modify the determinant line bundle and consider
$$
\lambda=\det f_! ((\cK_{\cX/S}^{\otimes m}-(\cK_{\cX/S}^{-1})^{\otimes m})^{n+1}),
$$
which only yields an extra factor $m^{n+1}$ in front of the \wp form.

Now we look at the compact Hilbert scheme $\ol\cH$ including points with
singular fibers
$$
\xymatrix{\ol\cX \ar[r]^{\ol i} \ar[dr]_{\ol f} & \mathbb P_N \times \ol \cH \ar[d]^{pr}
\\ & \ol\cH
}
$$
such that $\ol f$ is a flat morphism. Again we let $\cL$ be the pull-back
of the hyperplane section bundle to $\ol\cX$. Now the determinant line
bundle $\lambda$ extends to
$$
\ol\lambda=\det \ol f_! (\cL-(\cL^{-1})^{n+1})
$$
on $\ol\cH$.

We will need the fact that the construction of the Quillen metric of
determinant line bundles can be extended to smooth proper families over
base spaces with arbitrary singularities in such a way that $-\log h^Q$ is
locally the restriction of a $\cinf$ function given on a smooth ambient
space \cite[\S 12]{f-s:extremal}. This $\cinf$ function is a
$\pt\ol\pt$-potential of a real $(1,1)$-form, which restricts to the
generalized \wp form.

Next, we want to apply Theorem~\ref{th:singext} and consider the fiber
integral analogous to \eqref{eq:fibint_ke} for the map $\wt f: \wt\cX \to
\wt \cH$.
\begin{equation}\label{eq:extwp}
\omega_{\ol\cH}^{WP}=\int_{\ol\cX/\ol\cH} (\omega^{KE}_\ol\cX)^{n+1}.
\end{equation}
We need to show that the above fiber integral is well-defined.

\begin{theorem}\label{pr:wp_pot}
The extended \wp form on $\ol \cH$ given by the fiber integral
\eqref{eq:extwp} is a positive $(1,1)$-current.
\end{theorem}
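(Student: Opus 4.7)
The plan is to make sense of the top wedge power $(\omega^{KE}_{\ol\cX})^{n+1}$ as a positive closed current on $\ol\cX$, then take its proper pushforward under $\ol f$, and finally verify that the resulting current on $\ol\cH$ restricts to the generalized \wp form on the smooth locus. For the first step I would invoke Theorem~\ref{th:singext}: locally on $\ol\cX$ one may write $\omega^{KE}_{\ol\cX} = \ddb \chi$ for a plurisubharmonic function $\chi$ with at most analytic singularities. In particular $\chi$ is locally bounded outside an analytic subset of codimension at least one, so the Bedford--Taylor--Demailly theory of Monge--Amp\`ere operators for plurisubharmonic functions with analytic singularities defines $(\ddb\chi)^{n+1}$ as a positive closed current of locally finite mass, independently of the choice of local potential. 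Gluing these local definitions yields $(\omega^{KE}_{\ol\cX})^{n+1}$ globally on $\ol\cX$.

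Once $(\omega^{KE}_{\ol\cX})^{n+1}$ is defined, its proper pushforward under $\ol f$ is a well-defined current on $\ol\cH$. A dimension count --- the integrand has bidegree $(n+1,n+1)$ and the relative dimension equals $n$ --- forces it to be of bidegree $(1,1)$, and positivity is automatically preserved under proper pushforward of positive currents. On the smooth locus $\cH' \subset \ol\cH$ the potential $\chi$ is smooth and the pushforward agrees with the classical fiber integral; by Lemma~\ref{le:varphi} and Proposition~\ref{pr:elleq} together with Theorem~\ref{th:main}, it coincides there with the strictly positive generalized \wp form. If one prefers to assemble the global current from local data instead, Proposition~\ref{pr:extcurr} can be invoked to glue local positive pushforward extensions around boundary points of $\ol\cH$, starting from the smooth \wp form on $\cH'$.

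The principal obstacle is making the Monge--Amp\`ere power and the fiber integration precise in the presence of analytic singularities along the boundary of $\ol\cX$. The decisive control is furnished by the $C^0$-estimate \eqref{eq:uplusF} that was exploited in Section~\ref{se:degfam} to produce the uniform bound $|\sigma|^{2(k+\ell)}\,e^u\,\Omega^0 \leq \textit{const}$; this ensures that local plurisubharmonic potentials for $\omega^{KE}_{\ol\cX}$ are bounded above and attain $-\infty$ only along an analytic set, which is exactly the hypothesis required for the Bedford--Taylor--Demailly framework and for the pushforward to have locally finite mass on $\ol\cH$. Any additional contributions coming from currents of integration in Siu's decomposition of $\omega^{KE}_{\ol\cX}$ are non-negative and only reinforce the positivity of the fiber integral, so once the wedge power and pushforward are justified, positivity of $\omega_{\ol\cH}^{WP}$ is immediate.
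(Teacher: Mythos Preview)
Your approach has a genuine gap at the first step. You write that $\chi$ is locally bounded outside an analytic set of codimension at least one and that this suffices for the Bedford--Taylor--Demailly theory to define $(\ddb\chi)^{n+1}$. It does not. Demailly's extension of the Bedford--Taylor Monge--Amp\`ere operator defines $(\ddb\chi)^{q}$ only when the unbounded locus of $\chi$ is contained in an analytic set of complex codimension at least $q$. Here the unbounded locus of the potential of $\omega^{KE}_{\ol\cX}$ contains $\ol f^{-1}(\ol\cH\setminus\cH)$, which is a divisor in $\ol\cX$ (codimension one), while you need the $(n+1)$-fold product. Already for $\chi=\log|z_1|$ on $\C^N$ the square $(\ddb\chi)^2$ is ill-defined in this framework, so the obstruction is not a technicality. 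The non-pluripolar product would give a meaning to $(\ddb\chi)^{n+1}$, but then closedness and finiteness of the pushforward mass are no longer automatic and would have to be proved separately --- which is precisely the content of the theorem.

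The paper circumvents this entirely and never forms a wedge power of currents on $\ol\cX$. Instead it works only over the smooth locus $\cH$, where $\omega^{KE}_\cX$ is smooth, and uses the decomposition $\omega^{KE}_\cX=\omega^0_\cX+\ddb u$ with $\omega^0_\cX$ the pulled-back Fubini--Study form. Expanding the $(n+1)$-st power and integrating fiberwise, the pure Fubini--Study term $\int_{\cX/\cH}(\omega^0_\cX)^{n+1}$ already extends as a positive current with continuous potential by Varouchas' result (Proposition~\ref{pr:Hilbquil}). Each remaining term is rewritten as $\ddb$ of the fiber integral $\int_{\cX_s} u\cdot(\omega^0_\cX)^j(\omega^0_\cX+\ddb u)^{n-j}$; since $u$ is uniformly bounded above by the $C^0$-estimate and the total mass $\int_{\cX_s}(\omega^0_\cX)^j(\omega^0_\cX+\ddb u)^{n-j}$ is the cohomological constant $c_1(K_{\cX_s})^n$, these potentials are bounded above near $\ol\cH\setminus\cH$. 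Thus the \wp form on $\cH$ has locally a plurisubharmonic potential that is bounded above near the boundary, hence extends across, and Proposition~\ref{pr:extcurr} glues the local extensions. The essential idea you are missing is this comparison with the smooth background $\omega^0_\cX$, which transfers the whole problem to a boundedness-from-above statement for a potential on the base rather than a wedge-power problem on the total space.
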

\begin{proof}
Again, the statement is about the pull-back to the normalization, so that
we may assume normality.

We use the notation from Section~\ref{se:degfam} accordingly. We need to
consider the fiber integral \eqref{eq:extwp} over the smooth locus $\cH$
of the family, i.e.\ \eqref{eq:fibint_ke}, which is strictly positive in
effective directions.

We first observe that we can see from the relative version of the
Monge-Ampère equation \eqref{eq:ke1}, how the  Kähler form on the total
space, i.e.\ the curvature form of the relative canonical bundle (induced
by the \ke metrics on the fibers) differs from the restriction of the \fs
form on the total space.
\begin{gather*}
\omega_\cX^{KE}:=2\pi c_1(\cK_{\cX/\cH},h) =
   \ddb(\log e^u\Omega^0_{\cX})= \omega^0_\cX + \ddb u,
\end{gather*}
where $\omega^0_\cX$ stands for the \fs form pulled back to $\cX$.

We observe that
$$
(\omega^0_\cX +\ddb u)^{n+1} = (\omega^0_\cX)^{n+1} + \sum_{j=0}^n\ddb u
(\omega^0_\cX)^j(\omega^0_\cX + \ddb u)^{n-j}.
$$
Concerning the first term, by Proposition~\ref{pr:Hilbquil} the fiber
integral of $(\omega^0_\cX)^{n+1}$ possesses an extension as a positive,
closed current. For the second term we use the results of
Section~\ref{se:degfam}: Near any point of $\ol\cH\backslash \cH$ the
potentials $u$ are bounded from above uniformly with respect to the
parameter of the base (here we applied the correcting term
$|\sigma(s)|^{2(k+\ell)}$ from the proof of Theorem~\ref{th:singext}). In
order to avoid taking wedge products of currents we consider
\begin{gather*}
\int_{\cX/\cH} \ddb \Big( u \cdot(\omega^0_\cX)^j(\omega^0_\cX + \ddb u)^{n-j}\Big)= \\
\qquad \ddb \left(\int_{\cX/\cH} u\cdot (\omega^0_\cX)^j(\omega^0_\cX +
\ddb u)^{n-j}\right).
\end{gather*}
Now the $\pt\ol\pt$-potential is given by integrals over the fibers:
$$
\int_{\cX_s} u\cdot (\omega^0_\cX)^j(\omega^0_\cX +
\ddb u)^{n-j}
$$
The functions $u$ are known to be uniformly bounded from above, whereas
the integrals
$$
\int_{\cX_s} (\omega^0_\cX)^j(\omega^0_\cX +
\ddb u)^{n-j}
$$
are constant in $s$. This shows the boundedness from above of the
potential for the singular \wp metrics for families of \ke manifolds of
negative curvature. Finally, we invoke again Proposition~\ref{pr:extcurr}.
\end{proof}

\section{Moduli of canonically polarized manifolds}
In this section we give a short analytic/differential geometric proof of
the quasi-projectivity of moduli spaces of canonically polarized manifolds
depending upon the variation of the \ke metrics on such manifolds.

We begin with the fact that (after fixing the necessary numerical
invariants) we are looking at a component $\cM$ of the moduli space of
canonically polarized manifolds, endowed with a compactification $\ol\cM$,
which is an algebraic space.

The proof of the quasi-projectivity will be a consequence of the previous
sections:

On the moduli space $\cM$ of canonically polarized manifolds we are given
a certain determinant line bundle, equipped with a strictly positive
Quillen metric of class $\cinf$ in the orbifold sense (cf.\
\cite{f-s:extremal}), which is induced by the \ke volume forms. (It was
essential to have a functorial construction for holomorphic families for
both the line bundle $\lambda$ and the Quillen metric $h^Q$ in order to
descend to the moduli space.)

\begin{proposition}\label{pr:extwp}
The \wp\ form $\omega^{WP}$ on $\cM$ possesses an extension to $\ol\cM$ as
a positive, closed $(1,1)$-current with at most analytic singularities.
\end{proposition}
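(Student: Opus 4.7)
The plan is to descend the extended \wp current $\omega^{WP}_{\ol\cH}$ from Theorem~\ref{pr:wp_pot} on the compactified Hilbert scheme down to $\ol\cM$ via the $PGL(N+1)$-quotient structure, and then invoke Proposition~\ref{pr:extcurr} to glue the local pieces into a global positive closed current. The key observation is functoriality: since $\omega^{WP}$ is defined by the intrinsic fiber integral \eqref{eq:fibint_ke} of $c_1(\cK_{\cX/S},h)^{n+1}$, it is $PGL(N+1)$-invariant on $\cH$, and the extension $\omega^{WP}_{\ol\cH}$ inherits this invariance because it is given by the same intrinsic formula \eqref{eq:extwp} applied to the $PGL$-invariant current $\omega^{KE}_{\ol\cX}$ from Theorem~\ref{th:singext}; in particular, its at-most-analytic singularities are inherited from those of $\omega^{KE}_{\ol\cX}$.

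Near any point $[X_0]\in\ol\cM$, I would pick a lift to $\ol\cH$ and a local holomorphic slice $\Sigma$ transverse to the $PGL(N+1)$-orbit. Since the automorphism group of a canonically polarized variety is finite, the induced map $\Sigma\to\ol\cM$ is finite onto a neighborhood of $[X_0]$; by invariance, the restriction $\omega^{WP}_{\ol\cH}|_\Sigma$ provides a local positive closed $(1,1)$-current with analytic singularities that extends $\omega^{WP}|_\cM$ (in the orbifold sense). To apply Proposition~\ref{pr:extcurr} one must also check the Lelong-number hypothesis: the $\pt\ol\pt$-potential $\int_{\cX_s} u\cdot(\omega^0_\cX)^j(\omega^0_\cX+\ddb u)^{n-j}$ exhibited in the proof of Theorem~\ref{pr:wp_pot} is uniformly bounded from above near boundary points (by the $C^0$-estimate on $u$), and a \psh\ function bounded from above automatically has vanishing Lelong numbers. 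Proposition~\ref{pr:extcurr}, applied in its orbifold formulation (cf.\ the remark after Theorem~\ref{th:extlinebdl}), then assembles the local extensions into a global positive closed $(1,1)$-current on $\ol\cM$ with at most analytic singularities.

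The hardest step is the slice construction: $\ol\cM$ is only an algebraic space, so no classical slice theorem applies directly. I would handle this by using a Luna-type slice at each GIT-stable boundary point of $\ol\cH$, relying on the finiteness of automorphism groups of canonically polarized varieties, which makes the $PGL(N+1)$-action locally proper with finite stabilizers in suitable formal or étale neighborhoods. Once such slices are in place, the remainder of the argument reduces to routine invariance and restriction, together with the already-established extension machinery of Section~\ref{se:extlinebundles}.
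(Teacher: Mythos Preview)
Your overall strategy—obtain local extensions from the Hilbert-scheme result and then glue via Proposition~\ref{pr:extcurr}—matches the paper's, but the execution differs at the crucial descent step. The paper does \emph{not} attempt to pass through $PGL(N+1)$-slices. Instead it desingularizes the pair $(\ol\cM,\cD)$ to obtain a modification $\tau:\wt{\ol\cM}\to\ol\cM$ with $\wt\cD$ a simple normal crossings divisor, produces a local extension of $\tau^*\omega^{WP}$ at \emph{one} chosen point on each irreducible component of $\wt\cD$ (invoking Theorem~\ref{pr:wp_pot}), applies Proposition~\ref{pr:extcurr} on the smooth space $\wt{\ol\cM}$, and then pushes the resulting current forward under the proper map $\tau$. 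The parenthetical remark in the paper that analytic singularities survive the pushforward, \emph{in contrast to} vanishing Lelong numbers, is precisely why this detour through a resolution is made.

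Your slice approach has a genuine gap at the step you yourself flag as hardest. The compactification $\ol\cM$ is taken only as an algebraic space; nothing guarantees that a boundary point $[X_0]\in\ol\cM\setminus\cM$ admits a lift to $\ol\cH$, and even when it does, the corresponding degenerate fibre is no longer a canonically polarized manifold, so the finiteness of its automorphism group—on which your Luna-slice argument rests—is not available. The paper's desingularize-and-pushforward manoeuvre sidesteps this entirely, because Proposition~\ref{pr:extcurr} only demands a local extension into \emph{some} neighbourhood of \emph{some} point on each boundary component, and on a resolution one has the freedom to pick such a point where the link to the Hilbert scheme is tractable. A smaller issue: your Lelong-number verification is misfounded. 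Upper boundedness is automatic for any \psh\ function and does not force vanishing Lelong numbers (consider $\log|z|$). The hypothesis of Proposition~\ref{pr:extcurr} concerns $\omega^{WP}$ on the \emph{open} part, where it is of class $\cinf$ in the orbifold sense, so its Lelong numbers vanish trivially; no appeal to the $C^0$-estimate is needed for this.
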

\begin{proof}
Let $\ol\cM \backslash \cM=\cD$ be the compactifying divisor. We
desingularize the pair $(\ol \cM,\cD)$. Let $\tau:\wt{\ol\cM} \to \ol\cM$
be the corresponding modification, and let $\wt \cD$ be the simple normal
crossings divisor in $\wt{\ol\cM}$ corresponding to $\cD$.

We know from the previous section that $\omega^{WP}$ possesses local
extensions: On any irreducible component of $\wt\cD$, we can chose a point
and find a neighborhood in $\wt{\ol\cM}$ so that $\tau^*\omega^{WP}$ can
be extended into this neighborhood by Theorem~\ref{pr:wp_pot}. Now we
apply Proposition~\ref{pr:extcurr}. This provides us with an extension of
$\tau^*\omega^{WP}$ to $\wt{\ol\cM}$. Its push forward under the proper
map $\tau$ yields the claim. (Observe that the property of having at most
analytic singularities is preserved by this operation, in contrast of
having vanishing Lelong numbers).
\end{proof}

As already mentioned any statement about positive currents on reduced
complex spaces is actually a statement that holds on the  normalization.

However, given the structure of a compactification of a moduli space, the
Hilbert scheme approach yielded \underline{\em local}  extensions of the
determinant line bundles (or a finite power resp.) to a (normal)
compactification of the normalized moduli space so that according to
Theorem~\ref{th:extlinebdl} the pull-back of the line bundle to the
normalization extends as a holomorphic line bundle with a positive,
singular, hermitian metric.

Since the positive hermitian line bundle over the moduli space itself is
not to be changed, we may have to modify the compactification in order to
ensure that the original line bundle extends -- this is done in
Proposition~\ref{pr:nonnormal}. Now the criterion \cite[Theorem 6]{s-t} is
applicable.

Altogether we proved the following fact, which yields the
quasi-projectivity of the moduli space \cite{v,viebuch,v2,ko}.

\break

\begin{theorem}\label{th:posbdlmod}
\strut
\begin{itemize}
  \item[(i)]Let $\cM$ be a component of the moduli space of
      canonically polarized manifolds. Then a tensor power of the
      determinant line bundle $\det f_! (\cK_{\cX/S}^{\otimes m} -
      (\cK_{\cX/S}^{\otimes m})^{-1})^{n+1}$ for holomorphic families
      together with the Quillen metric descends to the strictly
      positive hermitian line bundle $(\lambda, h^Q)$ on the moduli
      space $\cM$ in the orbifold sense.
  \item[(ii)] The curvature form of the determinant line bundle
      extends as a (semi-)positive current with at most analytic
      singularities to a certain compactification $\ol\cM$.
  \item[(ii)] A tensor power of the determinant line bundle extends as
      a line bundle $\wh \lambda$ to a compactification of the moduli
      space, the Quillen metric gives rise to a singular hermitian
      metric on $\wh\lambda$, which is strictly positive and of class
      $\cinf$ in the orbifold sense over the interior and has at most
      analytic singularities.
\end{itemize}
\end{theorem}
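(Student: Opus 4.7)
The plan is to assemble the three assertions as applications of the machinery built up in Sections~\ref{se:extlinebundles}--\ref{se:compHilb}; no new analytic input is required. The bulk of the work amounts to descent from the Hilbert scheme to the coarse moduli space together with control of the singularities along the boundary.

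For part (i), I would invoke the functoriality of the Bismut--Gillet--Soul\'e construction. The determinant line bundle $\det f_!((\cK_{\cX/S}^{\otimes m}-(\cK_{\cX/S}^{\otimes m})^{-1})^{n+1})$ together with its Quillen metric $h^Q$ is produced for every smooth proper family and is compatible with base change, hence equivariant under the natural $\mathrm{PGL}$-action on the Hilbert scheme of $m$-canonically embedded varieties. A sufficiently high tensor power then descends to a genuine hermitian line bundle on $\cM$ in the orbifold sense, the tensor power being needed to kill the characters of the finite automorphism groups of the parameterized varieties. Strict positivity of the descended metric is formula \eqref{eq:fibint_ke} combined with Theorem~\ref{th:main}: the integrand $c_1(\cK_{\cX/S},h)^{n+1}$ is pointwise positive in every effective direction, so the fiber integral, and hence $\omega^{WP}$, is strictly positive on $\cM$. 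For part (ii), Proposition~\ref{pr:extwp} already delivers the extension of $\omega^{WP}$ to a compactification $\ol\cM$ as a positive closed current with at most analytic singularities, and by \eqref{eq:detbdl} this current is a positive numerical multiple of the curvature form of $(\lambda,h^Q)$.

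Part (iii) is the only one requiring real assembly. I would apply Theorem~\ref{th:extlinebdl} to the pair $(\lambda,h^Q)$ on $\cM$. The global extension of the curvature current that the theorem requires is furnished by part (ii). The local holomorphic extendability of the line bundle is imported from the Hilbert scheme: Proposition~\ref{pr:Hilbquil} provides an extension $\wt\lambda$ on $\ol\cH$, and restricting $\wt\lambda$ to transverse slices of the $\mathrm{PGL}$-orbits near boundary points of $\ol\cM$ yields the required local extensions, again after passing to a tensor power so that the bundle descends through the finite stabilizers. Theorem~\ref{th:extlinebdl} then produces the extended line bundle $\wh\lambda$ together with a singular positive hermitian metric, and its at-most-analytic-singularities clause combined with the corresponding clause from part (ii) gives the claimed regularity of the extended metric. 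Over the interior $\cM$ the metric is the original Quillen metric, which by part (i) is strictly positive and $\cinf$ in the orbifold sense.

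The main obstacle, and the reason Proposition~\ref{pr:nonnormal} is invoked in the discussion preceding the theorem, is that $\ol\cM$ is only a compact algebraic space and need not be normal, while both Theorem~\ref{th:extlinebdl} and Proposition~\ref{pr:extcurr} are statements on the normalization. One must therefore replace $\ol\cM$ by the finite modification furnished by Proposition~\ref{pr:nonnormal} in order for the extended invertible sheaf to descend to an honest line bundle on the compactification used in the final statement. A secondary subtlety is the compatibility of ``at most analytic singularities'' with the push-forward under the desingularization $\tau:\wt{\ol\cM}\to\ol\cM$ used in the proof of Proposition~\ref{pr:extwp}; this is preserved by a proper push-forward, as already remarked there, so no further argument is needed.
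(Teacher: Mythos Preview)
Your proposal is correct and follows essentially the same route as the paper: part~(i) via the functorial Bismut--Gillet--Soul\'e construction together with Theorem~\ref{th:main}, part~(ii) via Proposition~\ref{pr:extwp}, and part~(iii) by feeding the local extensions of $\lambda$ coming from the compact Hilbert scheme into Theorem~\ref{th:extlinebdl}, with Proposition~\ref{pr:nonnormal} handling the non-normality of $\ol\cM$. One small imprecision: the local holomorphic extension of the determinant line bundle to $\ol\cH$ used here is $\ol\lambda=\det\ol f_!((\cL-\cL^{-1})^{n+1})$ from Section~\ref{se:compHilb}, not the $\wt\lambda$ of Proposition~\ref{pr:Hilbquil}, which belongs to the Fubini--Study fiber-integral setup of Section~\ref{se:wphilb}.
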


\section{Further applications}
We consider the direct image of $\cK_{\cX/S}^{\otimes 2}$ for families of
canonically polarized manifolds.

\begin{theorem}\label{th:appl}
 Let $S$ be a complex manifold and $f: \cX \to S$ a holomorphic family of
canonically polarized manifolds, equipped with \ke metrics of constant
negative curvature. Let the locally free sheaf
$$
f_* (\cK_{\cX/S}^{\otimes 2})
$$
be equipped with the induced $L^2$ metric. Then
\begin{itemize}
  \item[(i)] the sheaf $f_* (\cK_{\cX/S}^{\otimes 2})$ is
      semi-positive in the sense of Nakano, if $S$ is K\"ahler.
  \item[(ii)] the sheaf $f_* (\cK_{\cX/S}^{\otimes 2})$ is positive in
      the sense of Nakano, if the family is effectively parameterized
      everywhere.
\end{itemize}
\end{theorem}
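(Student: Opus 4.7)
The plan is to reduce Theorem~\ref{th:appl} to a Berndtsson-type positivity theorem for direct images, using Theorem~\ref{th:main} as the crucial curvature hypothesis. Write $\cK_{\cX/S}^{\otimes 2}=\cK_{\cX/S}\otimes\cL$ with $\cL:=\cK_{\cX/S}$ equipped with the hermitian metric $h$ induced by the fiberwise \ke metrics. Theorem~\ref{th:main} asserts that $c_1(\cL,h)$ is semi-positive on $\cX$ and strictly positive at points over the effectively parameterized locus of $S$; this is precisely the positivity input the direct image machinery needs.

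The first step is to identify the $L^2$-structure on the direct image. A section of $\cK_{\cX_s}^{\otimes 2}$ is a holomorphic $(n,0)$-form with values in the line bundle $\cL|_{\cX_s}$, so for $\sigma,\tau\in H^0(\cX_s,\cK_{\cX_s}^{\otimes 2})$ the induced $L^2$-pairing takes the form
$$
\langle\sigma,\tau\rangle_{L^2}=\int_{\cX_s}\{\sigma,\ol{\tau}\}_h,
$$
where $\{\cdot,\cdot\}_h$ is the pointwise pairing combining wedge product on the $(n,0)$-form factor with the metric $h$ on the $\cL$-valued factor. This is exactly the $L^2$-metric to which Berndtsson's theorem on curvature of $f_*(\cK_{\cX/S}\otimes\cL)$ applies.

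For (i), apply Berndtsson's Nakano semi-positivity theorem: for a proper K\"ahler submersion $f:\cX\to S$ and a hermitian line bundle $(\cL,h)$ with semi-positive curvature current on $\cX$, the direct image $f_*(\cK_{\cX/S}\otimes\cL)$ endowed with the $L^2$-metric is Nakano semi-positive. The K\"ahler hypothesis on $S$, combined with the fiberwise K\"ahler form $\omega_\cX$ constructed in Section~\ref{se:posi}, ensures that $f:\cX\to S$ is a K\"ahler fibration; Theorem~\ref{th:main} supplies the required semi-positivity of $(\cL,h)$.

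For (ii), use Berndtsson's finer curvature formula: the curvature of the $L^2$-metric on $f_*(\cK_{\cX/S}\otimes\cL)$ decomposes as the fiber integral of $c_1(\cL,h)$ paired with the sections, plus a second non-negative contribution built from the horizontal lifts $v=\pt_s+a^\alpha_s\pt_\alpha$ of Lemma~\ref{le:canlift} and hence involving the harmonic \ks representatives $A_s$. Under effective parameterization one has $A_s\not\equiv 0$, making the second contribution strictly positive, while Theorem~\ref{th:main} makes the first summand strictly positive as well, and the sum is strictly Nakano positive. The main obstacle is ensuring that both contributions combine into a \emph{Nakano} (not merely Griffiths) positive form; this is precisely what Berndtsson's decomposition of the $L^2$-curvature in terms of horizontal lifts is designed to deliver.
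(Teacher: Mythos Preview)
Your proposal is correct and follows exactly the route the paper takes: the paper's proof is the single sentence ``The proof is an immediate consequence of our main theorem and a theorem of Berndtsson \cite{berndtsson},'' and you have simply spelled out the identification $\cK_{\cX/S}^{\otimes 2}=\cK_{\cX/S}\otimes\cL$ with $\cL=(\cK_{\cX/S},h)$ and the way Theorem~\ref{th:main} feeds the required (semi-)positivity into Berndtsson's direct-image theorem. For part~(ii) you can streamline: once Theorem~\ref{th:main} gives that $\omega_\cX$ is a genuine \ka form on $\cX$, Berndtsson's strict Nakano positivity applies directly, so the two-term decomposition of the $L^2$-curvature is not needed.
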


The proof is an immediate consequence of our main theorem and a theorem of
Berndtsson \cite{berndtsson}.

\subsection{The classical \wp metric on \tei space}
It is known from the results of Wolpert that the classical \wp metric for
families of Riemann surfaces of genus larger than one has negative
curvature: According to \cite{wo} the sectional curvature is negative, and
the holomorphic sectional curvature is bounded from above by a negative
constant. A stronger curvature property, which is related to strong
rigidity, was shown in \cite{sch:teich}. The strongest result on curvature
by Liu, Sun, and Yau now follows immediately from Theorem~\ref{th:main}.
\begin{corollary}[\cite{lsy}]
The \wp metric on the \tei space of Riemann surfaces of genus $p>1$ is
dual Nakano negative.
\end{corollary}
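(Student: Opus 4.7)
The plan is to obtain this as an immediate consequence of Theorem~\ref{th:appl}(ii) once one identifies the dual of the tangent bundle on Teichm\"uller space with an appropriate direct image. Let $f:\cX\to\cT_p$ denote the universal curve over the Teichm\"uller space of compact Riemann surfaces of genus $p>1$. I would first observe that the \ks map
$$
\rho_s:T_s\cT_p\longrightarrow H^1(X_s,\cT_{X_s})
$$
is an isomorphism at every $s\in\cT_p$ (this is, in essence, the defining property of $\cT_p$ as a fine moduli space), so that the universal family is effectively parameterized everywhere. Thus the hypothesis of Theorem~\ref{th:appl}(ii) is satisfied.

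Next I would use Serre duality on each fiber, $H^1(X_s,\cT_{X_s})^{*}\cong H^0(X_s,K_{X_s}^{\otimes 2})$, to obtain a canonical isomorphism of holomorphic vector bundles
$$
T^{*}\cT_p\;\cong\;f_{*}\cK_{\cX/\cT_p}^{\otimes 2}.
$$
The classical \wp metric on $T\cT_p$ is by definition the $L^2$ inner product on harmonic Beltrami differentials $A_{s\ol\beta}^{\alpha}\pt_\alpha dz^{\ol\beta}$ taken with respect to the hyperbolic (that is, \ke) metric on the fiber. Under the duality above, the dual metric on $T^{*}\cT_p$ is precisely the natural $L^2$ metric on holomorphic quadratic differentials $\phi=\phi_{\alpha\beta}dz^\alpha dz^\beta$ induced by the hyperbolic metric, namely $\|\phi\|^2=\int_{X_s}\phi_{\alpha\beta}\ol{\phi_{\gamma\delta}}\,g^{\ol\delta\alpha}g^{\ol\beta\gamma}\,g\,dV$. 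This is exactly the $L^2$ metric that Theorem~\ref{th:appl} places on $f_{*}\cK_{\cX/\cT_p}^{\otimes 2}$.

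Applying Theorem~\ref{th:appl}(ii) one concludes that $(f_{*}\cK_{\cX/\cT_p}^{\otimes 2},L^2)$ is Nakano positive, which by the metric identification above means that $(T^{*}\cT_p,\omega^{WP,*})$ is Nakano positive, i.e.\ the \wp metric on $T\cT_p$ is dual Nakano negative. The only subtle point is the verification that the dual of the \wp metric corresponds precisely, without stray curvature contributions, to the $L^2$ metric on $f_*\cK_{\cX/\cT_p}^{\otimes 2}$ that enters Theorem~\ref{th:appl}; this is a routine check using the explicit pairing $\langle A,\phi\rangle=\int_X A_{\ol\beta}^\alpha \phi_{\alpha\gamma}g^{\ol\beta\gamma}g\,dV$ between harmonic Beltrami differentials and holomorphic quadratic differentials. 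All of the analytic difficulty is absorbed into Theorem~\ref{th:main} and Berndtsson's theorem, both of which are invoked by Theorem~\ref{th:appl}; in this corollary one merely has to run the universal family over $\cT_p$ through that machinery.
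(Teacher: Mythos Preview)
Your proposal is correct and follows essentially the same route as the paper: identify $T^*\cT_p$ with $f_*\cK_{\cX/\cT_p}^{\otimes 2}$ via Serre duality, observe that the dual of the Weil--Petersson metric is the $L^2$ metric on holomorphic quadratic differentials, and then invoke Theorem~\ref{th:appl}(ii). The paper states this in one sentence, while you have usefully spelled out the effectiveness of the universal family and the explicit pairing; there is no substantive difference in strategy.
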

\begin{proof}
Observe that for a universal family $f:\cX\to S$ the classical \wp metric
on $R^1f_* \cT_{\cX/S}$ corresponds to the $L^2$ metric on its dual bundle
$f_*(\cK_{\cX/S}^{\otimes2})$, which is Nakano positive according to
Theorem~\ref{th:appl}.
\end{proof}

\subsection{Curvature of the generalized \wp metric and related metrics}
In this section we present a new approach to questions related
hyperbolicity properties of moduli spaces and the existence of
non-isotrivial families in the sense of the hyperbolicity conjecture of
Shafarevich. We include immediate corollaries to our main theorem whiich
are closely related to known cases (\ e.g.\ \cite{b-v, keko, kv1, kv2, m,
v-z, v-z2}).

We pick up the notations from Section~1 (in case of a smooth base space
$S$ of arbitrary dimension. Let $f:\cX \to S$ be a smooth, proper
holomorphic map, whose fibers $\cX_s$, $s\in S$ are canonically polarized
varieties of dimension $n$, equipped with \ke metrics of constant Ricci
curvature equal to one. Let
$$
\rho_{s_0} : T_{s_0}S \to H^1(\cX_{s_0},\cT_{\cX_{s_0}})
$$
be the \ks map for a point $s\in S$. The induced $L^2$-metric on the space
of infinitesimal deformations is given by integration of the harmonic
representatives of the \ks classes of tangent vectors. These were
discussed in Section~\ref{se:posi}.

Explicitly the \wp hermitian inner product is defined as follows: Let
$(s^1,\ldots,s^k)$ be local holomorphic coordinates on $S$ such that the
given base point corresponds to the origin, and let
$(z,s)=(z^1,\ldots,z^n,s^1,\ldots,s^k)$ be local holomorphic coordinates
on $\cX$ with\\ $f(z,s)=s$.

Let
$$
\rho_s\left(\frac{\partial}{\partial s_i}\right) =
[A_{i\ol \beta}^\alpha \frac{\partial}{\partial z^\alpha} dz^\ol\beta] \in
 H^1(\cX_{s_0}, \cT_{\cX_{s_0}})
$$
with harmonic representative $A_{i\ol\beta}^\alpha$. Then (with the above
notations for the \ka manifold $X=\cX_{s_0}$
\begin{eqnarray}
   A^\alpha_{i\ol\beta;\ol\delta}&=&A^\alpha_{i\ol\delta;\ol\beta}\label{eq:clsd}\\
   0&=&g^{\ol\delta\gamma} A^\alpha_{i\ol\delta;\gamma}\label{eq:harm}\\
  A_{i\ol\beta\ol\delta}&=&A_{i\ol\delta\ol\beta}\label{eq:symm}.
\end{eqnarray}
The above equation \eqref{eq:clsd} is the $\db$-closedness,
\eqref{eq:harm} the harmonicity, and \eqref{eq:symm} reflects the close
relationship with the metric tensor.

\begin{theorem}\label{th:fins}
Any  compact subspace or relatively compact subset of the moduli space of
canonically polarized complex surfaces possesses a complex Finsler
orbifold metric, whose holomorphic curvature is bounded by a negative
constant.
\end{theorem}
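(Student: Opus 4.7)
The plan is to construct a complex Finsler metric on the moduli stack by taking the fiberwise maximum of the function $\varphi$ built from the canonical lift, and to bound its holomorphic sectional curvature using the elliptic equation \eqref{eq:phiA} together with the maximum principle. Given an effectively parameterized smooth family $f:\cX\to S$ and a tangent vector $v\in T_sS$, let $v^{\mathrm{can}}=\partial/\partial s+a_s^\alpha\partial_\alpha$ be its canonical lift (Lemma~\ref{le:canlift}) and set
$$
\varphi_v(z,s):=\langle v^{\mathrm{can}},v^{\mathrm{can}}\rangle_{\omega_\cX}(z,s),\qquad F(v)^2:=\max_{z\in\cX_s}\varphi_v(z,s).
$$
By Theorem~\ref{th:main}, $F(v)>0$ whenever $v$ represents a nonzero infinitesimal deformation; the homogeneity $F(\lambda v)=|\lambda|F(v)$ follows from the $\C$-linear dependence of $v^{\mathrm{can}}$ on $v$; and continuity of $F$ follows from the real-analyticity of the \ke metrics in holomorphic parameters (Kazdan--De\,Turck) together with compactness of the fibers. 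Because the construction is functorial under base change, $F$ descends to the moduli stack as an orbifold Finsler metric.

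The second step is to reduce the curvature bound to a pointwise estimate on $\cX$. Given a holomorphic disk $\phi:\Delta\to S$ with $\phi(0)=s_0$ and $\phi'(0)=v$, set $\Phi(\tau)=F(\phi'(\tau))^2$, and choose a point $z_0\in\cX_{s_0}$ at which the defining maximum is attained. Then $\log\Phi(\tau)\geq\log\varphi_{\phi'(\tau)}(z_0,\phi(\tau))$ with equality at $\tau=0$, so
$$
\partial_\tau\partial_{\bar\tau}\log\Phi(\tau)\big|_{\tau=0}\;\geq\;\partial_\tau\partial_{\bar\tau}\log\varphi_{\phi'(\tau)}(z_0,\phi(\tau))\big|_{\tau=0}.
$$
Since the Gaussian curvature of the induced metric $\Phi(\tau)|d\tau|^2$ on $\Delta$ equals $-\partial_\tau\partial_{\bar\tau}\log\Phi/\Phi$, the theorem will follow if we produce a constant $c>0$, uniform for $s_0$ in the prescribed compact subset of moduli, such that at the maximum point $z_0$
$$
\partial_\tau\partial_{\bar\tau}\log\varphi_{\phi'(\tau)}(z_0,\phi(\tau))\big|_{\tau=0}\;\geq\;c\,\varphi_v(z_0,s_0).
$$
Uniformity in $s_0$ is automatic from the real-analytic dependence and the compactness assumption, so the content is the pointwise inequality.

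The third and central step is the pointwise inequality, obtained by differentiating the elliptic equation \eqref{eq:phiA} twice in base direction and exploiting the fact that $z_0$ is a fiber maximum of $\varphi_v$. At such a point $\Box_{g,s}\varphi_v(z_0)\geq0$, so \eqref{eq:phiA} gives $\varphi_v(z_0,s_0)\leq\|A_v(z_0,s_0)\|^2$. Taking $\partial_\tau\partial_{\bar\tau}$ of \eqref{eq:phiA} at $z_0$, one expresses the required lower bound as a combination of $L^2$--type norms of $A_v$, covariant derivatives $A_{v;\gamma}$, $A_{v;\bar\delta}$, and contractions with the Riemann curvature tensor $R^\alpha{}_{\beta\gamma\bar\delta}$ of the K\"ahler--Einstein fiber, following the same pattern as in Proposition~\ref{pr:elleq}. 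The restriction to fiber dimension $n=2$ enters at this algebraic stage: for \ke complex surfaces with $\mathrm{Ric}(\omega_X)=-\omega_X$, the algebraic Bianchi identity and the low rank of the curvature tensor force the cross terms to combine with the right sign, so that, after substituting $\varphi_v\leq\|A_v\|^2$ at $z_0$, one obtains a lower bound proportional to $\varphi_v(z_0,s_0)$.

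The main obstacle is precisely this algebraic/computational step in dimension two: controlling the curvature cross terms produced by differentiating \eqref{eq:phiA} twice in the base and packaging them as a positive multiple of $\varphi_v(z_0,s_0)$. For arbitrary $n$ the corresponding inequality fails in general, which is why the statement is restricted to complex surfaces; all other ingredients (definition of $F$, maximum principle reduction, uniformity via real-analyticity, descent to the orbifold) work in any fiber dimension.
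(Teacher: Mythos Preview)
Your approach is genuinely different from the paper's, and the central step you flag as the ``main obstacle'' is not just a routine computation left to the reader---it is a real gap, and there is good reason to doubt it can be filled as you describe.

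The paper does \emph{not} work with the pointwise maximum of $\varphi$. Instead it uses the Weil--Petersson metric $G$ (an $L^2$ construction) together with the explicit curvature formula of Theorem~\ref{th:wpcurv}. That formula shows the holomorphic sectional curvature of $G$ is bounded above by $(-2\|A\|_{WP}^4 + \|H(A\wedge A)\|^2)/\mathrm{vol}$, which is \emph{not} uniformly negative because of the term $\|H(A\wedge A)\|^2$. The role of fiber dimension two is that $\Lambda^2\cT_X\simeq K_X^{-1}$, so $H(A\wedge A)$ lies in a bundle dual to $f_*\cK_{\cX/S}^{\otimes 2}$, which is Nakano-positive by Theorem~\ref{th:appl} (Berndtsson). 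The map $\mu:S^2T_S\to R^2f_*\Lambda^2\cT_{\cX/S}$ then yields a second Finsler (semi-)metric $H$ of non-positive holomorphic curvature, and the paper builds the desired metric as a convex sum $G+\gamma H$ using Lemma~\ref{le:convsum}.

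Your pointwise strategy would require $\partial_\tau\partial_{\bar\tau}\log\varphi\big|_{z_0}\geq c\,\varphi(z_0)$, but the integrated analogue already fails: the Weil--Petersson curvature (the fiber integral of $\varphi$) is not negatively bounded without the correction by $H$, so there is no reason to expect the pointwise inequality. Your account of why $n=2$ helps---Bianchi identities and ``low rank'' of the curvature tensor forcing cross terms to have the right sign---does not match the actual mechanism, which is cohomological (Serre duality plus Nakano positivity of a direct image), not a pointwise algebraic identity on $\cX_s$. A secondary issue is that $F(v)^2=\max_z\varphi_v$ is only Lipschitz in general, so the ``touching from below'' argument needs a viscosity-type interpretation; this is manageable but should be stated. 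The substantive problem, however, is that you have not produced---and the paper's evidence suggests one cannot produce---the claimed pointwise lower bound without an auxiliary metric absorbing the $H(A\wedge A)$ contribution.
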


In particular, the theorem implies that there exist no non-isotrivial
holomorphic families of canonically polarized complex surfaces over the
projective line or an elliptic curve.

We will use the fact that the holomorphic curvature of a Finsler metric at
a certain point $p$ in the direction of a tangent vector $v$ is the
supremum of the curvatures of the pull-back of the given Finsler metric to
a holomorphic disk through $p$ and tangent to $v$ (cf.\
\cite{abate-patrizio}). (For a hermitian metric, the holomorphic curvature
is known to be equal to the holomorphic sectional curvature).

These facts readily generalize to metrics of orbifold type.

The construction of the Finsler metric is by modifying the generalized \wp
metric. We recall the formula for its curvature denoting by $\Box$ the
complex Laplacian on functions and tensors resp. The functions $A_i\cdot
A_\ol\jmath$ are pointwise inner products of \ks tensors, whereas
$A_i\wedge A_\ol\jmath$ denotes a $(0,2)$-form with values in
$\Lambda^2\cT_{\cX_s}$ (cf. \cite{sch:curv}).

\begin{theorem}[\cite{sch:curv}]\label{th:wpcurv}
Let $f:\cX \to S$ be a local universal family of canonically polarized
manifolds with smooth base space $S$. Then the curvature tensor of the
generalized \wp metric equals
\begin{eqnarray}\label{eq:curvwp}
R_{i\ol\jmath k\ol\ell}^{\rm P W}(s)
&=&-\int_{\cX_s}(\Box+1)^{-1}(A_i\cdot A_\ol\jmath)(A_k\cdot A_\ol\ell)g\,d V\cr
&&-\int_{\cX_s}(\Box+1)^{-1}(A_i\cdot A_\ol\ell)(A_k\cdot A_\ol\jmath)g\,d V\cr
&&-\int_{\cX_s}(\Box -1)^{-1}(A_i\wedge A_k)\cdot(A_\ol\jmath\wedge A_\ol\ell)g\,d V.
\end{eqnarray}
\end{theorem}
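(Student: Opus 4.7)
The plan is to compute the covariant Hessian of the metric tensor $G^{WP}_{i\ol\jmath}(s)$ at a chosen base point $s_0$. Working in local holomorphic coordinates $(s^1,\ldots,s^k)$ on $S$ adapted to $s_0$ so that $G^{WP}_{i\ol\jmath}(s_0)=\delta_{ij}$ and all first derivatives $\partial_k G^{WP}_{i\ol\jmath}(s_0)$ vanish, the curvature reduces to
\[
R^{WP}_{i\ol\jmath k\ol\ell}(s_0) = -\pt_k\pt_\ol\ell G^{WP}_{i\ol\jmath}(s_0).
\]
So the whole task is to evaluate this second derivative in a form matching \eqref{eq:curvwp}.

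First I would extend Proposition~\ref{pr:elleq} from one-parameter to multi-parameter families. For canonical lifts $v_i=\pt_i+a_i^\alpha\pt_\alpha$ of the base vectors, define $\varphi_{i\ol\jmath}=\langle v_i,v_\ol\jmath\rangle_{\omega_\cX} = g_{i\ol\jmath}-g^{\ol\beta\alpha}g_{i\ol\beta}g_{\alpha\ol\jmath}$. Verbatim repetition of the proof of Proposition~\ref{pr:elleq} (polarizing in the $i,\ol\jmath$ indices) gives the fiberwise elliptic identity
\[
(\Box+\mathrm{id})\varphi_{i\ol\jmath} = A_i\cdot A_\ol\jmath.
\]
Integration of this identity over $\cX_s$ together with the Lemma relating $\omega_\cX^{n+1}$ and $\varphi$ yields $G^{WP}_{i\ol\jmath}(s)=\int_{\cX_s}\varphi_{i\ol\jmath}(z,s)\,\omega_{\cX_s}^n/n!$, which is the starting point for differentiation in $s$.

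Next I would differentiate under the integral using the canonical lifts as differentiation of fiber-valued tensors, since the canonical lifts induce the harmonic representatives (Proposition~\ref{pr:harmrep}) and hence give the cleanest commutation with $\ol\pt$ on fibers. After applying $\pt_k$ and $\pt_\ol\ell$, the integrand breaks into three tensorial contributions: (a) terms of the form $(\pt_\ol\ell\varphi_{i\ol\jmath;k}+\pt_k\varphi_{i\ol\jmath;\ol\ell})$ that, after integration by parts on the fiber and use of the elliptic identity above, give the $(\Box+\mathrm{id})^{-1}(A_i\cdot A_\ol\jmath)(A_k\cdot A_\ol\ell)$ term and its partner obtained by interchanging $\ol\jmath\leftrightarrow\ol\ell$; (b) a purely antiholomorphic-symmetric piece in which the tensors $A_i$ and $A_k$ appear combined as a bivector-valued $(0,2)$-form $A_i\wedge A_k$; (c) commutator terms between $v_k$ and $v_\ol\ell$ whose pointwise contribution vanishes at $s_0$ after using the vanishing of first derivatives of $G^{WP}$, i.e.\ the orthonormality gauge.

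The central technical step, and the main obstacle, is to identify the operator governing piece (b). Acting on $\Lambda^2\cT_{\cX_s}$-valued $(0,2)$-forms, the rough Laplacian differs from $\Box$ by curvature terms, and the Kähler-Einstein condition $R_{\ab}=-\gab$ applied twice (once for each tangent factor, and once for the form degree) produces exactly the shift $-1$ rather than $+1$, so that the relevant Green's operator is $(\Box-\mathrm{id})^{-1}$. Writing out the Weitzenböck/Bochner formula on this bundle, using the harmonicity and symmetry relations \eqref{eq:clsd}--\eqref{eq:symm} that are already recorded just before the theorem, one sees that the scalar fiber integral produced by piece (b) is precisely $\int_{\cX_s}(\Box-1)^{-1}(A_i\wedge A_k)\cdot(A_\ol\jmath\wedge A_\ol\ell)\,g\,dV$. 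Careful sign tracking, carried through by noting that every raw second $s$-derivative of $\varphi_{i\ol\jmath}$ is eliminated in favor of a Green's operator applied to a pointwise quadratic in the harmonic $A$'s, yields all three terms with the correct minus signs, completing the formula.
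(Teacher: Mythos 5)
The paper does not prove Theorem~\ref{th:wpcurv} at all: it is quoted from the earlier reference \cite{sch:curv}, so there is no internal argument to compare yours with, and your sketch has to stand on its own. In outline you have reconstructed the right strategy (the Siu--Schumacher computation): reduce to the second derivative of $G^{WP}_{i\ol\jmath}=\int\varphi_{i\ol\jmath}\,g\,dV$ via the polarized elliptic identity $(\Box+1)\varphi_{i\ol\jmath}=A_i\cdot A_{\ol\jmath}$, differentiate along horizontal lifts, and sort the result into a scalar piece governed by $(\Box+1)^{-1}$ and a $\Lambda^2\cT$-valued piece governed by $(\Box-1)^{-1}$. That much is faithful to how the formula is actually obtained.

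However, as a proof the proposal has genuine gaps, all concentrated where the work actually is. First, the decomposition into your pieces (a)--(c) is asserted rather than derived: you never exhibit the mechanism by which the second $s$-derivatives produce the factor $A_k\cdot A_{\ol\ell}$ multiplying $(\Box+1)^{-1}(A_i\cdot A_{\ol\jmath})$ under the fiber integral (this requires controlling the variation of the harmonic representatives and of the volume element $g\,dV$, not just the elliptic identity). Second, your piece (c) is wrong as stated: the commutator $[v_k,v_{\ol\ell}]$ of horizontal lifts is a vertical field reflecting the pointwise geometry of the fibration, and choosing normal coordinates for $G^{WP}$ on $S$ only kills first derivatives of the \emph{integrated} metric; it does not make these pointwise contributions vanish, and in the actual computation they feed into the first two terms. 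Third, the identification of the shift $-1$ is a heuristic ("KE applied twice"), not a Bochner--Kodaira computation, and you do not address why $(\Box-1)^{-1}$ is even well defined on the component containing $A_i\wedge A_k$ --- one needs that $1$ is not an eigenvalue there and a separate treatment of the harmonic part (this is exactly the spectral fact invoked later in the proof of Corollary~\ref{cor:curv}). Phrases such as "careful sign tracking yields all three terms" are placeholders for the entire content of the theorem; to make this a proof you would have to carry out the differentiation of the harmonic projections explicitly, as is done in \cite{sch:curv}.
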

For any harmonic \ks tensor $A=A^\alpha_{\ol\beta}\frac{\partial}{\partial
z^\alpha}dz^\ol\beta$ we denote by $H(A\wedge A)$ the harmonic part of
$A\wedge A$.

The theorem implies the following estimate:

\begin{corollary}\label{cor:curv}
Let $A=\xi^iA_i$. Then
\begin{equation}\label{eq:corcurv}
R_{i\ol\jmath k\ol\ell}^{\rm P W}(s)\xi^i\xi^\ol\jmath\xi^k\xi^\ol\ell\leq
(-2\|A\|^4_{WP}+ \|H(A\wedge A)\|^2)/{\rm vol}(\cX_s).
\end{equation}
\end{corollary}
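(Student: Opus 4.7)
\medskip
\noindent\textbf{Proof proposal.}
The plan is to substitute $A=\xi^iA_i$ into the curvature formula \eqref{eq:curvwp} of Theorem~\ref{th:wpcurv} and estimate each of the three resulting integrals separately by spectral decomposition of the operators $\Box\pm 1$. After the contraction with $\xi^i\xi^\ol\jmath\xi^k\xi^\ol\ell$ the first two summands both reduce to $-\int_{\cX_s}(\Box+1)^{-1}(A\cdot\ol A)\cdot(A\cdot\ol A)\,g\,dV$, and the third one becomes $-\int_{\cX_s}(\Box-1)^{-1}(A\wedge A)\cdot\overline{(A\wedge A)}\,g\,dV$. It is therefore enough to bound each of these integrals from below.

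For the $(\Box+1)^{-1}$-term I would exploit that $\Box+1$ is positive selfadjoint, its smallest eigenvalue being $1$ and attained precisely on constants. Writing the nonnegative function $u:=A\cdot\ol A$ as $u=\ol u+u'$, where $\ol u=\|A\|_{WP}^{2}/{\rm vol}(\cX_s)$ is the fibrewise mean and $u'$ is $L^2$-orthogonal to constants, one has $(\Box+1)^{-1}\ol u=\ol u$ and $\int_{\cX_s}u'\,(\Box+1)^{-1}u'\,g\,dV\geq 0$ because $(\Box+1)^{-1}$ is a positive operator on the orthogonal complement of constants. This gives
$$
\int_{\cX_s}u\,(\Box+1)^{-1}u\,g\,dV\;\geq\;\ol u^{\,2}\,{\rm vol}(\cX_s)\;=\;\|A\|_{WP}^{4}/{\rm vol}(\cX_s),
$$
so that the two $(\Box+1)^{-1}$-summands together contribute at most $-2\|A\|_{WP}^{4}/{\rm vol}(\cX_s)$.

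For the $(\Box-1)^{-1}$-term I set $B:=A\wedge A$ and decompose $L^2$-orthogonally $B=H(B)+B^{\perp}$ into the harmonic part and its complement. Since $\Box H(B)=0$, we have $(\Box-1)^{-1}H(B)=-H(B)$; on the orthogonal complement of the harmonics the spectrum of $\Box$ lies strictly above $1$ on the tensor type of $A\wedge A$, so $(\Box-1)^{-1}$ is a positive operator on $B^{\perp}$. By orthogonality of $H(B)$ and $B^{\perp}$ this yields
$$
-\int_{\cX_s}(\Box-1)^{-1}B\cdot\ol B\,g\,dV\;=\;\|H(B)\|^{2}-\int_{\cX_s}(\Box-1)^{-1}B^{\perp}\cdot\overline{B^{\perp}}\,g\,dV\;\leq\;\|H(A\wedge A)\|^{2}.
$$
Adding the two estimates produces the asserted bound \eqref{eq:corcurv}.

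The only non-routine ingredient is the spectral claim $\Box>1$ on the non-harmonic part of the $T_{\cX_s}$-valued $(0,2)$-tensors of the symmetry type occupied by $A\wedge A$; this is a Bochner/Weitzenb\"ock consequence of the \ke normalization ${\rm Ric}=-\omega_{\cX_s}$, and it is precisely what makes $(\Box-1)^{-1}$ in Theorem~\ref{th:wpcurv} well-defined. Once it is granted, the argument is simply the observation that, in each of the three integrals, the extremal contribution to the negative quadratic form comes from the kernel of the respective shifted Laplacian---the constants for $\Box+1$ and the harmonic projection $H(A\wedge A)$ for $\Box-1$---and these two contributions give the two named terms on the right-hand side of the corollary.
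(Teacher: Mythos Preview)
Your argument is correct and follows the same route as the paper's own proof: spectral (eigenspace) decomposition of $A\cdot\ol A$ and of $A\wedge A$, with the extremal contribution coming from the constant part resp.\ the harmonic part. One refinement worth noting: the paper does not assert a general spectral gap $\Box>1$ on the whole non-harmonic subspace of that tensor type, but only the weaker (and sufficient) fact---established in \cite{sch:curv}---that the particular tensor $A\wedge A$, for harmonic $A$, has no eigencomponents with $\lambda\in(0,1]$; this is what you should cite rather than a blanket Bochner claim.
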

\begin{proof}
We apply the eigenspace decompositions of the function $A\cdot\ol A$ and
the tensor $A\wedge A$ with respect to the Laplacians. It was shown in
\cite{sch:curv} that the eigenspace decomposition of $A\wedge A$ contains
no contributions for eigenvalues $\lambda\in (0,1]$.
\end{proof}

Now we denote by $G$ the Finsler metric induced by $G^{WP}_{i\ol \jmath}$.
It is known that the holomorphic curvature of $G$ is equal to the
holomorphic sectional curvature of $G^{WP}_{i\ol \jmath}$ (cf.\
\cite{abate-patrizio}).

From now on, we assume that the fibers are of {\it complex dimension two}.
The locally free sheaf $R^2f_*\Lambda^2  \cT_{\cX/S}$ is dual to
$f_*\cK^{\otimes2}_{\cX/S}$. The latter, equipped with the induced
$L^2$-inner product, is Nakano-positive according to Theorem~\ref{th:appl}
for any effectively parameterized family $f:\cX \to S$. However, at this
point, we cannot give any estimate for the curvature because
Theorem~\ref{th:main} does not contain any estimates.

We consider the natural morphism
\begin{equation}
\mu:S^2 T_S \to R^2\!f_*\Lambda^2\cT_{\cX/S}.
\end{equation}
In general, we can only say that it induces a Finsler semi-metric on $S$.
If the semi-metric is not identically zero but vanishes only on a thin
analytic subset, it is of non-positive holomorphic curvature (considering
that the holomorphic curvature of a Finsler metric is defined in terms of
holomorphic curves).

We need the following fact. Let $C$ be a holomorphic curve and
$G=G(z)dz\ol{dz}$ a hermitian semi-metric, which is positive on the
complement of a discrete subset say. Denote by
$$
K_G:= - \left.\frac{\pt^2\log G(z)}{\pt z \ol{\pt z}}\!\!\right/\!\! G(z)
$$
the (Ricci) curvature. Let  $H=H(z)dz\ol{dz}$ be a further such metric.
\begin{lemma}[cf.\ {\cite[Lemma~3]{sch:framas}}]\label{le:convsum}
\begin{equation}\label{eq:curvest}
K_{G+H} \leq \frac{G^2}{(G+H)^2} K_G +  \frac{H^2}{(G+H)^2} K_H.
\end{equation}
\end{lemma}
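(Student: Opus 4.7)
The claim is local and pointwise, so we may work in a single coordinate chart with one complex variable $z$ and treat $G$ and $H$ as positive real functions on the open set where both are nonzero (extending by continuity to the discrete zero locus). The plan is to reduce the stated inequality to an elementary pointwise estimate by a direct computation of $\partial\bar\partial\log(G+H)$.

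First I would rewrite the claim. Since $G K_G = -\partial^2\log G/\partial z\partial\bar z$ and similarly for $H$ and $G+H$, multiplying the target inequality through by $(G+H)^2$ and clearing signs transforms it into the equivalent estimate
\begin{equation*}
(G+H)\,\frac{\partial^2 \log(G+H)}{\partial z\,\partial\bar z} \;\geq\; G\,\frac{\partial^2 \log G}{\partial z\,\partial \bar z} + H\,\frac{\partial^2 \log H}{\partial z\,\partial \bar z}.
\end{equation*}
This reformulation has the advantage of involving only the unweighted logarithmic Laplacians of the three metrics.

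Next I would compute the left-hand side. Using the identity $\partial\bar\partial \log f = (f_{z\bar z})/f - |f_z|^2/f^2$ for $f=G+H$, expressing $(G+H)_{z\bar z}=G_{z\bar z}+H_{z\bar z}$, and substituting $G_{z\bar z}=G\,\partial^2\log G/\partial z\partial\bar z + |G_z|^2/G$ (and analogously for $H$), a short manipulation gives
\begin{equation*}
(G+H)\,\frac{\partial^2 \log(G+H)}{\partial z\,\partial \bar z} = G\,\frac{\partial^2 \log G}{\partial z\,\partial \bar z} + H\,\frac{\partial^2 \log H}{\partial z\,\partial \bar z} + \frac{|G_z|^2}{G} + \frac{|H_z|^2}{H} - \frac{|G_z+H_z|^2}{G+H}.
\end{equation*}
Thus the needed inequality reduces to showing that the last three terms together are non-negative.

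The main (and only) step left is the algebraic estimate
\begin{equation*}
\frac{|G_z|^2}{G} + \frac{|H_z|^2}{H} \;\geq\; \frac{|G_z+H_z|^2}{G+H}.
\end{equation*}
Clearing denominators by multiplying through by $GH(G+H)$ and expanding, everything cancels except for the term $|H G_z - G H_z|^2 \geq 0$, which is manifestly non-negative. This is a Cauchy--Schwarz type identity (equivalently, the convexity of $(x,y)\mapsto |y|^2/x$ on $\mathbb R_{>0}\times\mathbb C$). I do not expect any serious obstacle: once the rearrangement in the first paragraph is in hand, the entire argument is bookkeeping, with the Cauchy--Schwarz step as the only substantive ingredient. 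The inequality extends continuously across the discrete zero locus of $GH$, so the statement holds on all of $C$.
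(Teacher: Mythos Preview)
Your argument is correct. The rearrangement to the inequality
\[
(G+H)\,\partial_z\partial_{\bar z}\log(G+H)\ \geq\ G\,\partial_z\partial_{\bar z}\log G + H\,\partial_z\partial_{\bar z}\log H
\]
is exactly the right move, the identity you state for $(G+H)\,\partial_z\partial_{\bar z}\log(G+H)$ is easily verified from $(\log f)_{z\bar z}=f_{z\bar z}/f-|f_z|^2/f^2$, and the residual term is precisely $|HG_z-GH_z|^2/\bigl(GH(G+H)\bigr)\geq 0$, as you say.

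As for comparison with the paper: the paper does not actually supply a proof of this lemma; it simply records the inequality and refers to \cite[Lemma~3]{sch:framas}. Your computation is the standard one-line proof of that cited result, so there is nothing to contrast. One minor remark: your last sentence about extending across the discrete zero locus of $GH$ is a bit glib, since $K_G$ or $K_H$ may well blow up there; but the inequality is only asserted (and only used) at points where the metrics are positive, so this is not an issue for the lemma as stated.
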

Observe that for $H\equiv 0$ the equation \eqref{eq:curvest} formally
still holds.

Let again $f:\cX \to S$ be a local, universal family of canonically
polarized manifolds. Then the \wp metric determines a Finsler metric $G$
on $S$, and the dual Nakano negative bundle $R^2f_*\Lambda^2\cT_{\cX/S}$
determines a Hermitian semi-metric $H\not \equiv 0$ for every curve $C$
with $\mu|_C$ not identically zero, since the map $\mu$ restricted to $C$
maps $\cT_C^{\otimes 2}$ to the hermitian bundle
$R^2f_*\Lambda^2\cT_{\cX/S}|C$ (compatible with base change).

Now, we can use Corollary~\ref{cor:curv} and Lemma~\ref{le:convsum} (under
the assumption on the base space in Theorem~\ref{th:fins}) to construct
the desired Finsler orbifold metric from a convex sum $G+\gamma H$ of $G$
and $H$, whose curvature is bounded by a negative constant from above.

The non-existence of non-isotrivial holomorphic families of canonically
polarized surfaces over compact curves $C$ of genus zero or one can be
seen directly from Theorem~\ref{th:wpcurv} and Theorem~\ref{th:main}. If
the map $\mu$ on $C$ is identically zero, then the curvature formula for
the \wp metric \eqref{eq:curvwp} and the estimate \eqref{eq:corcurv} imply
the claim, if not, Theorem~\ref{th:appl} can be applied directly.

\begin{corollary}[]\label{th:arty}
Let $C$ be a smooth, compact curve and $f:\cX \to C$ a non-isotrivial
family of canonically polarized surfaces. Then $g(C)>1$ or there exists at
least one singular fiber.
\end{corollary}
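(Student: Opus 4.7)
My plan is to fix a smooth (all fibers non-singular) non-isotrivial holomorphic family $f : \cX \to C$ of canonically polarized surfaces over a smooth compact curve $C$ and show $g(C) > 1$. Non-isotriviality forces the \ks map $\rho_s$ to be non-zero on a dense Zariski-open subset $C^\circ \subset C$. The strategy is to produce on $C$ a hermitian (semi-)metric whose holomorphic curvature is bounded above by a strictly negative constant on $C^\circ$, and then invoke an Ahlfors-Schwarz-Yau type argument to rule out $g(C) \leq 1$.

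Following the dichotomy outlined just before the corollary, I would consider the natural morphism $\mu : S^2 T_C \to R^2 f_* \Lambda^2 \cT_{\cX/C}$, which at $s \in C^\circ$ sends $\xi \otimes \xi$ to the harmonic projection $H(A \wedge A)$ of $A \wedge A$ with $A = \rho_s(\xi)$. If $\mu \equiv 0$ along $C$, then $H(A\wedge A)$ vanishes along $C^\circ$, so the last summand in Corollary~\ref{cor:curv} drops out and Theorem~\ref{th:wpcurv} gives
$$
R^{WP}_{1\ol 1 1 \ol 1}(s)\,|\xi|^4 \;\leq\; -\frac{2\,\|A\|_{WP}^4}{\mathrm{vol}(\cX_s)}.
$$
Since the fiber volumes are uniformly bounded (canonical polarization with fixed Hilbert polynomial), the holomorphic sectional curvature of $\omega^{WP}$ on $C^\circ$ is bounded above by a strictly negative constant. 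If instead $\mu \not\equiv 0$, then by Theorem~\ref{th:appl} the sheaf $f_* \cK_{\cX/C}^{\otimes 2}$ is Nakano positive on $C^\circ$, so its dual $R^2 f_* \Lambda^2 \cT_{\cX/C}$ is dual Nakano negative, and pulling back the induced $L^2$ hermitian metric via $\mu$ yields a non-trivial hermitian semi-metric $H$ on $T_C^{\otimes 2}$ with $K_H \leq 0$. Applying Lemma~\ref{le:convsum} to the convex sum $\omega^{WP} + \gamma H$ for $\gamma > 0$ small then produces on $C^\circ$ a hermitian metric whose holomorphic curvature is bounded above by a strictly negative constant.

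In either case $C$ carries a hermitian pseudo-metric of uniformly negative Gauss curvature on the dense open $C^\circ$. If $g(C) \leq 1$, the universal cover $\wt C$ is $\mathbb P^1$ or $\C$, and the classifying map composed with the covering yields a non-constant holomorphic map $\wt C \to C \to \cM$; the pullback pseudo-metric on $\wt C$ would then be non-trivial and have holomorphic curvature bounded above by a strictly negative constant on a non-empty open set, which is impossible for $\mathbb P^1$ or $\C$ by Ahlfors-Schwarz-Yau. Hence $g(C) > 1$. The main technical subtlety is handling the degeneracy of the pseudo-metric at the finitely many points of $C \setminus C^\circ$ where $\rho$ or $\mu$ vanishes, but the Ahlfors-Schwarz comparison is insensitive to isolated degeneracies of the target metric (one applies it on the preimage of $C^\circ$), so the contradiction persists and the corollary follows.
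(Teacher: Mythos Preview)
Your contrapositive set-up and the dichotomy on $\mu$ match the paper's strategy, and in the case $\mu\equiv 0$ your argument is the paper's argument.

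In the case $\mu\not\equiv 0$ the paper proceeds differently: rather than building a convex sum and running Ahlfors--Schwarz again, it applies the Gau\ss--Bonnet theorem to the singular metric $H$ induced by $\mu$ (invoking Theorem~\ref{th:singext} to extend the curvature current across possible singular fibers). Since the smooth part of the curvature of $H$ is non-positive, the degree of $T_C^{\otimes 2}$ forces either $g(C)>1$ or the presence of positive Lelong contributions from singular fibers. This buys the full disjunction in the statement without ever needing a uniform negative upper bound for the curvature, which the paper explicitly notes is unavailable from Theorem~\ref{th:main} alone.

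Your convex-sum step has a genuine gap. From Lemma~\ref{le:convsum} and Corollary~\ref{cor:curv} one only gets
\[
K_{G+\gamma H}\,(G+\gamma H)^2 \;\le\; G^2 K_G + \gamma H^2 K_H \;\le\; \frac{-2G^2+H^2}{\mathrm{vol}(\cX_s)} + \gamma H^2 K_H,
\]
and with the hypothesis $K_H\le 0$ that you stated, the right-hand side is still positive wherever $H^2>2G^2$. Taking $\gamma$ \emph{small} makes this worse, not better: to absorb the $+H^2$ term you need a strict bound $K_H\le -\varepsilon<0$ (which does follow from Theorem~\ref{th:appl}(ii) combined with compactness of $C$, but you did not invoke this), and then $\gamma$ must be chosen on the order of $1/(\varepsilon\cdot\mathrm{vol})$, not small. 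This is precisely what underlies the paper's Theorem~\ref{th:fins}, so your argument can be repaired by quoting that result or by inserting the missing compactness/strict-negativity step; as written, however, the crucial inequality is not established.
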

\begin{proof}
We apply Theorem~\ref{th:wpcurv} and Theorem~\ref{th:main}: Let $C'\subset
C$ be the set of points with regular fibers. Consider the case, where the
map $\mu$ on $C'\subset C$ is identically zero. Then the curvature formula
for the \wp metric \eqref{eq:curvwp} and the estimate \eqref{eq:corcurv}
imply the existence of a metric, whose curvature is bounded from above by
a negative constant. If $\mu\not\equiv0$ we apply the Gauß-Bonnet theorem
for singular metrics unsing Theorem~\ref{th:singext}. Since in this
situation, we do not have a negative upper estimate for the curvature, we
cannot bound the Lelong numbers at the singularities. So we can only infer
the existence of at least one singular fiber, if $g(C)\leq 1$.
\end{proof}

\end{document}